\patchcmd{\thebibliography}{\chapter*}{\section*}{}{}
\date{\today}
\author{\texorpdfstring{Jens Grygierek\footnotemark[1]}{Jens Grygierek}}
\title{Poisson and Gaussian fluctuations for the \texorpdfstring{$\bff$}{f}-vector of high-dimensional random simplicial complexes} 
\begin{document}
\renewcommand{\thefootnote}{\fnsymbol{footnote}}

\maketitle
	
\footnotetext[1]{Institute of Mathematics, Osnabr\"uck University, Germany. Email: jens.grygierek@uni-osnabrueck.de}
	
\begin{abstract}	
	\noindent We investigate the high-dimensional asymptotic distributional behavior of the $\bff$-vector of a random Vietoris-Rips complex, that is generated over a stationary Poisson point process in $[-\frac{1}{2}, \frac{1}{2}]^d$ as the space dimension and the intensity tend to infinity while the radius parameter tends to zero simultaneously.
	\bigskip
	\\
	\textbf{Keywords}. {Poisson limit theorem, central limit theorem, high dimensional random Vietoris-Rips complex, Poisson point process, second-order Poincar\'{e} inequality, stochastic geometry, phase transition}\\
	\textbf{MSC (2010)}. 60D05, 60F05, 55U10
\end{abstract}

\section{Introduction}\label{sec:intro}

The field of topological data analysis motivates the study of random simplicial complexes, especially random geometric complexes that are higher-dimensional generalizations of the well known random geometric graph.
Naturally, one builds a simplicial complex on data points to study features of the data using combinatorial or topological properties like the $\bff$-vector, that counts the number of $k$-dimensional simplices, the Betti-numbers or persistent homology.
For a recent introduction into the different opportunities in this research field, we refer to the survey article \cite{BobrowskiKahle2018}.

Let $\eta_d$ be a stationary Poisson point process in $W := [-\frac{1}{2},+\frac{1}{2}]^d \subset \RR^d$ with dimension dependent intensity $t_d \in (0,\infty)$, i.e. the intensity measure is given by $\mu_d = t_d \Lambda_d$, where $\Lambda_d$ denotes the $d$-dimensional Lebesgue measure.
We choose a dimension-dependent distance parameter $\delta_d \in (0,\tfrac{1}{4})$ with $\delta_d \rightarrow 0$ for $d \rightarrow \infty$.

The points of $\eta_d$ are taken as the vertices of the random Vietoris-Rips complex $\vietoris^{\infty}(\eta_d, \delta_d)$, that contains any $k$-dimensional simplex $\cbras*{x_0,\ldots,x_k} \subseteq {\eta_d}^{k+1}_{\neq}$, $k \in \NNN$, if and only if the pairwise uniform distances of its vertices are bounded by $\delta_d$, i.e.
\begin{align*}
	\cbras*{x_0,\ldots,x_k} \in \vietoris^{\infty}(\eta_d,\delta_d) :\Leftrightarrow \unorm{x_j - x_i} \leq \delta_d \quad \text{for all} \quad i,j \in \cbras*{0,\ldots,k}.
\end{align*}
The collection of all $1$-dimensional simplices coincides with the edges of the well known random geometric graph, where the points of $\eta_d$ are taken as the vertices and any two vertices are connected by an edge whenever their uniform distance is less than or equal to $\delta_d$, see \cite{Penrose2003} for more details.

To simplify our notation, we will mostly omit the index $d$ in the following. Nevertheless all conditions we impose on the parameter sequences $t := (t_d)_d$ and $\delta := (\delta_d)_d$ in the following have to be treated with respect to $d \rightarrow \infty$.

Let $F_k := F_k(\vietoris^{\infty}(\eta_d,\delta_d))$, $k \geq 1$, denote the number of $k$-simplices in the random Vietoris-Rips complex $\vietoris^{\infty}(\eta_d,\delta_d)$, that is the $U$-statistic of order $k+1$ given by
\begin{align*}
	F_k(\vietoris^{\infty}(\eta_d,\delta_d)) := \frac{1}{(k+1)!} \sum\limits_{(y_0,\ldots,y_k) \in \eta^{k+1}_{\neq}} \prod\limits_{i=0}^k \prod\limits_{j=i+1}^k \1\cbras*{\unorm*{y_j - y_i} \leq \delta},
\end{align*}
where $\unorm*{\cdot}$ denotes the uniform norm on $\RR^d$. 
Note that $F_k$ is the $k$-th component of the $\bff$-vector of $\vietoris^{\infty}(\eta_d,\delta_d)$, i.e.
\begin{align*}
	F_k = f_k(\vietoris^{\infty}(\eta_d,\delta_d)).
\end{align*}
Additionally, $F_k$ counts the complete sub-graphs with $k+1$ vertices in the random geometric graph with respect to the uniform distance $\dist_{\infty}(x,y) = \unorm*{x-y}$.

We investigate the asymptotic distributional behavior of $F_k$ as $\delta \rightarrow 0$ and the intensity as well as the space dimension $d$ tend to infinity simultaneously.

\subsection{Main results}

As a preparation for our limit theorems we show asymptotically sharp bounds for the expectation and the variance of our $k$-simplex counting functional:

\begin{lemma}\label{lem:expectation}
	For all $d \geq 1$ and $k \geq 1$ the expected number of $k$-simplices in the random Vietoris-Rips complex $\vietoris^{\infty}(\eta_d,\delta_d)$ is bounded by
	\begin{align*}
		\rbras*{1-2\delta}^d \frac{t\rbras*{t\delta^d}^k (k+1)^d}{(k+1)!} \leq \E*{F_k} \leq \frac{t\rbras*{t\delta^d}^k (k+1)^d}{(k+1)!}.
	\end{align*}
\end{lemma}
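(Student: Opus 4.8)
The plan is to start from the $U$-statistic representation of $F_k$ and apply the multivariate Mecke equation for the Poisson point process $\eta_d$. Since the summand depends only deterministically on its arguments, Mecke turns the expectation of the sum over $\eta^{k+1}_{\neq}$ into an integral against $\mu_d^{k+1} = t^{k+1}\Lambda_d^{k+1}$ on $W^{k+1}$, yielding
\begin{align*}
	\E*{F_k} = \frac{t^{k+1}}{(k+1)!}\int_{W^{k+1}}\prod_{i=0}^{k}\prod_{j=i+1}^{k}\1\cbras*{\unorm*{x_j-x_i}\le\delta}\,dx_0\cdots dx_k.
\end{align*}
Everything then reduces to estimating the purely geometric integral $I := \int_{W^{k+1}}\prod_{i<j}\1\cbras*{\unorm*{x_j-x_i}\le\delta}\,dx_0\cdots dx_k$.

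The key observation is that the uniform norm decouples the coordinates: writing $x_i = (x_i^{(1)},\ldots,x_i^{(d)})$, the condition $\unorm*{x_j-x_i}\le\delta$ is equivalent to $|x_j^{(m)}-x_i^{(m)}|\le\delta$ for every $m\in\cbras*{1,\ldots,d}$. Hence the integrand is a product over the $d$ coordinates of identical one-dimensional integrands, and since $W=[-\tfrac12,\tfrac12]^d$ is itself a product, Fubini factorizes the integral as $I = J^d$, where
\begin{align*}
	J := \int_{[-1/2,1/2]^{k+1}}\prod_{0\le i<j\le k}\1\cbras*{|u_j-u_i|\le\delta}\,du_0\cdots du_k = \int_{[-1/2,1/2]^{k+1}}\1\cbras*{\max_i u_i - \min_i u_i \le \delta}\,du_0\cdots du_k.
\end{align*}
Thus $J$ is exactly the volume of the set of $(k+1)$-tuples in $[-\tfrac12,\tfrac12]$ whose range is at most $\delta$. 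It is precisely this factorization that produces the factor $(k+1)^d$ in the final bound.

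It remains to evaluate $J$. I would condition on which coordinate realizes the minimum: by symmetry $J$ equals $(k+1)$ times the volume of the event that $u_0$ is the minimum and the remaining $k$ points lie in $[u_0,u_0+\delta]$. Integrating out $u_0\in[-\tfrac12,\tfrac12]$ and accounting for the truncation of this window near the upper boundary gives
\begin{align*}
	J = (k+1)\rbras*{(1-\delta)\delta^k + \tfrac{1}{k+1}\delta^{k+1}} = (k+1)\delta^k - k\,\delta^{k+1}.
\end{align*}
Dropping the nonnegative term yields the upper bound $J\le(k+1)\delta^k$, while factoring out $(k+1)\delta^k$ and using $\tfrac{k}{k+1}\le 1\le 2$ gives $J\ge(1-2\delta)(k+1)\delta^k$. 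Raising these to the $d$-th power, using $\delta^{kd}=(\delta^d)^k$ and $t^{k+1}(\delta^d)^k=t(t\delta^d)^k$, and substituting back into the Mecke identity produces exactly the claimed two-sided bound.

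I expect the main obstacle to be the \emph{sharp} evaluation of the one-dimensional integral $J$. The naive estimate obtained by only requiring each $u_i$ to lie within $\delta$ of a fixed anchor $u_0$ gives $J\le(2\delta)^k$, which is strictly larger than $(k+1)\delta^k$ for $k\ge 2$ and is therefore too lossy to yield an asymptotically sharp expectation. The order-statistics (condition-on-the-minimum) argument is what recovers the correct constant $(k+1)$ per coordinate, so pinning down this leading constant — rather than the routine boundary bookkeeping — is the real crux of the lemma.
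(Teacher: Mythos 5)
Your proof is correct, and its skeleton is the same as the paper's: Mecke's formula, factorization of the uniform-norm indicator over the $d$ coordinates, and an order-statistics evaluation of the resulting one-dimensional integral (this is indeed where the constant $k+1$ per coordinate comes from, exactly as you identify). The one genuine difference is in how the boundary of $W$ is handled. The paper first centers at $y_0$ and rescales by $\delta$, then sandwiches the inner domain of integration between $W_{-\delta}\times \uBB 01^k$ and $W\times \uBB 01^k$; the factor $(1-2\delta)^d=\Lambda_d(W_{-\delta})$ in the lower bound is the volume of the inner parallel set, and what remains is the boundary-free integral $\cI_{\dsE}(1,k)=k+1$, computed by conditioning on which two variables realize the maximum and the minimum ($k(k-1)$ regions). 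You instead evaluate the one-dimensional integral exactly on $[-\tfrac12,\tfrac12]^{k+1}$, conditioning only on the minimum ($k+1$ regions), which gives the closed form $J=(k+1)\delta^k-k\delta^{k+1}$ and hence the exact identity
\begin{align*}
	\E*{F_k}=\frac{t^{k+1}}{(k+1)!}\rbras*{(k+1)\delta^k-k\delta^{k+1}}^d,
\end{align*}
from which both stated bounds follow since $0\le k\delta^{k+1}\le 2(k+1)\delta^{k+1}$. Your computation of $J$ checks out (for $k=1$ it reduces to $2\delta-\delta^2$, the correct area), so your route is marginally sharper --- it yields equality rather than a two-sided sandwich --- at no extra cost. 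The trade-off is that the paper's inner-parallel-set device is the one that carries over to the variance and higher-moment integrals of Lemmas \ref{lem:variance:step:1} and \ref{lem:DF:step:1}, where an exact evaluation of the boundary contribution is no longer tractable.
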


\begin{lemma}\label{lem:variance}
	For all $d \geq 1$ and $k \geq 1$ there existing explicit constants $\bC(k,r) \in (0,\infty)$ only depending on $k$ and $r$ such that the variance of the number of $k$-simplices in the random Vietoris-Rips complex $\vietoris^{\infty}(\eta_d,\delta_d)$ is bounded by
	\begin{align*}
		\V*{F_k} & \geq \E*{F_k} + \rbras*{1-2\delta}^d t(t\delta^d)^k \sum\limits_{r=1}^k \bC(k,r) (t\delta^d)^{k+1-r} \rbras*{\tfrac{2(k+2)(k+1-r)}{r+1} + r}^d,\\
		\V*{F_k} & \leq \E*{F_k} + t(t\delta^d)^k \sum\limits_{r=1}^k \bC(k,r) (t\delta^d)^{k+1-r} \rbras*{\tfrac{2(k+2)(k+1-r)}{r+1} + r}^d.
	\end{align*}
\end{lemma}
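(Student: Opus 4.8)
The plan is to treat $F_k$ as a Poisson $U$-statistic of order $k+1$ and compute its variance with the multivariate Mecke formula, organizing the second moment by the number $r$ of vertices two simplices share. Writing $F_k=\sum_{Y}\bar h(Y)$ as a sum over $(k+1)$-subsets $Y\subseteq\eta$ with $\bar h(Y)=\prod_{x,x'\in Y}\1\cbras*{\unorm*{x-x'}\le\delta}\in\cbras*{0,1}$, I would expand $\E*{F_k^2}=\sum_{r=0}^{k+1}\E*{\sum_{|Y\cap Z|=r}\bar h(Y)\bar h(Z)}$ and apply Mecke to each ordered pair $(Y,Z)$ with $|Y\cap Z|=r$. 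Subtracting $\E*{F_k}^2$ removes the $r=0$ contribution; the diagonal term $r=k+1$ reproduces $\E*{F_k}$ exactly since $\bar h^2=\bar h$; and the remaining terms $r=1,\dots,k$ are precisely those in the claimed sum, so $\V*{F_k}=\E*{F_k}+\sum_{r=1}^k I_r$.

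For fixed $r$, put $p:=k+1-r$ and split $Y\cup Z$ into the shared set $S$ ($|S|=r$), $A:=Y\setminus Z$ and $B:=Z\setminus Y$ with $|A|=|B|=p$. Mecke then yields
\[
	I_r=\tfrac{1}{r!\,(p!)^2}\,t^{2(k+1)-r}\int_{W^{2(k+1)-r}} \bar h(S\cup A)\,\bar h(S\cup B)\,\mathrm d\Lambda,
\]
so the combinatorial prefactor is the explicit constant $\bC(k,r)=\tfrac{1}{r!\,((k+1-r)!)^2}$. Because the uniform-norm constraint $\unorm*{x-x'}\le\delta$ factorizes coordinatewise, this integral is the $d$-th power of the one-dimensional overlap volume $V_{k,r}$, exactly as in the proof of Lemma~\ref{lem:expectation}.

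The core step is the evaluation of $V_{k,r}$. After integrating out a global translation (the window factor), I would condition on the shared cluster $S$ through its diameter $s=\max S-\min S\in[0,\delta]$, whose unnormalized density is $r(r-1)s^{r-2}$ (the case $r=1$ degenerates to a single point and is immediate). Given $S$, the two remaining blocks $A$ and $B$ are independent, each contributing the volume $g_p(s)$ of placements keeping $S\cup A$ (resp.\ $S\cup B$) within diameter $\delta$; a short induction gives $g_p(s)=(p+1)\delta^p-p\,\delta^{p-1}s$. Hence, up to the window factor, $V_{k,r}=\int_0^\delta r(r-1)s^{r-2}\,g_p(s)^2\,\mathrm ds$, a polynomial (Beta-type) integral evaluating to $\delta^{2k+1-r}\rbras*{\tfrac{2(k+2)(k+1-r)}{r+1}+r}$; the factor $\tfrac{1}{r+1}$ is exactly the $s^2$-moment produced here.

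Finally, the window factor: since $S\cup A$ and $S\cup B$ each have diameter at most $\delta$ and overlap in $S$, the whole configuration $Y\cup Z$ spans at most $2\delta$ in every coordinate, so confining it to $[-\tfrac12,\tfrac12]$ costs a per-coordinate factor in $[1-2\delta,1]$, i.e.\ a global factor in $[(1-2\delta)^d,1]$. Bounding this factor below by $(1-2\delta)^d$ and above by $1$ in every summand and collecting the powers of $t$ and $\delta$ gives the two stated estimates. I expect the one-dimensional computation of $V_{k,r}$ — pinning down the closed form of $g_p(s)$ and carrying out the diameter integral to reach the exact base $\tfrac{2(k+2)(k+1-r)}{r+1}+r$ — to be the main obstacle, together with the bookkeeping that verifies the single boundary factor $(1-2\delta)^d$ dominates every overlap term simultaneously.
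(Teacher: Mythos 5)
Your proposal is correct and follows essentially the same route as the paper: decompose $\E*{F_k^2}$ by the number $r$ of shared vertices via the Mecke formula (the paper's Corollary \ref{cor:decomp:var}), identify the diagonal term $r=k+1$ with $\E*{F_k}$, reduce to $d=1$ by coordinatewise factorization, and evaluate the resulting one-dimensional overlap integral in closed form, handling the boundary via the inner parallel set to get the factor between $(1-2\delta)^d$ and $1$. The only difference is cosmetic — you condition on the diameter of the shared block and square the conditional volume $g_p(s)$, whereas the paper first integrates out the $z$-block given the $y$'s and then splits by which $y$ is the max/min — and both computations yield the same base $\tfrac{2(k+2)(k+1-r)}{r+1}+r$ and the same explicit constant $\bC(k+1,r)=\tfrac{1}{r!\,((k+1-r)!)^2}$.
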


To ensure that the lower and upper bound for the expectation and variance tend to the same limit, we assume that $(\delta_d)_{d \in \NNN}$ is decreasing sufficiently fast, i.e. we assume
\begin{align*}
	\lim\limits_{d \rightarrow \infty} d \delta_d = 0,
\end{align*}
see Remark \ref{rem:delta:convergence} for more details.

The asymptotic behavior of $F_k$ depends on how fast the sequence $(t_d)_{d \in \NNN}$ increases as $d \rightarrow \infty$.
This phenomenon is quite common for asymptotic results related to edge counts in fixed dimension and was also shown for edge-counts in high-dimensional random geometric graphs in our previous work \cite{GrygierekThaele2016, Grygierek2019b} considering a slightly different model.

In particular, here, one has to distinguish the following phases, determined by the limit of the expectation $\E*{F_k}$:
\begin{align}
	\label{eq:phase:1}
	\lim\limits_{d \rightarrow \infty} \frac{1}{(k+1)!} t(t\delta^d)^k(k+1)^d & = \infty,\\
	\label{eq:phase:2}
	\lim\limits_{d \rightarrow \infty} \frac{1}{(k+1)!} t(t\delta^d)^k(k+1)^d & = \theta \in (0,\infty),\\
	\label{eq:phase:3}
	\lim\limits_{d \rightarrow \infty} \frac{1}{(k+1)!} t(t\delta^d)^k(k+1)^d & = 0.
\end{align}

The rate of convergence in the following central limit theorem and Poisson limit theorem will be measured by the so-called Wasserstein distance $\dist_W(\cdot, \cdot)$ resp. the total variation distance $\dist_{TV}(\cdot,\cdot)$, see Section \ref{sec:prelim:malliavin} below for a formal definition. We indicate convergence in distribution by writing $\os{D}{\rightarrow}$.

If the expectation tends to infinity \eqref{eq:phase:1} the $k$-simplex counting functional satisfies a central limit theorem:

\begin{theorem}[Gaussian Approximation]\label{thm:gaussian-limit}
	For $k \geq 1$ fixed, we assume $\E*{F_k} \rightarrow \infty $ for $d \rightarrow \infty$. 
	Let $\cN(0,1)$ be a standard Gaussian distributed random variable and denote by $\widetilde{F_k} := \tfrac{F_k - \E*{F_k}}{\sqrt{\V*{F_k}}}$ the standardized version of $F_k$.
	
	If $(t\delta^d) \rightarrow 0$ for $d \rightarrow \infty$, then
	\begin{align*}
		\dist_{W}(\widetilde{F_k}, \cN(0,1))
		& =
		\begin{cases}
			\cO\rbras*{\rbras*{\E*{F_k}}^{-\frac{1}{2}} (k+1)^{\frac{3d}{2}}2^d}, & k \leq 3,\\
			\cO\rbras*{\rbras*{\E*{F_k}}^{-\frac{1}{2}} (k+1)^{2d}}, & k \geq 3.
		\end{cases}
	\end{align*}
	If $(t\delta^d) \rightarrow c \in (0,\infty)$ or $(t\delta^d) \rightarrow \infty$ for $d \rightarrow \infty$, then
	\begin{align*}
		\dist_{W}(\widetilde{F_k}, \cN(0,1))
		=
		\cO\rbras*{t^{-\frac{1}{2}} \rbras*{1+\tfrac{1}{k^2+2k}}^d 2^d}.
	\end{align*}
	In particular, if $\E*{F_k}$ resp. $t$ is increasing sufficiently fast depending on $d$ one has that
	\begin{align*}
		\widetilde{F_k} \os{D}{\rightarrow} \cN(0,1), \quad \text{as} \quad d \rightarrow \infty.
	\end{align*}
\end{theorem}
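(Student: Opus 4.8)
The plan is to run the Malliavin--Stein method for Poisson functionals, in the form of the second-order Poincaré inequality of Last, Peccati and Schulte. Since $F_k$ is a Poisson $U$-statistic of the fixed finite order $k+1$, its Wiener--It\^o chaos expansion terminates, so it lies in the domain of the Malliavin derivative and the inequality is applicable. It bounds $\dist_W(\widetilde{F_k},\cN(0,1))$ by a finite sum of terms, each an integral of mixed moments of the first and second difference (add-one-cost) operators $D_{x_1}F_k$ and $D^2_{x_1,x_2}F_k$, divided by a power of $\sigma:=\sqrt{\V*{F_k}}$. Schematically the three contributions are built from $\E*{(D_{x_1}F_k)^2(D_{x_2}F_k)^2}$, from $\E*{(D^2_{x_1,x_3}F_k)^2(D^2_{x_2,x_3}F_k)^2}$, and from $\E*{\lvert D_{x_1}F_k\rvert^3}$, integrated against the intensity measure.

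First I would compute the difference operators explicitly. Adding a single point $x_1$ to $\eta$ creates exactly the new $k$-simplices having $x_1$ as a vertex, so $D_{x_1}F_k$ is again a $U$-statistic, of order $k$, counting the complete cliques through $x_1$; likewise $D^2_{x_1,x_2}F_k$ counts the $k$-simplices containing both $x_1$ and $x_2$ and vanishes unless $\unorm*{x_1-x_2}\le\delta$. Applying the multivariate Mecke equation, every mixed moment above reduces to a deterministic integral over some $W^m$ of a product of ball-indicators $\1\cbras*{\unorm*{y_j-y_i}\le\delta}$, i.e.\ to exactly the type of integral already estimated for Lemmas \ref{lem:expectation} and \ref{lem:variance}.

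The core of the argument is the evaluation of these high-dimensional geometric integrals. The building block is the per-coordinate estimate $\int_0^1\!\int_0^1 \1\cbras*{\lvert s-u\rvert\le\delta}\,ds\,du = 2\delta-\delta^2$, which tensorizes over the $d$ coordinates and, after accounting for the boundary, produces the factor $(2\delta)^d$ together with the correction $(1-2\delta)^d$. For a single clique on $k+1$ vertices the corresponding volume per coordinate is $(k+1)\delta^k(1+\cO(\delta))$, yielding the factor $(t\delta^d)^k(k+1)^d$; for two cliques sharing $r$ common vertices one obtains the overlap factors $\rbras*{\tfrac{2(k+2)(k+1-r)}{r+1}+r}^d$ exactly as in Lemma \ref{lem:variance}. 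Carrying these computations through the three Poincaré terms and using the variance lower bound $\V*{F_k}\ge\E*{F_k}$ to estimate $\sigma^{-1}$, each term is seen to scale like $\rbras*{\E*{F_k}}^{-1/2}$ times an explicit exponential-in-$d$ prefactor.

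The main obstacle is the combinatorial bookkeeping needed to decide which configuration dominates, since the stated rate is the maximum over the three Poincaré terms and over the overlap index $r$. Comparing the resulting prefactors $\rbras*{(k+1)^{3/2}2}^d$ and $\rbras*{(k+1)^2}^d$ — which coincide precisely at $k=3$, the former being larger for $k\le 3$ and the latter for $k\ge 3$ — gives the two regimes of the first display. In the regime $(t\delta^d)\to c\in(0,\infty)$ or $(t\delta^d)\to\infty$ the lowest-overlap contribution no longer decays, so the natural normalization is by $t$ rather than by $\E*{F_k}$; tracking the dominant term then yields the factor $\rbras*{1+\tfrac{1}{k^2+2k}}^d2^d$ and the rate $\cO(t^{-1/2}(1+\tfrac{1}{k^2+2k})^d2^d)$. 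Finally, under the phase \eqref{eq:phase:1} growth assumption, choosing $\E*{F_k}$ (respectively $t$) to increase fast enough in $d$ forces the whole Wasserstein bound to $0$, which by the definition of $\dist_W$ gives $\widetilde{F_k}\os{D}{\rightarrow}\cN(0,1)$.
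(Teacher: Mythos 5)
Your proposal follows essentially the same route as the paper: the Last–Peccati–Schulte second-order Poincar\'e inequality, explicit computation of $D_xF_k$ and $D^2_{x_1,x_2}F_k$ as lower-order $U$-statistics, reduction of their mixed moments via the multivariate Mecke equation to products of ball-indicator integrals that tensorize coordinatewise, and a case split on the limit of $t\delta^d$ with the matching variance lower bounds (the $r=1$ overlap term taking over when $t\delta^d\not\to 0$), which is exactly how the paper obtains the two $k$-regimes and the $t^{-1/2}\bigl(1+\tfrac{1}{k^2+2k}\bigr)^d2^d$ rate. The combinatorial bookkeeping you flag as the main obstacle is precisely what the paper's moment-decomposition machinery (Section \ref{sec:decomp} and Theorem \ref{thm:derivative:bound}) carries out, so your plan is sound and matches the published argument.
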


If the expectation tends to a finite positive limit \eqref{eq:phase:2} the $k$-simplex counting functional satisfies a Poisson limit theorem:

\begin{theorem}[Poisson Approximation]\label{thm:poisson-limit}
	For $k \geq 1$ fixed, we assume $\E*{F_k} \rightarrow \theta \in (0,\infty)$ for $d \rightarrow \infty$. 
	Let $\cP(\theta)$ be a Poisson distributed random variable with expectation and variance $\theta$. Then 
	\begin{align*}
		\dist_{TV}(F_k,\cP(\theta)) = \cO\rbras*{\abs*{\E*{F_k} - \theta}} + \cO\rbras*{\abs*{\V*{F_k} - \theta}} + 
		\begin{cases}
			\cO\rbras*{t^{-\frac{1}{2k}} (k+1)^{\frac{d(3k-1)}{2k}} 2^{d}}, & k \leq 3,\\
			\cO\rbras*{t^{-\frac{1}{2k}} (k+1)^{\frac{d(4k-1)}{2k}}}, & k \geq 3.
		\end{cases}
	\end{align*}
	In particular, if $t$ is increasing sufficiently fast depending on $d$ one has that
	\begin{align*}
		F_k \os{D}{\rightarrow} \cP(\theta), \quad \text{as} \quad d \rightarrow \infty.
	\end{align*}
\end{theorem}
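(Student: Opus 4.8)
The plan is to apply a Chen--Stein/Malliavin--Stein bound for the Poisson approximation of an integer-valued functional of $\eta$ --- the total-variation counterpart of the second-order Poincar\'e inequality that drives Theorem~\ref{thm:gaussian-limit}. Such a bound estimates $\dist_{TV}(F_k,\cP(\theta))$ by three contributions: a mean-matching term of order $\abs*{\E*{F_k}-\theta}$, a variance-matching term of order $\abs*{\V*{F_k}-\theta}$ (it is needed because $\cP(\theta)$ has equal mean and variance), and a genuine remainder $R$ built from integrals of the add-one-point difference operators $D_xF_k$ and $D^2_{x_1,x_2}F_k$. The first two contributions are precisely the first two summands in the assertion, and under the phase-\eqref{eq:phase:2} hypothesis together with Lemmas~\ref{lem:expectation} and~\ref{lem:variance} they both tend to $0$; the Stein prefactor $\tfrac{1-e^{-\theta}}{\theta}$ is $\cO(1)$ for $\theta\in(0,\infty)$ and may be absorbed. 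The substance of the proof is therefore the estimation of $R$.

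First I would make the difference operators explicit. Inserting a point $x$ into $\eta$ can only create $k$-simplices having $x$ as a vertex, so $D_xF_k$ equals the number of $k$-subsets of $\eta$ lying in the uniform $\delta$-ball about $x$ and pairwise within distance $\delta$; thus $D_xF_k$ is itself a $U$-statistic of order $k$, while $D^2_{x_1,x_2}F_k$ (nonzero only when $\unorm*{x_1-x_2}\le\delta$) is a $U$-statistic of order $k-1$. Their moments are then evaluated by the multivariate Mecke equation, which turns each term of $R$ into the same kind of high-dimensional volume integral $\int\prod\1\cbras*{\unorm*{\cdot}\le\delta}$ that underlies Lemmas~\ref{lem:expectation} and~\ref{lem:variance}. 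In this way $R$ is reduced to exactly the geometric and combinatorial input already developed there: containment and intersection volumes producing the factors $\rbras*{1-2\delta}^d$, $\delta^{dk}$ and the exponential-in-$d$ bases $(k+1)^d$ and $\rbras*{\tfrac{2(k+2)(k+1-r)}{r+1}+r}^d$.

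Collecting these estimates, $R$ takes the form $R\sim\sqrt{t\delta^d}\cdot(\text{geometric factor})$, and the geometric factor turns out to be exactly the one appearing in Theorem~\ref{thm:gaussian-limit}: it equals $(k+1)^{3d/2}2^{d}$ for $k\le 3$ and $(k+1)^{2d}$ for $k\ge 3$, the switch at $k=3$ recording which overlap pattern (equivalently which competing difference-operator moment) dominates. I would then invoke the phase-\eqref{eq:phase:2} scaling: from $\E*{F_k}\to\theta$ one reads off $t(t\delta^d)^k(k+1)^d\to\theta\,(k+1)!$, whence $\sqrt{t\delta^d}\sim\text{const}\cdot\rbras*{t(k+1)^d}^{-1/(2k)}=\text{const}\cdot t^{-1/(2k)}(k+1)^{-d/(2k)}$. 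Multiplying the two geometric factors by this smallness factor yields precisely $t^{-1/(2k)}(k+1)^{d(3k-1)/(2k)}2^{d}$ and $t^{-1/(2k)}(k+1)^{d(4k-1)/(2k)}$, the stated error terms. The final assertion follows because, although these carry the diverging factor $(k+1)^{cd}2^d$, the prefactor $t^{-1/(2k)}$ defeats it once $t=t_d$ is chosen to grow fast enough in $d$ while $\E*{F_k}\to\theta$ is maintained; the phase-\eqref{eq:phase:2} constraint then simultaneously keeps $\delta_d$ small, compatibly with the standing hypothesis $d\delta_d\to0$.

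The hard part will be the bookkeeping behind $R$. Among the many ways in which two or three $(k+1)$-tuples can share vertices one must isolate the configuration contributing the dominant high-dimensional volume and verify that, after the phase-2 substitution, the surviving exponent is exactly $d(3k-1)/(2k)$ (respectively $d(4k-1)/(2k)$), with the correct base $(k+1)$ and $2^d$ factor, and confirm that the regime boundary truly sits at $k=3$. This is the same overlap analysis that governs Lemma~\ref{lem:variance}, now carried out for the higher difference-operator moments and tracked precisely enough that neither the $\cO(1)$ Stein prefactor nor the explicit constants $\bC(k,r)$ disturb the stated orders.
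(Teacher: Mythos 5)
Your proposal follows the paper's own route: the same Malliavin--Stein total-variation bound (Theorem \ref{thm:malliavin-poisson-limit}) with its mean- and variance-matching terms, the identification of $D_xF_k$ and $D^2_{x_1,x_2}F_k$ as $U$-statistics of orders $k$ and $k-1$ whose moments are controlled via the Mecke formula and the overlap decomposition of Section \ref{sec:decomp}, and the same final substitution $(t\delta^d)^{1/2}\sim t^{-1/(2k)}(k+1)^{-d/(2k)}$ turning the geometric factors $(k+1)^{3d/2}2^d$ and $(k+1)^{2d}$ into the stated rates. The bookkeeping you defer is exactly what the paper carries out in Theorem \ref{thm:derivative:bound} and Lemma \ref{lem:gamma:bound}, so the argument is correct and essentially identical in structure.
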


\begin{remark}
	If the expectation tends to zero \eqref{eq:phase:3} we also have $\V*{F_k} \rightarrow 0$, indicating that the $k$-simplex counting functional vanishes in the limit, since the random Vietoris-Rips complex contains almost surely no $k$-simplices.
\end{remark}

\begin{conjecture}
	The dimension dependent factors in the bounds on the probability-metrics in the limit theorems presented above impose a condition on the growth of the intensity $t$ as $d$ tends to infinity.
	We may conjecture, that this condition can be weakened or even dropped completely, by improving the dimension depend factors if the exact values of the integrals arising in Theorem \ref{thm:derivative:bound} are calculated and used similar to the proof of Lemma \ref{lem:variance:limit}.
\end{conjecture}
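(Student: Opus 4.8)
\emph{Strategy for resolving the conjecture.}
The bounds in Theorem~\ref{thm:gaussian-limit} and Theorem~\ref{thm:poisson-limit} arise from the Malliavin--Stein second-order Poincar\'e machinery, which reduces the distance to the Gaussian resp.\ Poisson law to a finite collection of integrals of products of the first difference operator $D_x F_k$ and the second difference operator $D^2_{x,y} F_k$, normalized by powers of $\V*{F_k}$. Theorem~\ref{thm:derivative:bound} controls these integrals from above, and it is precisely the looseness of those upper estimates that produces the inflated dimension-dependent bases --- namely $2(k+1)^{3/2}$, $(k+1)^2$ and $2\rbras*{1+\tfrac{1}{k^2+2k}}$ --- and, through them, the growth requirement on $t$. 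The plan is to replace these upper bounds by exact asymptotic evaluations, exactly as the proof of Lemma~\ref{lem:variance:limit} upgrades the crude variance estimate of Lemma~\ref{lem:variance} to a sharp limit.

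First I would make the reduction explicit. Since $F_k$ is a $U$-statistic of order $k+1$, the operator $D_x F_k$ is a $U$-statistic of order $k$ counting the $k$-simplices through $x$, and $D^2_{x,y} F_k$ counts those through both $x$ and $y$; applying the multivariate Mecke equation to the relevant $\gamma$-terms expands each of them, for every overlap parameter $r \in \cbras*{1,\dots,k}$, into an integral over $W^m$ of a product $\prod \1\cbras*{\unorm*{x_i - x_j} \leq \delta}$ whose index set is the edge set of a graph $G$ obtained by gluing two $(k+1)$-cliques along $r$ common vertices and anchoring them at the external points. This is the same configuration structure already present in the variance expansion behind Lemma~\ref{lem:variance}.

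The decisive step is the product factorization over coordinates. Because $\unorm*{x-y} \leq \delta$ holds if and only if $\abs*{x^{(\ell)} - y^{(\ell)}} \leq \delta$ for every coordinate $\ell \in \cbras*{1,\dots,d}$, each such integral splits as the $d$-th power of the one-dimensional configuration integral $I_G(\delta) := \int_{[-1/2,1/2]^m} \prod_{\{i,j\}\in E(G)} \1\cbras*{\abs*{u_i - u_j}\le \delta}\,du$, up to boundary corrections of order $(1-2\delta)^d$ already seen in Lemma~\ref{lem:expectation}. As $\delta \to 0$ one has $I_G(\delta) = c_G\,\delta^{\,m-1}\rbras*{1+\cO(\delta)}$, where $m$ is the number of vertices of $G$, the exponent $m-1$ counts the edges of a spanning tree, and $c_G$ is an explicit combinatorial volume constant --- the exact analogue of the constants $\bC(k,r)$ and the sharp bases $\tfrac{2(k+2)(k+1-r)}{r+1}+r$ computed for the variance. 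Substituting $c_G^{\,d}$ in place of the present upper bound, normalizing by the already sharp variance asymptotics of Lemma~\ref{lem:variance:limit}, and taking the worst case over $r$ yields a refined estimate whose dimension-dependent factor is this sharp net base raised to the power $d$.

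The main obstacle is the exact combinatorial evaluation of $c_G$ for the glued two-clique graphs with arbitrary overlap $r$. For a single $(k+1)$-clique the constant is the clean $k+1$, the leading coefficient of the probability that $k+1$ points lie in a common window of width $\delta$, which is why $(k+1)^d$ governs $\E*{F_k}$; for the union of two cliques sharing $r$ vertices the relevant volume is that of a graphical configuration polytope whose volume admits an acyclic-orientation / chromatic-type expansion, and no single closed form is immediate. One would have to derive such a general formula and then prove that the resulting worst-case net base is at most $1$, which would make the dimension-dependent factor vanish and remove the growth condition on $t$ entirely. It is conceivable that the sharp base, while strictly smaller than the current one, still exceeds $1$ for some $r$; in that event only a genuine weakening of the condition on $t$, rather than its complete removal, would follow. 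Pinning down the sharp constant $c_G$ and deciding on which side of $1$ its worst case falls is therefore the crux of the conjecture.
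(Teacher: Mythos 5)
You should note first that the statement in question is, in the paper, a \emph{conjecture}: the paper offers no proof of it, and indeed the content of the conjecture is precisely the strategy you describe, namely that computing the exact values of the integrals in Theorem~\ref{thm:derivative:bound} and feeding them into the Malliavin--Stein bounds ``similar to the proof of Lemma~\ref{lem:variance:limit}'' should weaken or remove the growth condition on $t$. Your proposal therefore does not go beyond the paper: it elaborates the same plan (glued-clique configuration integrals, coordinate-wise factorization reducing everything to $d$-th powers of one-dimensional constants, exact evaluation in the spirit of Lemmas~\ref{lem:expectation:step:2} and~\ref{lem:variance:step:2} instead of the maximization used in Lemmas~\ref{lem:DF:step:2} and~\ref{lem:DF:step:3}), and you yourself concede that the decisive step --- deriving a closed form for the constants $c_G$ of two cliques glued along $r$ vertices, and then showing that the worst-case base, after normalization by the appropriate power of $\mathrm{Var}(F_k)$, does not exceed $1$ --- is not carried out. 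Identifying the crux is not the same as resolving it: as written this is a research programme, not a proof, and the conjecture remains exactly as open after your argument as before it.

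One concrete warning about the programme itself: not all of the offending dimension-dependent factors originate in the loose moment bounds of Theorem~\ref{thm:derivative:bound}, so sharpening that theorem alone cannot make the bases collapse term by term. In the proof of Lemma~\ref{lem:gamma:bound} the factor $4^d$ entering $\gamma_1$ and $\gamma_2$ comes from the essentially exact evaluation $t^3\delta^{2d}4^d$ of the integral of the two indicator functions linking the external points $x_1,x_2,x_3$; this factor survives even if every moment integral is computed exactly. Any successful argument must therefore exhibit a cancellation between this exact geometric factor, the sharp fourth-moment constants for the glued-clique graphs, and the variance constants $\frac{2(k+2)(k+1-r)}{r+1}+r$ of Lemma~\ref{lem:variance}, and it is not evident a priori on which side of $1$ the resulting net base falls --- which is, of course, exactly why the paper states this as a conjecture rather than a theorem. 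Your closing sentence acknowledges this possibility, but that acknowledgment is an admission that the claimed statement has not been established, not a step toward establishing it.
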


\bigskip

This paper is organized as follows. 
For the convenience of the reader, we repeat the relevant material on the Malliavin-Stein method for normal approximation and Poisson approximation of Poisson functionals in Section \ref{sec:prelim:malliavin}. Additionally, we give a short introduction to simplicial complexes in Section \ref{sec:prelim:simcomp}.
In Section \ref{sec:decomp} we introduce a decomposition technique for $U$-statistics that will be used in the proof of our main results, that are presented in Section \ref{sec:proof}:
We start with the expectation and variance bounds, Lemmas \ref{lem:expectation} and \ref{lem:variance} in Subsection \ref{sec:proof:exp-var}.
In Subsection \ref{sec:proof:malliavin-derivative} we prepare estimations on the first and second order Malliavin derivatives, that will finally be used in Subsection \ref{sec:proof:limits} to obtain the central limit theorems, Theorem \ref{thm:gaussian-limit}, and the Poisson limit theorem, Theorem \ref{thm:poisson-limit}.

\section{Preliminaries}\label{sec:prelim}

The $d$-dimensional Euclidean space is denoted by $\RR^d$ and we let $\sB^d$ be the Borel $\sigma$-field on $\RR^d$.
The Lebesgue measure on $\RR^d$ is indicated by $\Lambda_d$. The $d$-dimensional closed ball with respect to the uniform norm, center in $z \in \RR^d$ and radius $r > 0$ is defined by
\begin{align*}
	\uBB zr := \cbras*{x \in \RR^d : \unorm{x-z} \leq r}.
\end{align*}

\subsection{Poisson functionals and difference operators}

Let $\rN_\sigma$ denote the class of all $\sigma$-finite counting measures $\chi$ on $\RR^d$, i.e. $\chi(B) \in \NNN \cup \{ \infty \}$ for all $B \in \sB^d$. We equip the space $\rN_\sigma$ with the $\sigma$-field $\sN_\sigma$ generated by the mappings $\chi \rightarrow \chi(B)$, $B \in \sB^d$.

\begin{definition}
	A Poisson point process with intensity measure $\mu$ is a random counting measure on $\RR^d$, that is a random element in the space $\rN_\sigma$, that satisfies the following properties:
	\begin{enumerate}
		\item For all $B \in \sB^d$ and all $k \in \NNN$ it holds that $\eta(B)$ is a Poisson distributed random variable with expectation $\mu(B)$, i.e.
			\begin{align*}
				\Prob*{\eta(B) = k} = \frac{\mu(B)^k}{k!}e^{-\mu(B)},
			\end{align*}
			where we set $\tfrac{\infty^k}{k!}e^{-\infty} = 0$ for all $k$ if $\mu(B) = \infty$.
		\item For all $m \in \NNN$ and all pairwise disjoint measurable sets $B_0, \ldots, B_m \in \sB^d$, the random variables $\eta(B_0), \ldots, \eta(B_m)$ are independent.
	\end{enumerate}
\end{definition}

To simplify our notation we will often handle $\eta$ as a random set of points using
\begin{align*}
	x \in \eta \Leftrightarrow x \in \cbras*{y \in \RR^d : \eta(\cbras{y}) > 0}.
\end{align*}

It is well known that such a Poisson point process $\eta$ satisfies the following multivariate Mecke formula, see \cite[Theorem 4.4]{LastPenrose2018}.

\begin{lemma}
	For all $m \in \NN$ and all non-negative measurable functions $h:(\RR^d)^m \times \rN_\sigma \rightarrow \RR$ it holds, that
	\begin{align}\label{eq:mecke}
		\begin{split}
		& \dsE \sum\limits_{(y_1,\ldots,y_m) \in \eta^m_{\neq}} h(y_1,\ldots,y_m; \eta)\\
		& \quad = \int\limits_{(\RR^d)^m} \E*{h(y_1,\ldots,y_m;\eta + \delta_{y_1} + \ldots + \delta_{y_m})} \id \mu^m(y_1,\ldots,y_m),
		\end{split}
	\end{align}
	where $\eta^m_{\neq}$ is the collection of $m$-tuples of pairwise distinct points charged by $\eta$.
\end{lemma}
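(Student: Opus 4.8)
The plan is to establish the multivariate identity by induction on $m$, using the univariate Mecke equation ($m=1$) as the base case, so I first sketch the latter. For the base case I would begin with a finite intensity measure, $\mu(\RR^d) = c < \infty$. Here one realises $\eta = \sum_{i=1}^N \delta_{X_i}$, where $N$ is Poisson distributed with parameter $c$ and $X_1, X_2, \ldots$ are i.i.d.\ with law $c^{-1}\mu$, independent of $N$. Conditioning on $\{N = n\}$ and exploiting the exchangeability of the $X_i$, the inner sum over the $n$ points collapses to $n$ copies of a single representative term, and the Poisson weight satisfies $e^{-c}\tfrac{c^n}{n!}\,n = c\,e^{-c}\tfrac{c^{n-1}}{(n-1)!}$. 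This index shift by one is exactly what turns the remaining $n-1$ free points into an independent copy of $\eta$ and reassembles the right-hand side $\int \E*{h(x;\eta + \delta_x)}\,\id\mu(x)$. To pass to a $\sigma$-finite $\mu$, I would exhaust $\RR^d$ by sets of finite $\mu$-measure increasing to $\RR^d$, apply the finite case to the restricted processes, and use the superposition property together with monotone convergence (valid since $h \geq 0$) to remove the truncation.

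For the inductive step, assuming the formula for $m-1$, I would peel off the last coordinate by rewriting the $m$-fold sum over pairwise distinct tuples as
\begin{align*}
	\sum_{(y_1,\ldots,y_m)\in \eta^m_{\neq}} h(y_1,\ldots,y_m;\eta) = \sum_{y_m \in \eta}\; \sum_{(y_1,\ldots,y_{m-1}) \in (\eta - \delta_{y_m})^{m-1}_{\neq}} h(y_1,\ldots,y_m;\eta),
\end{align*}
and then apply the univariate Mecke equation in the variable $y_m$. This introduces an added atom $\delta_{y_m}$; since our intensity $\mu = t\,\Lambda_d$ is diffuse, the new point a.s.\ avoids the support of $\eta$, so removing $y_m$ again returns $\eta$ and the inner sum ranges over $(m-1)$-tuples automatically distinct from $y_m$. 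Applying the induction hypothesis to this inner $(m-1)$-fold sum, now evaluated at $\eta + \delta_{y_m}$ (with $y_m$ treated as a fixed parameter absorbed into the integrand), produces the nested integral $\int \E*{h(y_1,\ldots,y_m;\eta + \delta_{y_1} + \ldots + \delta_{y_m})}\,\id\mu^{m-1}$. Collecting the two integrations into a single integral against $\mu^m$ by Fubini, once more justified by non-negativity, yields the claim.

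The main obstacle is the careful bookkeeping of the pairwise-distinctness constraint $\neq$ through the induction: one must verify that adding $\delta_{y_m}$, and later the remaining atoms, neither spuriously creates nor destroys diagonal terms, so that the combinatorial factors match on both sides at every stage. This is precisely where the diffuseness of $\mu$ enters essentially—the set of configurations in which an added point coincides with an existing one is $\mu$-null—so that each intermediate identity holds almost everywhere and the reduction from the $m$-fold to the $(m-1)$-fold statement is exact.
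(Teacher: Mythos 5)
The paper offers no proof of this lemma at all: it is quoted as a known result with a pointer to Last and Penrose, Theorem 4.4, so there is no in-paper argument to measure you against. Your induction is, in substance, the standard proof of that theorem and it is correct: the univariate case for finite intensity via the mixed-binomial representation and the index shift $e^{-c}\tfrac{c^n}{n!}\,n = c\,e^{-c}\tfrac{c^{n-1}}{(n-1)!}$, extension to $\sigma$-finite intensity by truncation, and an inductive step that peels off one coordinate and applies the univariate Mecke equation to the kernel $g(y;\chi) = \sum_{(y_1,\ldots,y_{m-1})\in(\chi-\delta_y)^{m-1}_{\neq}} h(y_1,\ldots,y_{m-1},y;\chi)$. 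Two small remarks. First, in the truncation step the phrase ``apply the finite case to the restricted processes'' glosses over the fact that $h(y;\eta)$ depends on the whole configuration and not only on $\eta$ restricted to the exhausting set $B_n$; one should condition on the independent restriction of $\eta$ to the complement of $B_n$ and apply the finite-intensity identity to the kernel $(y,\chi)\mapsto h(y;\chi+\eta|_{B_n^c})$ before letting $n\to\infty$ by monotone convergence. Second, your bookkeeping of the distinctness constraint by deleting the atom $\delta_{y_m}$ is exact once one reads $\eta-\delta_{y_m}$ as measure subtraction, since $(\eta+\delta_{y_m})-\delta_{y_m}=\eta$ identically; the cited reference instead keeps the full sum over $\eta$ in the last coordinate and subtracts the diagonal terms, which avoids any appeal to simplicity or diffuseness and hence covers arbitrary $\sigma$-finite intensities. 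For the diffuse intensity $\mu_d = t_d\Lambda_d$ used throughout this paper, your version is perfectly adequate.
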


We call a random variable $F$ a Poisson functional if there exists a measurable map $f:\rN_\sigma \rightarrow \RR$ such that $F = f(\eta)$ almost surely.
The map $f$ is called the representative of $F$.
We define the difference operator or so called ``add-one-cost operator'':

\begin{definition}\label{def:malliavin-difference-operator}
	Let $F$ be a Poisson functional and $f$ its corresponding representative, then the first order difference operator is defined by
	\begin{align*}
		D_xF := f(\eta + \delta_x) - f(\eta), \quad x \in \RR^d,
	\end{align*}
	where $\delta_x$ denotes the Dirac measure with mass concentrated in $x$.
	We say that $F$ belongs to the domain of the difference operator, i.e. $F \in \dom{D}$, if $\E*{F^2} < \infty$ and
	\begin{align*}
		\int\limits_{\RR^d} \! \E*{(D_xF)^2} \, \mu(\id x) < \infty.
	\end{align*}
	The second order difference operator is obtained through iteration:
	\begin{align*}
		D_{x_1,x_2}^2F & := D_{x_1}(D_{x_2}F)\\
			& \phantom{:}= f(\eta + \delta_{x_1} + \delta_{x_2}) - f(\eta + \delta_{x_1}) - f(\eta + \delta_{x_2}) + f(\eta), \quad x_1, x_2 \in \RR^d.
	\end{align*}
\end{definition}

For a deeper discussion of the underlying theory of Poisson point processes, Malliavin-Calculus, the Wiener-It\^{o} chaos expansion and the Malliavin-Stein method presented below, see \cite{PeccatiReitzner2016} and \cite{LastPenrose2018}.

\subsection{Malliavin-Stein method}\label{sec:prelim:malliavin}

We will use the Wasserstein-distance for the normal approximation and the total variation distance for the Poisson approximation, see for instance \cite[Section 2.1]{BourguinPeccati2016}.

\begin{definition}\label{def:wasserstein}
	We denote by $\Lip{1}$ the class of Lipschitz functions $h:\RR \rightarrow \RR$ with Lipschitz constant less or equal to one, i.e. $h$ is absolutely continuous and almost everywhere differentiable with $\norm{h'}_\infty \leq 1$.
	Given two $\RR$-valued random variables $X,Y$, with $\Eabs*{X} < \infty$ and $\Eabs*{Y} < \infty$ the Wasserstein distance between the laws of $X$ and $Y$, written $\dist_W(X,Y)$, is defined as
	\begin{align*}
		\dist_W(X,Y) := \sup\limits_{h \in \Lip{1}} \abs*{\E*{h(X)} - \E*{h(Y)}}.
	\end{align*}
\end{definition}

\begin{definition}\label{def:total-variation}
	Given two $\NNN$-valued random variables $X,Y$, the total variation distance between the laws of $X$ and $Y$, written $\dist_{TV}(X,Y)$, is defined as
	\begin{align*}
	\dist_{TV}(X,Y) := \sup\limits_{A \subseteq \NNN}\abs{\Prob*{X \in A} - \Prob*{Y \in A}}.
	\end{align*}
\end{definition}

Note that the topologies induced by the metrics $\dist_W$ and $\dist_{TV}$ are strictly stronger, than the one induced by convergence in distribution. 
Therefore, if a sequence $(X_n)_n$ of random variables satisfies $\lim_{n \rightarrow \infty} \dist_W(X_n,Y) = 0$ resp. $\lim_{n \rightarrow \infty} \dist_{TV}(X_n,Y) = 0$ for a random variable $Y$ then it holds, that $X_n$ converges to $Y$ in distribution, i.e. $X \os{D}{\rightarrow} Y$.

We rephrase a version of the main result from \cite{LastPeccatiSchulte2016}, a so-called second order Poincar\'{e} inequality for Poisson functionals, see also \cite[Theorem 2.13]{LastPenrose2018}, it is the main device in our proof of Theorem \ref{thm:gaussian-limit}.

\begin{theorem}\label{thm:malliavin-gauss-limit}
	Let $F \in \dom{D}$ be a Poisson functional such that $\E*{F} = 0$ and $\V*{F} = 1$.
	Define
	\begin{align*}
		\gamma_1(F) & := \int\limits_{W^3} \! 
				\rbras*{\E*{(D_{x_1,x_3}^2F)^4}\E*{(D_{x_2,x_3}^2 F)^4} \E*{(D_{x_1}F)^4}\E*{(D_{x_2}F)^4}}^{\frac{1}{4}} 
				\mu^3(\id (x_1, x_2, x_3))\\
		\gamma_2(F) & := \int\limits_{W^3} \! 
				\rbras*{\E*{(D_{x_1,x_3}^2 F)^4}\E*{(D_{x_2,x_3}^2 F)^4}}^{\frac{1}{2}}
				\mu^3(\id (x_1, x_2, x_3))\\
		\gamma_{3,N}(F) & := \int\limits_{W}
				\Eabs*{D_xF}^3
				\mu(\id x)
	\end{align*}
	and let $Z$ be a standard Gaussian random variable,
	then
	\begin{align}\label{eq:malliavin-gauss-limit}
		\dist_W(F,Z) \leq 2 \sqrt{\gamma_1(F)} + \sqrt{\gamma_2(F)} + \gamma_{3,N}(F),
	\end{align}
	where $\dist_W$ denotes the Wasserstein-distance.
\end{theorem}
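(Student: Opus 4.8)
The plan is to deduce the stated inequality from the general Malliavin--Stein bound on the Poisson space, which I would not reprove but rather invoke as the starting point. For a centered, unit-variance Poisson functional $F \in \dom{D}$ one combines Stein's characterization of the Gaussian law with the integration-by-parts formula $\E*{F g(F)} = \E*{\int_W D_x g(F) \rbras*{-D_x L^{-1} F} \mu(\id x)}$, where $L^{-1}$ denotes the pseudo-inverse of the Ornstein--Uhlenbeck generator. This yields
\begin{align*}
	\dist_W(F,Z) \leq \Eabs*{1 - \int_W D_x F \rbras*{-D_x L^{-1}F} \mu(\id x)} + \int_W \E*{(D_x F)^2 \abs*{D_x L^{-1}F}} \mu(\id x).
\end{align*}
The whole task then reduces to estimating these two terms by the quantities $\gamma_1$, $\gamma_2$ and $\gamma_{3,N}$.

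First I would treat the second term, which produces $\gamma_{3,N}$. Using the Mehler representation of $-D_x L^{-1}F$ as a mixed average of difference operators acting on thinned configurations, together with the resulting contraction property $\E*{\abs*{D_x L^{-1}F}^p} \leq \E*{\abs*{D_x F}^p}$ for $p \geq 1$, a single application of H\"older's inequality turns $\E*{(D_x F)^2 \abs*{D_x L^{-1}F}}$ into $\E*{\abs*{D_x F}^3}$, and integrating over $x$ gives exactly $\gamma_{3,N}(F)$.

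The core of the argument is the first term. Since $\E*{\int_W D_x F \rbras*{-D_x L^{-1}F}\mu(\id x)} = \V*{F} = 1$, Jensen's inequality bounds it by the square root of $\V*{\int_W D_x F \rbras*{-D_x L^{-1}F}\mu(\id x)}$. To control this variance I would apply the Poincar\'e inequality to the random variable $\int_W D_x F \rbras*{-D_x L^{-1}F}\mu(\id x)$ itself, which introduces a first-order difference $D_z$ of the integrand. Expanding $D_z$ by the discrete product rule forces second-order difference operators $D^2_{x,z}F$ and $D^2_{x,z}L^{-1}F$ to appear. After repeatedly invoking Cauchy--Schwarz, the Mehler contraction estimates to replace every $L^{-1}$-derivative by the corresponding plain derivative of $F$, and the Mecke formula \eqref{eq:mecke} to turn sums over $\eta$ into Lebesgue-type integrals, the variance splits into two pieces bounded respectively by $\gamma_1(F)$ and $\gamma_2(F)$; collecting the $\sqrt{\cdot}$ prefactors produces the coefficients $2$ and $1$ in the claimed bound.

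The main obstacle is precisely this last step: eliminating the implicit operator $L^{-1}$ in favour of the explicit fourth moments of $D F$ and $D^2 F$. This requires carefully combining Mehler's formula with Minkowski's integral inequality so that $L^{-1}$ can be pulled out of the fourth-moment expressions without destroying the factorized structure of the integrands, and it is the only place where the specific form of $\gamma_1$ and $\gamma_2$ --- with their fourth powers and the shared index $x_3$ coupling the two second-order differences --- is forced. Since the statement is an almost verbatim rephrasing of the main theorem of \cite{LastPeccatiSchulte2016}, in the paper itself I would simply cite that reference and the textbook version in \cite[Theorem 2.13]{LastPenrose2018} rather than reproduce this computation.
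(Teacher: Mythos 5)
Your proposal is correct and matches the paper exactly: the paper does not prove this theorem either, but imports it verbatim from \cite{LastPeccatiSchulte2016} (see also \cite[Theorem 2.13]{LastPenrose2018}), which is precisely what you conclude you would do. Your sketch of the underlying argument --- the Malliavin--Stein bound, the Mehler/contraction estimate $\E*{\abs*{D_xL^{-1}F}^p}\leq\E*{\abs*{D_xF}^p}$ yielding $\gamma_{3,N}$, and the Poincar\'e-inequality control of $\V*{\int_W D_xF(-D_xL^{-1}F)\mu(\id x)}$ producing $\gamma_1$ and $\gamma_2$ --- is a faithful summary of the cited proof.
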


In the proof of Theorem \ref{thm:poisson-limit} we use the analogue of Theorem \ref{thm:malliavin-gauss-limit} for Poisson approximation from \cite{Grygierek2019b}:

\begin{theorem}\label{thm:malliavin-poisson-limit}
	Let $F \in \dom{D}$ be an $\NNN$-valued Poisson functional.
	Define, 
	\begin{align*}
		\gamma_{3,P}(F) & := \int\limits_{W} \rbras*{\Eabs*{D_xF(D_xF-1)}^2}^\frac{1}{2} \rbras*{\Eabs*{D_xF}^2}^\frac{1}{2} \mu(\id x),
	\end{align*}
	and let $\cP(\theta)$ be a Poisson distributed random variable with expectation and variance $\theta > 0$.
	Then
	\begin{align}\label{eq:malliavin-poisson-limit}
		\dist_{TV}\rbras*{F,\cP(\theta)} \leq \frac{1-e^{-\theta}}{\theta} \rbras*{2\sqrt{\gamma_1(F)} + \sqrt{\gamma_2(F)} + \frac{\gamma_{3,P}(F)}{\theta} + \abs*{\E*{F} - \theta} + \abs*{\V*{F} - \theta}},
	\end{align}
	where $\dist_{TV}$ denotes the total variation distance.
\end{theorem}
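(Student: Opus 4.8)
The plan is to combine the Chen--Stein method for Poisson approximation with the Malliavin calculus of the difference operator $D$, mirroring the second-order Poincar\'e strategy behind Theorem~\ref{thm:malliavin-gauss-limit}. First I would invoke the Chen--Stein reduction: for each $A \subseteq \NNN$ let $g_A$ solve the Stein equation $\theta g(n+1) - n g(n) = \1\cbras*{n \in A} - \Prob*{\cP(\theta) \in A}$, whose classical solution bounds give $\sup_n \abs*{g_A(n+1) - g_A(n)} \le \tfrac{1-e^{-\theta}}{\theta}$ together with a bound on its second difference that gains an additional factor of order $\theta^{-1}$. Taking the supremum over $A$ yields
\begin{align*}
	\dist_{TV}\rbras*{F,\cP(\theta)} = \sup_{A \subseteq \NNN} \abs*{\E*{\theta g_A(F+1) - F g_A(F)}},
\end{align*}
which is where the prefactor $\tfrac{1-e^{-\theta}}{\theta}$ originates.

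Writing $\Delta g(n) := g(n+1) - g(n)$ and $\bar F := F - \E*{F}$, I would use the algebraic identity $\theta g(F+1) - F g(F) = \theta\,\Delta g(F) + (\theta - \E*{F})\,g(F) - \bar F\,g(F)$ and treat $\E*{\bar F\,g(F)}$ by the Malliavin covariance identity, with $L^{-1}$ the inverse Ornstein--Uhlenbeck generator,
\begin{align*}
	\E*{\bar F\,g(F)} = \E*{\int\limits_W D_x g(F)\,\rbras*{-D_x L^{-1}\bar F}\,\mu(\id x)}.
\end{align*}
Since $F$ is $\NNN$-valued, $D_x g(F) = g(F + D_xF) - g(F)$, and a telescoping expansion splits this into a principal part $D_xF\cdot\Delta g(F)$ and a remainder built from second differences of $g$ that vanishes whenever $D_xF \in \cbras*{0,1}$. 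Bounding these second differences by the Stein factors and applying Cauchy--Schwarz in $x$, the remainder is controlled by $\tfrac{1}{\theta}\int_W \rbras*{\Eabs*{D_xF(D_xF-1)}^2}^{1/2}\rbras*{\Eabs*{D_x L^{-1}\bar F}^2}^{1/2}\mu(\id x)$; the semigroup contraction estimate $\Eabs*{D_x L^{-1}\bar F}^2 \le \Eabs*{D_xF}^2$ then replaces the weighted derivative by $D_xF$ and produces exactly $\tfrac{1}{\theta}\gamma_{3,P}(F)$.

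For the principal part I would set $S := \int_W D_xF\,\rbras*{-D_x L^{-1}\bar F}\,\mu(\id x)$, so that the covariance identity applied with $G = F$ gives $\E*{S} = \V*{F}$. Collecting terms, the surviving contribution is $\E*{\Delta g(F)\rbras*{\theta - S}} + (\theta - \E*{F})\E*{g(F)}$. The first summand is bounded, using $\abs*{\Delta g} \le \tfrac{1-e^{-\theta}}{\theta}$ and $\E*{\abs*{\theta - S}} \le \abs*{\theta - \V*{F}} + \sqrt{\V*{S}}$, by $\tfrac{1-e^{-\theta}}{\theta}\rbras*{\abs*{\V*{F}-\theta} + \sqrt{\V*{S}}}$, giving the summand $\abs*{\V*{F}-\theta}$. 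The mean-mismatch term is absorbed into $\abs*{\E*{F}-\theta}$ (equivalently, comparing the two Poisson laws $\cP(\E*{F})$ and $\cP(\theta)$ costs at most $\abs*{\E*{F}-\theta}$ in total variation).

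The main obstacle is the last variance $\V*{S}$, that is, estimating the fluctuations of the random inner product $S = \int_W D_xF\,(-D_x L^{-1}\bar F)\,\mu(\id x)$ around its mean $\V*{F}$. This is precisely the technical core of the second-order Poincar\'e inequality of \cite{LastPeccatiSchulte2016} underlying Theorem~\ref{thm:malliavin-gauss-limit}: one applies the Poincar\'e inequality to $S$, commutes $D$ past the Ornstein--Uhlenbeck semigroup and bounds the $L^{-1}$-weighted derivatives by ordinary first and second difference operators $D_xF$ and $D^2_{x,y}F$, and then estimates the resulting triple integrals by Cauchy--Schwarz and the fourth-moment integrands. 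This controls $\sqrt{\V*{S}}$ by the first two summands $2\sqrt{\gamma_1(F)} + \sqrt{\gamma_2(F)}$ of \eqref{eq:malliavin-poisson-limit}. Assembling the three contributions and factoring out $\tfrac{1-e^{-\theta}}{\theta}$ gives the claimed bound; the remaining work is the bookkeeping of domain and integrability conditions guaranteeing that every integration-by-parts step is licit for $F \in \dom{D}$.
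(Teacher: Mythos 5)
The paper does not prove this statement: Theorem \ref{thm:malliavin-poisson-limit} is imported verbatim from \cite{Grygierek2019b}, which in turn combines Peccati's Chen--Stein bound for Poisson functionals with the second-order Poincar\'e machinery of \cite{LastPeccatiSchulte2016}. Your sketch reconstructs exactly that argument --- Stein equation and its magic factors, the integration-by-parts identity via $-D_xL^{-1}\bar F$, the telescoping split whose remainder is governed by $D_xF(D_xF-1)$ and the contraction $\E|D_xL^{-1}\bar F|^2 \le \E|D_xF|^2$, and the control of $\V(S)$ by $4\gamma_1+\gamma_2$ --- so it is the correct route and essentially the same one as the cited source; the only point to tidy in a full write-up is the bookkeeping showing that the mean-mismatch term $(\theta-\E{F})\E{g_A(F)}$ really carries the prefactor $\tfrac{1-e^{-\theta}}{\theta}$ rather than just the cruder bound $\norm{g_A}_\infty\le 1$.
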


\subsection{Simplicial Complexes}\label{sec:prelim:simcomp}

Fix an arbitrary underlying set $S$.  An (abstract) simplicial complex is a collection $\Delta$ of finite subsets of $S$, that is closed under taking subsets, i.e. 
\begin{align*}
	\forall \frF \in \Delta: \forall \frL \subset \frF : \frL \in \Delta.
\end{align*}

The non-empty elements of $\Delta$ are called faces or simplices. Additionally every non-empty subset $\frL \subseteq \frF$ of a face $\frF \in \Delta$ is called a face of $\frF$, thus the faces of the faces of $\Delta$ are faces of $\Delta$ themselves.

The number of $i$-dimensional faces of $\Delta$ will be denoted by $f_i(\Delta)$ and we note that the vector $(f_{-1}(\Delta), f_0(\Delta), f_1(\Delta), \ldots)$ is the $\bff$-vector of $\Delta$, where $f_{-1}(\Delta)$ is the Euler characteristics of $\Delta$ and $f_0(\Delta)$ denotes the number of vertices of $\Delta$, see \cite[Definition 8.16, p.245]{Ziegler1995} for more details.

The Vietoris-Rips complex is an example of a simplicial complex that arises naturally from metric spaces.

\begin{definition}[Vietoris-Rips complex]\label{def:vietoris-rips-complex}
	Let $X = (X,\dist)$ be a metric space (usually a locally finite subset of $\RR^d$) and $\delta \in (0,\infty)$.
	The Vietoris-Rips complex of $(X,\dist)$ with respect to $\delta$ (and the underlying metric $\dist$) is the abstract simplicial complex on vertex set $X$ whose $k$-simplices are all subsets $\cbras*{x_0,\ldots,x_k} \subseteq X$ with $\dist(x_i,x_j) \leq \delta$ for all $i,j \in \cbras*{0,\ldots,k}$.
\end{definition}

For more details on simplicial complexes, we refer the reader to the books \cite{Stanley1996,Munkres1984,Ziegler1995}.

\section{Moment-Decomposition for \texorpdfstring{$U$-}{U-}statistics}\label{sec:decomp}

Let $\eta$ be a Poisson point process on $\XX$ with intensity measure $\mu$. For $n \geq 1$ we consider the $U$-statistics $F$ of order $n$ with symmetric and measurable kernel $h:\XX^n \rightarrow \RR$ given by
\begin{align}\label{eq:U-staticstic}
	F := \frac{1}{n!} \sum\limits_{(y_0,\ldots,y_{n-1}) \in \eta^n_{\neq}} h(y_0,\ldots,y_{n-1}).
\end{align}

To prove our main results, we will introduce the following helpful decomposition of the $p$-th power $F^p$ for $p \in \cbras*{2,3,4}$, that allows us to apply Mecke's formula \eqref{eq:mecke} to each term in the decomposition and derive the corresponding moments of $F$.

The main idea is to split the summation over $(\eta^n_{\neq})^p$ into multiple sums over sets that are diagonal free and therefore satisfy the pairwise distinct condition needed for the index set in Mecke's formula \eqref{eq:mecke}. Identifying variables in the tuples that are assumed to be equal and accounting for all possible combinations and permutations we reduce the different cases using the symmetry $h$ to the terms given in the lemmas below.

Note that the constants $\bC(n,\cdot) \in (0,\infty)$ are combinatorially constants that do only depend on $n$ and the given indices of the corresponding sum.

\begin{notation}\label{not:ivar}
	To shorten our notation we will use $\ivarr ymn$ instead of $y_m,\ldots,y_{m+n-1}$ and $\ivar yn$ instead of $y_0,\ldots,y_{n-1}$. Further $\ivarr yn0$ resp. $\ivar y0$ indicates that no $y$-variables are used.
\end{notation}

\begin{lemma}\label{lem:decomp:2}
	For all $n \geq 1$ there existing explicit constants $\bC(n,r) \in (0,\infty)$ only depending on $n$ and $r$, such that the second moment of $F$ is given by
	\begin{align*}
		\E*{F^2} = \sum\limits_{r=0}^n \bC(n,r) 
			\smashoperator{\int\limits_{\XX^{2n-r}}} h(\ivar yn) h(\ivar yr , \ivar z{n-r}) \id \mu^{n}(\ivar yn) \id \mu^{n-r}(\ivar z{n-r}),
	\end{align*}
	where $r$ is the number of variables that are shared in both kernel functions in the integral.
\end{lemma}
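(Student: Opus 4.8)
The plan is to expand $F^2$ directly from its definition as a double sum and then reorganise that double sum according to how many points the two index tuples share. Writing
\begin{align*}
	F^2 = \frac{1}{(n!)^2} \sum\limits_{(\ivar yn) \in \eta^n_{\neq}} \sum\limits_{(\ivar zn) \in \eta^n_{\neq}} h(\ivar yn)\, h(\ivar zn),
\end{align*}
the obstruction to a direct application of Mecke's formula \eqref{eq:mecke} is that the combined family of $2n$ indices need not be pairwise distinct: although each tuple is diagonal-free on its own, a point $y_i$ may coincide with a point $z_j$. I would therefore partition the double sum according to the number $r \in \cbras*{0,\ldots,n}$ of coincidences between the $y$- and the $z$-tuple, so that every summand in the block labelled by $r$ involves exactly $2n-r$ pairwise distinct points and hence becomes amenable to \eqref{eq:mecke}.

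For a fixed $r$ I would encode the coincidence pattern of an ordered pair $(\ivar yn),(\ivar zn)$ by the triple $(A,B,\sigma)$ consisting of the set $A \subseteq \cbras*{0,\ldots,n-1}$ of positions $i$ with $y_i \in \cbras*{z_0,\ldots,z_{n-1}}$, the corresponding set $B$ of matched positions in the $z$-tuple, and the bijection $\sigma\colon A \to B$ pairing equal points. Since both tuples are diagonal-free this triple is uniquely determined and satisfies $\abs{A}=\abs{B}=r$; conversely, ranging over all admissible triples and, for each, over the $2n-r$ free distinct points recovers the block exactly once, so this is a genuine partition. The number of admissible triples is $\binom{n}{r}^2 r!$, and the symmetry of the kernel $h$ lets me relabel each configuration to the canonical form in which the shared points occupy the first $r$ slots of both kernels, so that every triple contributes the identical expression $h(\ivar yn)\, h(\ivar yr, \ivar z{n-r})$.

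Collecting these observations yields
\begin{align*}
	F^2 = \frac{1}{(n!)^2} \sum\limits_{r=0}^n \binom{n}{r}^2 r! \sum\limits_{(\ivar yn, \ivar z{n-r}) \in \eta^{2n-r}_{\neq}} h(\ivar yn)\, h(\ivar yr, \ivar z{n-r}),
\end{align*}
where each inner sum now runs over pairwise distinct points. Taking expectations and applying the multivariate Mecke formula \eqref{eq:mecke} to each term — legitimate because $h$ carries no dependence on $\eta$, so that the inner expectation on the right-hand side of \eqref{eq:mecke} is the deterministic value of $h$ — turns each inner expected sum into the integral $\int_{\XX^{2n-r}} h(\ivar yn)\, h(\ivar yr, \ivar z{n-r}) \id\mu^{2n-r}$, and setting $\bC(n,r) := \binom{n}{r}^2 r!/(n!)^2 = 1/\rbras*{r!\,((n-r)!)^2} \in (0,\infty)$ gives the claim. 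The main obstacle is precisely the combinatorial bookkeeping of the middle step: I must verify that classifying ordered pairs by the triple $(A,B,\sigma)$ is a partition with no double counting and no omission, and that the symmetry of $h$ really collapses all $\binom{n}{r}^2 r!$ triples onto the single canonical integral without silently identifying two of the remaining free variables and thereby corrupting the diagonal-free index set required by \eqref{eq:mecke}.
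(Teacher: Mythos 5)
Your proposal is correct and follows essentially the same route as the paper: both decompose the off-diagonal double sum over $\eta^n_{\neq}\times\eta^n_{\neq}$ by the number $r$ of coincidences between the two tuples (your triples $(A,B,\sigma)$ are just an explicit encoding of the paper's injective maps $\psi\in\Psi(Z(n),Y(n))$ with $r=\abs{\image\psi\cap Y(n)}$), use the symmetry of $h$ to collapse each class onto the canonical configuration, and then apply Mecke's formula to each diagonal-free block. Your explicit count $\binom{n}{r}^2 r!$ of configurations agrees with the paper's class cardinality $\abs{\psi_r}$, and your constant $\bC(n,r)=1/\rbras*{r!\,((n-r)!)^2}$ is consistent with the paper's remark that $\bC(n,0)=1/n!^2$.
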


Note that $\bC(n,0) = \frac{1}{n!^2}$ and the corresponding integral for $r = 0$ equals $(n!\E*{F})^2$, which directly yields the following representation for the variance:

\begin{corollary}\label{cor:decomp:var}
	For all $n \geq 1$ there existing explicit constants $\bC(n,r) \in (0,\infty)$ only depending on $n$ and $r$, such that the variance of $F$ is given by
	\begin{align*}
		\V*{F} = \sum\limits_{r=1}^{n} \bC(n,r) 
			\smashoperator{\int\limits_{\XX^{2n-r}}} h(\ivar yn) h(\ivar yr, \ivar z{n-r}) \id \mu^{n}(\ivar yn) \mu^{n-r}(\ivar z{n-r}).
	\end{align*}
\end{corollary}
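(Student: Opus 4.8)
The plan is to read the variance off directly from the second-moment decomposition in Lemma \ref{lem:decomp:2}, exploiting the identity $\V*{F} = \E*{F^2} - \rbras*{\E*{F}}^2$ and recognizing the subtracted square as precisely the $r=0$ summand of that decomposition.

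First I would apply Lemma \ref{lem:decomp:2} to expand $\E*{F^2} = \sum_{r=0}^n \bC(n,r) \int_{\XX^{2n-r}} h(\ivar yn) h(\ivar yr, \ivar z{n-r}) \id\mu^n \id\mu^{n-r}$, and then isolate the term $r=0$. In this case the two kernels share no variables, so the integrand splits and the $2n$-fold integral factorizes as $\rbras*{\int_{\XX^n} h(\ivar yn) \id\mu^n(\ivar yn)}^2$.

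Next, to identify this factor with the expectation, I would invoke the multivariate Mecke formula \eqref{eq:mecke} applied to the kernel $h$, which here does not depend on $\eta$. This yields $\E*{F} = \tfrac{1}{n!}\int_{\XX^n} h(\ivar yn) \id\mu^n(\ivar yn)$, hence $\int_{\XX^n} h \id\mu^n = n!\,\E*{F}$. Combined with $\bC(n,0) = \tfrac{1}{n!^2}$, this shows that the $r=0$ summand equals $\tfrac{1}{n!^2}\rbras*{n!\,\E*{F}}^2 = \rbras*{\E*{F}}^2$. Subtracting it from $\E*{F^2}$ removes exactly this term and leaves $\V*{F} = \sum_{r=1}^{n} \bC(n,r) \int_{\XX^{2n-r}} h(\ivar yn) h(\ivar yr, \ivar z{n-r}) \id\mu^n \mu^{n-r}$, which is the claim, with the same constants $\bC(n,r)$.

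I do not expect a genuine obstacle here: the combinatorial work is already carried out in the proof of Lemma \ref{lem:decomp:2}, and the corollary is purely a matter of peeling off one term. The only points that require a line of justification are that the $r=0$ integral truly factorizes (immediate, since that summand corresponds to two disjoint index blocks) and that the associated constant is $1/n!^2$, both of which are transparent from how the decomposition is constructed.
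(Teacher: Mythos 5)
Your proposal is correct and follows essentially the same route as the paper: the paper likewise obtains the corollary by noting that $\bC(n,0) = \tfrac{1}{n!^2}$, that the $r=0$ integral in Lemma \ref{lem:decomp:2} factorizes into $\rbras*{n!\,\E*{F}}^2$ (with $\E*{F} = \tfrac{1}{n!}\int_{\XX^n} h \id \mu^n$ coming from Mecke's formula), and then subtracting $\rbras*{\E*{F}}^2$ from the second-moment decomposition. No gaps.
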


For the third and fourth moment we derive similar representations involving the product of three resp. four kernel functions.

\begin{lemma}\label{lem:decomp:3}
	For all $n \geq 1$ there existing explicit constants $\bC(n,\cdot) \in (0,\infty)$ only depending on the given values, such that the third moment of $F$ is given by
	\begin{align*}
		& \E*{F^3} = \sum\limits_{r=0}^n \sum\limits_{s_Y = 0}^n \smashoperator[r]{\sum\limits_{s_Z = 0}^{\min\rbras*{\substack{n-r\phantom{{}_Y} \\ n-s_Y}}}} \bC(n,r,s_Y,s_Z)\\
			& \smashoperator[r]{\int\limits_{\XX^{3n-r-s}}} h(\ivar yn) h(\ivar yr, \ivar z{n-r}) h(\ivar y{s_Y},\ivar z{s_Z},\ivar w{n-s}) \id \mu^n(\ivar yn) \id \mu^{n-r}(\ivar z{n-r}) \id \mu^{n-s}(\ivar w{n-s}),
	\end{align*}
	where $s := s_Y + s_Z$ and the indices of the sums are denoting the number of variables that are shared in multiple kernel functions in the integral. To shorten our notation, we denote by $\min\rbras*{\substack{a \\ b}}$ the minimum of $a$ and $b$.
\end{lemma}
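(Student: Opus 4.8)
The plan is to follow the same strategy as for the second moment (Lemma \ref{lem:decomp:2}), but now keeping track of the coincidences among three copies of the index set rather than two. Expanding the cube directly,
\begin{align*}
	F^3 = \frac{1}{(n!)^3} \sum_{(\ivar yn) \in \eta^n_{\neq}} \sum_{(\ivar zn) \in \eta^n_{\neq}} \sum_{(\ivar wn) \in \eta^n_{\neq}} h(\ivar yn)\, h(\ivar zn)\, h(\ivar wn),
\end{align*}
one sees that the combined index is a $3n$-tuple which is diagonal-free within each of the three blocks, but may exhibit coincidences across the blocks, so that Mecke's formula \eqref{eq:mecke} cannot be applied to $\E*{F^3}$ directly.

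First I would partition the triple sum according to the coincidence pattern of the $3n$ slots, that is, into the blocks of mutually equal points. Because each of the three tuples is internally diagonal-free, every block contains at most one slot from each of the $Y$-, $Z$- and $W$-tuple; hence the blocks have size one, two or three, and the three-element blocks are precisely the points shared by all three kernels. For a fixed pattern the restricted sum runs over pairwise distinct points, one representative per block, so Mecke's formula \eqref{eq:mecke} now applies and turns it into an integral against $\mu^{m}$, with $m$ the number of blocks.

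Next I would invoke the symmetry of $h$ to relabel, inside each kernel, the shared arguments into the leading positions, exactly as in the two-kernel case. After this normalization every pattern with a prescribed number $r$ of $Y$--$Z$ coincidences, $s_Y$ of $Y$-slots reused in the third kernel, and $s_Z$ of newly introduced $Z$-slots reused in the third kernel collapses onto the single canonical integrand $h(\ivar yn)\, h(\ivar yr,\ivar z{n-r})\, h(\ivar y{s_Y},\ivar z{s_Z},\ivar w{n-s})$ with $s = s_Y + s_Z$, integrated over $\XX^{3n-r-s}$ against $\mu^{n}\otimes\mu^{n-r}\otimes\mu^{n-s}$ (the $3n-r-s$ distinct points being the $n$ points of $Y$, the $n-r$ new $Z$-points and the $n-s$ new $W$-points). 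The ranges $0 \le r \le n$, $0 \le s_Y \le n$ and $0 \le s_Z \le \min(n-r,\, n-s_Y)$ are forced by the requirements that the reused $Z$-arguments be taken from the $n-r$ genuinely new $Z$-points and that the third kernel possess only $n$ slots, so that $s_Y + s_Z \le n$. Collecting all patterns that normalize to the same canonical integrand, the multiplicity is a purely combinatorial constant $\bC(n,r,s_Y,s_Z) \in (0,\infty)$ depending only on the displayed indices, which yields the stated formula.

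The main obstacle is the combinatorial bookkeeping of these constants, and in particular the three-way overlaps: a single $Y$-point may simultaneously be identified with a $Z$-slot and a $W$-slot, so the counts $r$ and $s_Y$ are entangled with the triple coincidences, and one must verify that the symmetry-normalization lands every admissible pattern on exactly one canonical integrand without double counting. Making $\bC(n,r,s_Y,s_Z)$ precise — counting the ordered choices of coinciding slots together with the permutations of $h$ used to move them into canonical position, and then dividing by the $(n!)^3$ prefactor — is the step requiring care; the strict positivity $\bC(n,r,s_Y,s_Z) > 0$ is automatic, since each admissible triple $(r,s_Y,s_Z)$ is realized by at least one coincidence pattern.
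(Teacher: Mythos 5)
Your argument is essentially the paper's: the paper obtains $F^3$ iteratively, multiplying the already-decomposed representation \eqref{eq:decomp:F2} of $F^2$ by $F$ and classifying the injections $\psi \in \Psi(W(n), Y(n)\cup Z(n-r))$ by the pair $(s_Y,s_Z)$, whereas you expand the cube in one step and classify the coincidence patterns of all $3n$ slots directly, but the underlying mechanism (reduce to diagonal-free index sets, apply Mecke's formula, collapse by symmetry of $h$) is identical. The caveat you flag --- that patterns with the same $(r,s_Y,s_Z)$ but different numbers of three-way coincidences need not normalize to the same canonical integrand for a general symmetric kernel --- is a genuine subtlety which the paper's own (very terse) proof passes over just as silently; it is harmless for the application at hand, where the decomposition is only used to produce upper bounds via the factorization and argument-removal inequalities.
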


\begin{lemma}\label{lem:decomp:4}
	For all $n \geq 1$ there existing explicit constants $\bC(n,\cdot) \in (0,\infty)$ only depending on the given values, such that the fourth moment of $F$ is given by
	\begin{align*}
		& \E*{F^4} = \sum\limits_{r=0}^n 
			\sum\limits_{s_Y = 0}^n \sum\limits_{s_Z = 0}^{\min\rbras*{\substack{n-r\phantom{{}_Y} \\ n-s_Y}}} 
			\sum\limits_{m_Y = 0}^n \sum\limits_{m_Z = 0}^{\min\rbras*{\substack{n-r\phantom{{.}_Y} \\ n-m_Y}}} \smashoperator[r]{\sum\limits_{m_W = 0}^{\min\rbras*{\substack{n-s\phantom{{.}_{Y}-m_Z} \\ n-m_Y-m_Z}}}}  \bC(n,r,s_Y,s_Z,m_Y,m_Z,m_W)\\
			& \smashoperator[r]{\int\limits_{\XX^{4n-r-s-m}}} h(\ivar yn)h(\ivar yr,\ivar z{n-r})h(\ivar y{s_Y},\ivar z{s_Z},\ivar w{n-s})h(\ivar y{m_Y},\ivar z{m_Z},\ivar w{m_W},\ivar u{n-m})\\
			& \qquad \qquad \id \mu^n(\ivar yn) \id \mu^{n-r}(\ivar z{n-r}) \id \mu^{n-s}(\ivar w{n-s}) \id \mu^{n-m}(\ivar u{n-m}),
	\end{align*}
	where $s := s_Y + s_Z$, $m := m_Y + m_Z + m_W$ and the indices of the sums are denoting the number of variables that are shared in multiple kernel functions in the integral. To shorten our notation, we denote by $\min\rbras*{\substack{a \\ b}}$ the minimum of $a$ and $b$.
\end{lemma}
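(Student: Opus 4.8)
The plan is to mimic the proofs of Lemmas~\ref{lem:decomp:2} and~\ref{lem:decomp:3}, now carrying four copies of the kernel. First I would expand
\[
	F^4 = \frac{1}{n!^4} \sum_{(Y^{(1)},Y^{(2)},Y^{(3)},Y^{(4)}) \in (\eta^n_{\neq})^4} h(Y^{(1)})\,h(Y^{(2)})\,h(Y^{(3)})\,h(Y^{(4)}),
\]
where each $Y^{(i)} = (y^{(i)}_0,\ldots,y^{(i)}_{n-1})$ runs over the diagonal-free tuples $\eta^n_{\neq}$ and the four tuples range independently. Because the $n$ entries inside a single tuple are already pairwise distinct, \emph{every} coincidence among the $4n$ involved points is a cross-tuple coincidence. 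The first step is therefore to partition the outer summation according to the coincidence pattern that the four tuples induce on their $4n$ indices.

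For a fixed pattern I would fix a canonical labelling of the distinct points: the $n$ points of $Y^{(1)}$ are the $y$-variables; the points of $Y^{(2)}$ that do \emph{not} already appear in $Y^{(1)}$ are the $z$-variables (there are $n-r$ of them, with $r$ the number of shared points); the points of $Y^{(3)}$ not appearing in $Y^{(1)}$ or $Y^{(2)}$ are the $w$-variables (there are $n-s$ of them); and the genuinely new points of $Y^{(4)}$ are the $u$-variables ($n-m$ of them). Splitting the shared points of $Y^{(3)}$ into the $s_Y$ coinciding with $y$'s and the $s_Z$ coinciding with $z$'s gives $s=s_Y+s_Z$, and likewise splitting the shared points of $Y^{(4)}$ into $m_Y,m_Z,m_W$ (coinciding with $y$'s, $z$'s, $w$'s respectively) gives $m=m_Y+m_Z+m_W$. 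The total number of distinct points is then $4n-r-s-m$, matching the integration domain $\XX^{4n-r-s-m}$.

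Within a fixed pattern the distinct points are pairwise distinct, so the multivariate Mecke formula \eqref{eq:mecke} converts the corresponding subsum into the integral of $h(Y^{(1)})h(Y^{(2)})h(Y^{(3)})h(Y^{(4)})$ against $\mu^{4n-r-s-m}$. Exploiting the symmetry of $h$, I would then permute the arguments of each factor so that the shared variables always occupy the leading positions, bringing the four kernels into the canonical shape $h(\ivar yn)$, $h(\ivar yr,\ivar z{n-r})$, $h(\ivar y{s_Y},\ivar z{s_Z},\ivar w{n-s})$, $h(\ivar y{m_Y},\ivar z{m_Z},\ivar w{m_W},\ivar u{n-m})$. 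All patterns sharing the same shape $(r,s_Y,s_Z,m_Y,m_Z,m_W)$ now contribute identical integrals, and collecting their number together with the prefactor $n!^{-4}$ and the permutation factors defines the strictly positive, finite constant $\bC(n,r,s_Y,s_Z,m_Y,m_Z,m_W)$; positivity and finiteness are clear since each constant is a nonzero count of admissible coincidence patterns.

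The remaining and genuinely delicate point is the bookkeeping of the summation ranges and the absence of double counting. I would argue that the canonical labelling is unambiguous — a point lying in several tuples is assigned to the earliest group containing it ($y$ before $z$ before $w$) — so each coincidence class is hit exactly once. The ranges then follow from two competing constraints: a group cannot share more points than it contains (forcing $s_Z\le n-r$, $m_Z\le n-r$, $m_W\le n-s$), and a tuple has only $n$ slots for its shared points (forcing $s_Y+s_Z\le n$, i.e.\ $s_Z\le n-s_Y$, and $m_Y+m_Z+m_W\le n$, i.e.\ $m_Z\le n-m_Y$ and $m_W\le n-m_Y-m_Z$). These two families of constraints produce exactly the nested minima in the stated sums, and this interplay — rather than the application of Mecke, which is routine once the pattern is fixed — is where I expect the main difficulty to lie. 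A clean alternative is to derive the formula inductively from Lemma~\ref{lem:decomp:3} by writing $F^4=F\cdot F^3$ and analysing how the fourth tuple shares with the three already-labelled groups.
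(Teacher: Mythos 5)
Your proposal is correct and follows essentially the same route as the paper: the paper builds the decomposition iteratively ($F^4 = F^3\cdot F$, tracking which variables of the new tuple are reused from $Y(n)$, $Z(n-r)$, $W(n-s)$ via injective maps and equivalence classes), which is precisely the ``clean alternative'' you mention at the end, while your one-shot partition of $(\eta^n_{\neq})^4$ by coincidence pattern is the same combinatorial decomposition presented non-inductively. Your identification of the canonical labelling, the two families of constraints producing the nested minima, and the origin of the constants all match the paper's argument.
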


\begin{proof}[Proof: Moment-Decomposition for $U$-statistics]
	We denote the sets of $n$ variables enumerated from $0$ to $n-1$ with capital letters
	\begin{align*}
		\begin{split}
			Y(n) & = \cbras*{y_0,\ldots,y_{n-1}},\\
			Z(n) & = \cbras*{z_0,\ldots,z_{n-1}},
		\end{split}
		\begin{split}
			W(n) & = \cbras*{w_0,\ldots,w_{n-1}},\\
			U(n) & = \cbras*{u_0,\ldots,u_{n-1}},
		\end{split}
	\end{align*}
	and for $X_1,X_2 \subseteq Y(n) \cup Z(n) \cup W(n) \cup U(n)$ we denote by $\Psi(X_1,X_2)$ the set of all injective maps $\psi:X_1 \rightarrow X_1 \cup X_2$ such that $\psi(x) = x$ if $x \in X_1$ or $\psi(x) \in X_2$, i.e. it is not allowed to map an element of $X_1$ onto another element of $X_1$ but it is allowed to injectively map any element of $X_1$ to any element of $X_2$.
	
	Note that we will choose $X_1,X_2$ such that every map $\psi \in \Psi(X_1,X_2)$ will represent one possible way to choose the variables in the index set of the summation of the corresponding $p$-th power of $F$. The case $\psi(x) = x$ represents the case that the variable $x$ is not equal to any other variable in $X_2$ and therefore gets mapped onto itself and the case $\psi(x) \in X_2$ represents the case where the variables $x$ and $\psi(x)$ are equal, representing a diagonal in the Cartesian product of the index sets of the $U$-statistic.
	Further we we will use the symmetry of the product and of $h$ to define an equivalence relation on $\Psi(X_1,X_2)$ such that all elements belonging to the same equivalence class yield the same value in the decomposition.
	
	\bigskip
	
	$\mathbf{p = 1}$: Note that the expectation of $F$ can be obtained directly using Mecke's formula \eqref{eq:mecke}, since the index set already consists of pairwise distinct tuples of points, i.e.
	\begin{align*}
		\E*{F} & = \frac{1}{n!} \dsE \sum\limits_{(y_0,\ldots,y_{n-1}) \in \eta^n_{\neq}} h(y_0,\ldots,y_{n-1})\\
			& = \frac{1}{n!} \int\limits_{\XX^n} h(y_0,\ldots,y_{n-1}) \id \mu^n(y_0,\ldots,y_{n-1}).
	\end{align*}
	
	\bigskip
	
	$\mathbf{p = 2}$: For the second moment of $F$ we rewrite the product of the sums as the sums of products renaming the variables in the second factor to obtain:
	\begin{align*}
		F^2 & = \frac{1}{n!^2} \sum\limits_{(y_0,\ldots,y_{n-1}) \in \eta^n_{\neq}}\sum\limits_{(z_0,\ldots,z_{n-1}) \in \eta^n_{\neq}} h(y_0,\ldots,y_{n-1}) h(z_0,\ldots,z_{n-1})\\
			& = \frac{1}{n!^2} \sum\limits_{(y_0,\ldots,y_{n-1},z_0,\ldots,z_{n-1}) \in \eta^n_{\neq} \times \eta^n_{\neq}} h(y_0,\ldots,y_{n-1}) h(z_0,\ldots,z_{n-1}).
	\end{align*}
	The index set $\eta^n_{\neq} \times \eta^n_{\neq}$ contains tuples of points that allow non distinct choices of points. Therefore we need to decompose the index set into all possible combinations with respect to pairwise distinct choices to apply Mecke's formula \eqref{eq:mecke}.
	This yields index sets of the form $\eta^{2n-r}_{\neq}$, where $r = 0, \ldots, n$ denotes the number of variables $z$ that are equal to another variable $y$, reducing the number of points that are chosen pairwise distinct from $\eta$.
	We have to consider the injective maps $\psi \in \Psi(Z(n),Y(n))$ and note, that we lose one dimension in our index set for every variable $z$ that gets mapped to a variable $y$, since we will reuse an already existing variable.
	We denote the number of reused variables from the set $Y(n)$ by $r = r_Y(\psi) := \abs*{\image \psi \cap Y(n)}$.
	It follows that
	\begin{align*}
		F^2 = \frac{1}{n!^2} \sum\limits_{\psi \in \Psi} \sum\limits_{(y_0,\ldots,y_{n-1}, \image \psi \cap Z(n)) \in \eta^{2n-r}_{\neq}} h(y_0,\ldots,y_{n-1})h(\psi(z_0),\ldots,\psi(z_{n-1})),
	\end{align*}
	where the index will use the variables $y_0,\ldots,y_{n-1}$ and the variables $\image \psi \cap Z(n)$, that are not replaced by variables from $Y(n)$.
	Additionally we use the symmetry of $h$ to see that only the number of reused variables $r$ determines which value the second sum will attain:
	For $r = 0,\ldots, n$ we define the equivalence relation $\psi \sim_{r} \psi'$ if $r_Y(\psi') = r_Y(\psi) = r$ and obtain $n+1$ equivalence classes $[\psi_r] \in \Psi /{\sim}$ that have to be distinguished. For simplicity we will chose the representative $\psi_r$ such that $\psi(z_i) = y_i$ for all $i = 0, \ldots, r-1$, reusing the first $r$ variables in both kernels.
	We denote the cardinality of the equivalence class $[\psi_r]$ by $\abs{\psi_r}$ and observe that $\abs{\psi_r}$ only depends on $n$ and $r$. Thus we define $\bC(n,r) := \tfrac{\abs*{\psi_r}}{n!^2}$, yielding
	\begin{align}\label{eq:decomp:F2}
		F^2 = \sum\limits_{r=0}^{n} \bC(n,r) \sum\limits_{(\ivar yn, \ivar z{n-r}) \in \eta^{2n-r}_{\neq}} h(\ivar yn)h(\ivar yr, \ivar z{n-r}).
	\end{align}
	Using Mecke's formula \eqref{eq:mecke}, the claim of Lemma \ref{lem:decomp:2} is obtained directly.
	
	\bigskip	
	
	$\mathbf{p=3}$:	
	
	Similarly the proof of Lemma \ref{lem:decomp:3} is obtained, starting with the representation of $F^2$ given by \eqref{eq:decomp:F2} and multiplying by $F$. We use the the injective maps $\psi \in \Psi(W(n),Y(n) \cup Z(n-r))$. We denote the number of reused variables from $Y(n)$ resp. $Z(n-r)$ by $s_Y$ resp. $s_Z$ and define the equivalence relation $\psi \sim_{(s_Y,s_Z)} \psi'$ if and only if $s_Y(\psi') = s_Y(\psi) = s_Y$ and $s_Z(\psi') = s_Z(\psi) = s_Z$ to obtain the decomposition of $F^3$ into the summation over index sets of the form $\eta^{3n-r-s_Y-s_Z}_{\neq}$.
	
	\bigskip
	
	$\mathbf{p=4}$:	
	
	Lemma \ref{lem:decomp:4} follows by another iteration of this technique, using the injective maps $\psi \in \Psi(U(n), Y(n) \cup Z(n-r) \cup W(n-s))$ and defining the equivalence relation $\sim_{(m_Y,m_Z,m_W)}$ where $m_Y,m_Z,m_W$ denoting the number of reused variables from $Y(n)$, $Z(n-r)$ resp. $W(n-s)$.
\end{proof}

\section{Proofs of the main results}\label{sec:proof}

For all $n \geq 2$, $d \geq 1$, $\delta > 0$ and all $y_1, \ldots, y_n \in \RR^d$ we define the indicator
\begin{align*}
	\leqdelta*{y_1,\ldots,y_n} := \prod\limits_{i=1}^n \prod\limits_{j=i+1}^n \1\cbras*{\unorm*{y_j - y_i} \leq \delta}.
\end{align*}
where we set $\leqdelta*{y_1,\ldots,y_n} := 1$ for $n \leq 1$.
Additionally we combine these indicator functions with Notation \ref{not:ivar} to shorten our notation throughout this section.

\begin{remark}
	We note, that this indicator function can also be represented using the alternative condition
	\begin{align*}
		\leqdelta*{y_1,\ldots,y_n} := \1\cbras*{\max\limits_{i,j=1}^n \unorm*{y_j-y_i} \leq \delta},
	\end{align*}
	and satisfies the factorization inequality
	\begin{align}\label{eq:leqdelta:factorization}
		\leqdelta*{y_1,\ldots,y_r,y_{r+1}, \ldots y_n} \leq \leqdelta*{y_1,\ldots,y_r} \leqdelta*{y_1, \ldots, y_n}
	\end{align}
	and the argument-removal inequality
	\begin{align}\label{eq:leqdelta:removal}
		\leqdelta*{y_1,\ldots,y_r,y_{r+1}, \ldots y_n} \leq \leqdelta*{y_1,\ldots,y_r}
	\end{align}
	for all $n \geq 2$ and $r \in \cbras*{1,\ldots,n}$.
\end{remark}

Having this notations in place, the $k$-simplex counting functional $F_k$ is a $(k+1)$-order $U$-statistic with measurable and symmetric kernel $\leqdeltaop:W^{k+1} \rightarrow \cbras*{0,1}$ given by
\begin{align*}
	F_k := \frac{1}{(k+1)!} \sum\limits_{(y_0,\ldots,y_k) \in \eta^{k+1}_{\neq}} \leqdelta*{y_0,\ldots,y_k}.
\end{align*}

In the calculation of expectation and variance we will handle boundary effects, using the inner parallel set $W_{-\delta}$ of $W$ that is defined by
\begin{align*}
	W_{-\delta} := \cbras*{x \in W: \uBB{x}{\delta} \subseteq W} = \sbras*{-\tfrac{1}{2}+\delta, +\tfrac{1}{2}-\delta}^d.
\end{align*}
It is important to notice, that
\begin{align*}
	\Lambda_d(W_{-\delta}) = (1-2\delta)^d,
\end{align*}
depends on the dimension $d$ and on $\delta$. Especially, the limit for $d \rightarrow \infty$ is determined by the convergence speed of $\delta$ and has a major influence on our bounds for the expectation and variance.

\begin{remark}\label{rem:delta:convergence}
	We will choose the sequence $(\delta_d)_{d} \in (0,\infty)$ such that $\delta_d \rightarrow 0$ and
	\begin{align*}
		\lim\limits_{d \rightarrow \infty} \Lambda_d(W_{-\delta_d}) = \lim\limits_{d \rightarrow \infty} \rbras*{1-2\delta_d}^d \os{!}{=} 1 = \Lambda_d(W).
	\end{align*}
	Therefore $\delta_d$ has to decrease faster than $\frac{1}{d}$, i.e. we require
	\begin{align*}
		\lim\limits_{d \rightarrow \infty} d \delta_d = 0,
	\end{align*}
	to ensure, that the observation window related factor in the lower and upper bound has the same limit. 
	This condition can be weakened in the Gaussian case to $\lim_{d \rightarrow \infty} d \delta_d < \infty$ without changing the convergence rates presented in Section \ref{sec:proof:limits} below.
	However, if $\lim_{d \rightarrow \infty} d \delta_d = \infty$ rates have to be adjusted for the slower variance bound respecting $(1-2\delta_d)^d \rightarrow 0$ and it has to be ensured, that $\E*{F_k} \rightarrow \infty$ and $\widetilde{F_k} \in \dom{D}$ are still satisfied.
	In the Poisson case, the assumption can be removed completely, as long as the convergence of expectation and variance to the same positive constant is secured otherwise.
\end{remark}

We will use $g(d) \ll f(d)$ to indicate that $g(d)$ is of order at most $f(d)$, i.e.
\begin{align*}
	g(d) \ll f(d) & :\Leftrightarrow g(d) = \cO\rbras*{f(d)}\\
		&  \phantom{:}\Leftrightarrow \exists c > 0, d_0 > 0 : \forall d > d_0 : g(d) \leq c f(d),
\end{align*}
where $c$ and $d_0$ are constants not depending on $d$.

\subsection{Proof of Lemmas \ref{lem:expectation} and \ref{lem:variance}: Expectation and Variance}\label{sec:proof:exp-var}

The proof is divided into three steps, presented here as separate lemmas: First we will use Mecke's formula \eqref{eq:mecke} and integral transformations to obtain a bound for the expectation involving an integral that does only depend on $k$ and $d$. In the second step, we use the same technique combined with the variance decomposition given by Corollary \ref{cor:decomp:var} to obtain a bound for the variance. Finally we will calculate the exact values of the remaining integrals to complete the proof.

\begin{lemma}\label{lem:expectation:step:1}
	For $k \geq 1$ the expected number of $k$-simplices in the random Vietoris-Rips complex $\vietoris^{\infty}(\eta_d,\delta_d)$ is bounded by
	\begin{align*}
		\E*{F_k} & \geq \frac{\Lambda_d(W_{-\delta})}{(k+1)!} t (t\delta^d)^k \cI_{\dsE}(d,k),\\
		\E*{F_k} & \leq \frac{\Lambda_d(W)}{(k+1)!} t (t\delta^d)^k \cI_{\dsE}(d,k),
	\end{align*}
	where $\cI_{\dsE}(d,k)$ denotes the integral
	\begin{align*}
		\cI_{\dsE}(d,k) := \smashoperator{\int\limits_{\uBB 01^k}} \leqone*{y_1,\ldots,y_k} \id y_1 \cdots \id y_k.
	\end{align*}
\end{lemma}

\begin{proof}
	Using Mecke's formula \eqref{eq:mecke}, $\mu_d = t_d \Lambda_d$ and rewriting the indicator yields
	\begin{align*}
		\E*{F_k} & = \tfrac{1}{(k+1)!} \smashoperator{\int\limits_{W^{k+1}}} \1\cbras*{\max\limits_{i,j=0}^k \unorm*{y_j - y_i} \leq \delta} \id \mu^{k+1}(y_0,\ldots,y_k)\\
			& = \tfrac{t^{k+1}}{(k+1)!} \smashoperator[l]{\int\limits_{W}} \smashoperator[r]{\int\limits_{W^k}} \1\cbras*{\max\limits_{j=1}^k \unorm*{y_j - y_0} \leq \delta} \1\cbras*{\max\limits_{i,j=1}^k \unorm*{y_j - y_i} \leq \delta} \id y_1 \cdots \id y_k \id y_0.
	\end{align*}
	The linear transformation $y_j = y_j - y_0$, $y_0 \in W$ fixed, for all $j \in \cbras*{1,\ldots,k}$ has determinant $\det(J) = 1$, thus
	\begin{align*}
		\E*{F_k} = \tfrac{t^{k+1}}{(k+1)!}\smashoperator[l]{\int\limits_{W}} \smashoperator[r]{\int\limits_{(W-y_0)^k}} \1\cbras*{\max\limits_{j=1}^k \unorm{y_j} \leq \delta} \1\cbras*{\max\limits_{i,j=1}^k \unorm{y_j-y_i} \leq \delta} \id y_k \cdots \id y_1 \id y_0.
	\end{align*}
	The substitution $\delta y_j = y_j$ for all $j \in \cbras*{1,\ldots,k}$ has determinant $\det(J) = \delta^{dk}$, thus
	\begin{align*}
		\E*{F_k} = \tfrac{t(t\delta^d)^k}{(k+1)!} \smashoperator[l]{\int\limits_{W}} \smashoperator[r]{\int\limits_{(\delta^{-1}(W-y_0))^k}} \1\cbras*{\max\limits_{j=1}^k \unorm{y_j} \leq 1} \1\cbras*{\max\limits_{i,j=1}^k \unorm{y_j-y_i} \leq 1} \id y_k \cdots \id y_1 \id y_0.
	\end{align*}
	Using the inner parallel set to handle the boundary effects arising from $y_0$ close to $\partial W$ we obtain the lower bound given by
	\begin{align*}
		\E*{F_k} & \geq \tfrac{1}{(k+1)!} t (t\delta^d)^k \smashoperator[l]{\int\limits_{W_{-\delta}}} \smashoperator[r]{\int\limits_{(\delta^{-1}(W-y_0) \cap \uBB 01)^k}} \1\cbras*{\max\limits_{i,j=1}^k \unorm*{y_j-y_i} \leq 1} \id y_1 \cdots \id y_k \id y_0\\
			& = \tfrac{1}{(k+1)!} t (t\delta^d)^k \smashoperator[l]{\int\limits_{W_{-\delta}}} \smashoperator[r]{\int\limits_{\uBB 01^k}} \1\cbras*{\max\limits_{i,j=1}^k \unorm*{y_j-y_i} \leq 1} \id y_1 \cdots \id y_k \id y_0\\
			& = \tfrac{\Lambda_d(W_{-\delta})}{(k+1)!} t (t\delta^d)^k \smashoperator{\int\limits_{\uBB 01^k}} \1\cbras*{\max\limits_{i,j=1}^k \unorm*{y_j - y_i} \leq 1} \id y_1 \cdots \id y_k.
	\end{align*}
	Additionally, using $\delta^{-1}(W-y_0) \cap \uBB 01 \subseteq \uBB 01$ we establish the upper bound
	\begin{align*}
		\E*{F_k} & \leq \tfrac{1}{(k+1)!} t (t\delta^d)^k \smashoperator[l]{\int\limits_{W}} \smashoperator[r]{\int\limits_{\uBB 01^k}} \1\cbras*{\max\limits_{i,j=1}^k \unorm*{y_j-y_i} \leq 1} \id y_1 \cdots \id y_k \id y_0\\
			& = \tfrac{\Lambda_d(W)}{(k+1)!} t (t\delta^d)^k \smashoperator{\int\limits_{\uBB 01^k}} \1\cbras*{\max\limits_{i,j=1}^k \unorm*{y_j - y_i} \leq 1} \id y_1 \cdots \id y_k,
	\end{align*}
	which completes the proof.
\end{proof}

\begin{lemma}\label{lem:variance:step:1}
	For $k \geq 1$ there existing explicit constants $\bC(k,r)$ only depending on $k$ and $r$ such that the variance of the number of $k$-simplices in the random Vietoris-Rips complex $\vietoris^{\infty}(\eta_d,\delta_d)$ is bounded by
	\begin{align*}
		\V*{F_k} & \geq \E*{F_k} + \sum\limits_{r=1}^{k} \bC(k+1,r) \Lambda_d(W_{-\delta}) t(t\delta^d)^{2k-r+1} \cI_{\dsV}(d,k,r-1),\\
		\V*{F_k} & \leq \E*{F_k} + \sum\limits_{r=1}^{k} \bC(k+1,r) \Lambda_d(W) t(t\delta^d)^{2k-r+1} \cI_{\dsV}(d,k,r-1),
	\end{align*}
	where $\cI_{\dsV}(d,k,r)$ denotes the integral
	\begin{align*}
		\cI_{\dsV}(d,k,r) & := \smashoperator{\int\limits_{\uBB 01^{2k-r}}} \leqone*{\ivarr y1k} \leqone*{\ivarr y1r , \ivar z{k-r}} \id \ivarr y1k \id \ivar z{k-r}.
	\end{align*}
\end{lemma}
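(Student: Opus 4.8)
The plan is to feed the variance decomposition of Corollary \ref{cor:decomp:var} into the same change-of-variables machinery used for the expectation in Lemma \ref{lem:expectation:step:1}. Since $F_k$ is a $U$-statistic of order $n = k+1$ with symmetric kernel $h = \leqdeltaop$, setting $n = k+1$ in Corollary \ref{cor:decomp:var} gives
\begin{align*}
	\V*{F_k} = \sum\limits_{r=1}^{k+1} \bC(k+1,r) \smashoperator{\int\limits_{W^{2k+2-r}}}
		& \leqdelta*{y_0,\ldots,y_k}\, \leqdelta*{y_0,\ldots,y_{r-1}, z_0,\ldots,z_{k-r}}\\
		& \id \mu^{k+1}(\ivar y{k+1}) \id \mu^{k+1-r}(\ivar z{k+1-r}),
\end{align*}
where the two kernels share the $r$ variables $y_0,\ldots,y_{r-1}$. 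First I would isolate the top term $r = k+1$: here the second kernel carries no $z$-variable and equals the first, so by idempotency of the indicator the integrand is $\leqdelta*{y_0,\ldots,y_k}$, and since $\bC(k+1,k+1) = \tfrac{1}{(k+1)!}$ this term is exactly $\E*{F_k}$. This produces the leading summand in both bounds.

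For each remaining $r \in \cbras*{1,\ldots,k}$ I would reduce the corresponding integral exactly as in Lemma \ref{lem:expectation:step:1}. Replacing $\mu$ by $t\Lambda_d$ yields the prefactor $t^{2k+2-r}$. Because $r \geq 1$, the point $y_0$ lies in both kernels, so I take it as the common base point: the translation $y_j \mapsto y_j - y_0$, $z_j \mapsto z_j - y_0$ has unit Jacobian and forces every remaining point into $(W - y_0) \cap \uBB 0\delta$, after which the dilation by $\delta$ contributes the Jacobian $\delta^{d(2k-r+1)}$. Combining, $t^{2k+2-r}\delta^{d(2k-r+1)} = t(t\delta^d)^{2k-r+1}$, the prefactor in the claim. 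The distance constraints tying the remaining points to $y_0$ are absorbed into the rescaled domain $\uBB 01$, so the first kernel collapses to $\leqone*{y_1,\ldots,y_k}$ and the second to $\leqone*{y_1,\ldots,y_{r-1}, z_0,\ldots,z_{k-r}}$; the $r-1$ surviving shared variables together with the $k-r+1$ free $z$-variables are precisely those of $\cI_{\dsV}(d,k,r-1)$.

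The observation-window factor is handled exactly as for the expectation. Restricting $y_0$ to the inner parallel set $W_{-\delta}$ gives $\uBB 0\delta \subseteq W - y_0$, hence $(W - y_0)\cap\uBB 0\delta = \uBB 0\delta$, and integrating out $y_0$ contributes $\Lambda_d(W_{-\delta})$, which yields the lower bound; discarding the truncation through $(W - y_0)\cap\uBB 0\delta \subseteq \uBB 0\delta$ and letting $y_0$ range over all of $W$ contributes $\Lambda_d(W)$ and yields the upper bound. Summing the transformed terms over $r = 1,\ldots,k$ and adding the isolated $\E*{F_k}$ term gives both inequalities. I expect the main obstacle to be purely bookkeeping: verifying that deleting the base point $y_0$ lowers the number of shared variables from $r$ to $r-1$, so that the reduced integrand is $\cI_{\dsV}(d,k,r-1)$ rather than $\cI_{\dsV}(d,k,r)$, and confirming that the combinatorial constant indexed as $\bC(k+1,r)$ in the decomposition is exactly the one appearing in the statement.
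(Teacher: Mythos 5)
Your proposal is correct and follows essentially the same route as the paper: apply Corollary \ref{cor:decomp:var} with $n=k+1$, peel off the $r=k+1$ term as $\E*{F_k}$, then translate by $y_0$ and rescale by $\delta$ exactly as in Lemma \ref{lem:expectation:step:1}, with the inner parallel set $W_{-\delta}$ versus $W$ giving the two bounds. The index bookkeeping you flag as the main obstacle is handled the same way in the paper, with the removal of the base point $y_0$ accounting for the shift from $r$ shared variables to $r-1$ and hence the appearance of $\cI_{\dsV}(d,k,r-1)$.
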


\begin{proof}
	We apply Corollary \ref{cor:decomp:var} to our $k$-simplices counting statistic $F_k$ to obtain
	\begin{align*}
		\V*{F_k} & = \sum\limits_{r=1}^{k+1} \bC(k+1,r) \smashoperator{\int\limits_{W^{2(k+1)-r}}} \leqdelta{\ivar y{k+1}} \leqdelta{\ivar yr, \ivar z{k+1-r}} \id \mu^{k+1}(\ivar y{k+1}) \id \mu^{k+1-r}(\ivar z{k+1-r}).
	\end{align*}
	For $r = k+1$ the integral is given by
	\begin{align*}
		\smashoperator{\int\limits_{W^{k+1}}} \leqdelta{\ivar y{k+1}} \leqdelta{\ivar y{k+1}} \id \mu^{k+1}(\ivar y{k+1}) = \smashoperator{\int\limits_{W^{k+1}}} \leqdelta{\ivar y{k+1}} \id \mu^{k+1}(\ivar y{k+1}),
	\end{align*}
	which is $(k+1)! \E*{F_k}$. Since $\bC(k+1,k+1) = \frac{1}{(k+1)!}$ the $r = k+1$ term in the decomposition is equal to $\E*{F_k}$.
	Therefore
	\begin{align*}
		\V*{F_k} & = \E*{F_k} + \sum\limits_{r=1}^{k} \bC(k+1,r) \smashoperator{\int\limits_{W^{2(k+1)-r}}} \1\cbras*{\max\limits_{i,j=0}^k\unorm*{y_j - y_i} \leq \delta} \1\cbras*{\max\limits_{i,j=0}^{k-r}\unorm{z_j - z_i} \leq \delta}\\
		& \qquad \qquad \times \1\cbras*{\max\limits_{i=0}^{k-r}\max\limits_{j=0}^{r-1}\unorm{y_j - z_i} \leq \delta} \id \mu^{k+1}(\ivar yk) \id \mu^{k+1-r}(\ivar z{k-r}).
	\end{align*}
	We now proceed analogously to the proof of Lemma \ref{lem:expectation:step:1} using the linear transformation $y_j = y_j - y_0$, $z_i = z_i - y_0$, $y_0 \in W$ fixed, and the substitution $\delta y_j = y_j$, $\delta z_j = z_j$ for all $j \in \cbras*{1,\ldots,k}$ and $i \in \cbras*{0,\ldots,k-r}$.
	Thus the integrals are given for $r \in \cbras*{1,\ldots,k}$ by
	\begin{align*}
		& t(t\delta^d)^{2k-r+1} \smashoperator[l]{\int\limits_{W}} \smashoperator[r]{\int\limits_{(\delta^{-1}(W-y_0))^{2k-r+1}}} \1\cbras*{\max\limits_{j=1}^k \unorm*{y_j} \leq 1 \wedge \max\limits_{i=0}^{k-r} \unorm*{z_i} \leq 1 \wedge \max\limits_{i,j=1}^k \unorm*{y_j-y_i} \leq 1}\\
		& \times \1\cbras*{\max\limits_{i,j=0}^{k-r} \unorm*{z_j - z_i} \leq 1 \wedge \max\limits_{i=0}^{k-r} \max\limits_{j=1}^{r-1} \unorm*{y_j - z_j} \leq 1}\id y_1 \cdots \id y_k \id z_0 \cdots \id z_{k-r} \id y_0\\
		=~& t(t\delta^d)^{2k-r+1} \smashoperator[l]{\int\limits_{W}} \smashoperator[r]{\int\limits_{(\delta^{-1}(W-y_0) \cap \uBB 01)^{2k-r+1}}} \1\cbras*{\max\limits_{i,j=1}^k \unorm*{y_j-y_i} \leq 1 \wedge \max\limits_{i,j=0}^{k-r} \unorm*{z_j - z_i} \leq 1}\\
		& \times \1\cbras*{\max\limits_{i=0}^{k-r} \max\limits_{j=1}^{r-1} \unorm*{y_j - z_j} \leq 1}\id y_1 \cdots \id y_k \id z_0 \cdots \id z_{k-r} \id y_0.
	\end{align*}
	Using the inner parallel set to handle the boundary effects we obtain the lower bound given by
	\begin{align*}
		\V*{F_k} \geq \E*{F_k} + & \sum\limits_{r=1}^{k} \bC(k+1,r) \Lambda_d(W_{-\delta}) t(t\delta^d)^{2k-r+1}\\
		& \quad \smashoperator[r]{\int\limits_{\uBB 01^{2k-r+1}}} \1\cbras*{\max\limits_{i,j=1}^k \unorm*{y_j-y_i} \leq 1 \wedge \max\limits_{i,j=0}^{k-r} \unorm*{z_j - z_i} \leq 1}\\
		& \quad \times \1\cbras*{\max\limits_{i=0}^{k-r} \max\limits_{j=1}^{r-1} \unorm*{y_j - z_j} \leq 1}\id y_1 \cdots \id y_k \id z_0 \cdots \id z_{k-r}.
	\end{align*}
	Additionally, we establish the upper bound
	\begin{align*}
		\V*{F_k} \leq \E*{F_k} + & \sum\limits_{r=1}^{k} \bC(k+1,r) \Lambda_d(W) t(t\delta^d)^{2k-r+1}\\
		& \quad \smashoperator[r]{\int\limits_{\uBB 01^{2k-r+1}}} \1\cbras*{\max\limits_{i,j=1}^k \unorm*{y_j-y_i} \leq 1 \wedge \max\limits_{i,j=0}^{k-r} \unorm*{z_j - z_i} \leq 1}\\
		& \quad \times \1\cbras*{\max\limits_{i=0}^{k-r} \max\limits_{j=1}^{r-1} \unorm*{y_j - z_j} \leq 1}\id y_1 \cdots \id y_k \id z_0 \cdots \id z_{k-r},
	\end{align*}
	which completes the proof.
\end{proof}

We are now left with the task of determining the exact values of the two integrals $\cI_{\dsE}$ in Lemma \ref{lem:expectation:step:1} and $\cI_{\dsV}$ in Lemma \ref{lem:variance:step:1}:

\begin{lemma}\label{lem:expectation:step:2}
	For $d \geq 1$ and $k \geq 1$:
	\begin{align*}
		\cI_{\dsE}(d,k) = (k+1)^d.
	\end{align*}
\end{lemma}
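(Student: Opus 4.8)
I need to compute
$$\cI_{\dsE}(d,k) = \int_{\uBB 01^k} \leqone*{y_1,\ldots,y_k}\, \id y_1 \cdots \id y_k,$$
where the integrand is the indicator that all pairwise uniform distances among $y_1,\ldots,y_k$ (each lying in the unit $\ell^\infty$-ball $\uBB 01$) are at most $1$. Let me think about the structure.

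The domain: each $y_j \in \uBB 01 = [-1,1]^d$ (uniform-norm ball of radius $1$, centered at origin). Wait — but there's also an implicit constraint. Let me reconsider. The integral $\cI_{\dsE}$ came from the expectation computation where we had $y_0$ fixed (at origin after translation), and all $y_j$ within distance $1$ of $y_0 = 0$, meaning $y_j \in \uBB 01$. The factor $\leqone{y_1,\ldots,y_k}$ encodes pairwise distances among $y_1,\ldots,y_k$. So the full simplex condition is: all $k+1$ points $0, y_1,\ldots,y_k$ pairwise within distance $1$.

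Now the uniform norm factorizes across coordinates! $\unorm{y_j - y_i} = \max_\ell |y_{j,\ell} - y_{i,\ell}|$, so $\unorm{y_j-y_i}\le 1$ for all $i,j$ (including $i=0$, $y_0=0$) is equivalent to: for every coordinate $\ell$, all of $0, y_{1,\ell},\ldots,y_{k,\ell}$ are pairwise within $1$. Hence the integral factorizes as a $d$-th power of a one-dimensional integral.

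So the plan is to compute the one-dimensional base integral
$$J_k := \int_{[-1,1]^k} \1\{\text{all pairwise distances among } 0, y_1,\ldots,y_k \le 1\}\, \id y_1 \cdots \id y_k,$$
and conclude $\cI_{\dsE}(d,k) = J_k^d$. The claim $\cI_{\dsE}(d,k)=(k+1)^d$ then reduces to showing $J_k = k+1$.

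**Computing $J_k$.** Set $y_0 = 0$. The condition is that the $k+1$ real numbers $y_0,\ldots,y_k$ have diameter $\max_i y_i - \min_i y_i \le 1$, with each $y_i \in [-1,1]$. Since $y_0 = 0$ is always present, $\min_i y_i \le 0 \le \max_i y_i$, so automatically $\max_i y_i \le 1$ and $\min_i y_i \ge -1$ once the diameter is $\le 1$; thus the box constraint $[-1,1]^k$ is implied and $J_k = \Lambda_k\{y\in\RR^k : \max_i y_i - \min_i y_i \le 1\}$ where the max/min run over $y_0=0,y_1,\ldots,y_k$. This is the volume of the set of $k$ reals whose range, together with $0$, is at most $1$ — equivalently all $y_i$ lie in some length-$1$ window containing $0$. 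I would evaluate this directly: partition by which index achieves the minimum, or integrate by letting $m = \min_i y_i \in [-1,0]$ and requiring all others in $[m, m+1]$.

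**Main obstacle.** The only real work is the one-dimensional volume computation $J_k = k+1$. I expect the cleanest route is a direct combinatorial/integration argument: condition on the minimum $m$ of the $y_i$ (or on the position of the window), which introduces a factor linear in $k$ and yields $k+1$. One should double-check the boundary interaction with $0$ and confirm the factorization is exact (no measure-zero complications on the diagonals). Once $J_k = k+1$ is established, the factorization $\cI_{\dsE}(d,k) = J_k^d = (k+1)^d$ finishes the proof immediately.
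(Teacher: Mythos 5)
Your overall architecture matches the paper's first step exactly: both arguments use the coordinate-wise factorization of the uniform norm to write $\cI_{\dsE}(d,k)=\cI_{\dsE}(1,k)^d$ and thereby reduce everything to a one-dimensional volume. Where you diverge is in how that one-dimensional integral is treated. The paper partitions $[-1,1]^k$ into $k(k-1)$ regions according to which of $y_1,\ldots,y_k$ is the maximum and which is the minimum, evaluates a two-variable integral equal to $\tfrac{k+1}{k(k-1)}$ on each region, and multiplies by the number of regions. You instead fold the origin into the configuration, note correctly that the condition is ``all of $0,y_1,\ldots,y_k$ lie in a common interval of length $1$'' (so the box constraint is implied and the diagonals are null), and propose to condition on the minimum of the augmented set $\cbras{0,y_1,\ldots,y_k}$. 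That route does work and is arguably cleaner: up to null sets the event splits into the case that $0$ is the minimum, which forces all $y_i\in[0,1]$ and has volume $1$, and the $k$ symmetric cases in which some $y_i=m\in[-1,0)$ is the minimum, which force the remaining $k-1$ points into $[m,m+1]$ (with $0\in[m,m+1]$ automatic since $m\geq -1$) and each contribute $\int_{-1}^{0}1^{k-1}\,\id m=1$, giving $J_k=1+k$ with no further case distinction inside the integral.

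The gap is that this evaluation is the entire content of the lemma and you stop at ``I would evaluate this directly'' and ``I expect \dots yields $k+1$.'' Nothing you wrote is wrong, and the sketched decomposition does produce $k+1$, but as submitted the proof ends exactly where the work begins: you must actually carry out the case split above (identify the $k+1$ cases, verify that in each non-trivial case the constraint on the other $k-1$ variables is a length-one interval of volume $1^{k-1}$, integrate over $m\in[-1,0]$, and check that the overlaps between cases are Lebesgue-null). Once that paragraph is written out, your argument is complete and slightly more economical than the paper's $k(k-1)$-region computation.
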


\begin{proof}
	Let us first observe that
	\begin{align*}
		\1\cbras*{\unorm*{y_i - y_j} \leq 1} = \prod\limits_{n=1}^d \1\cbras*{\abs*{y_{i,n} - y_{j,n}} \leq 1},
	\end{align*}
	where $y_{i,n}$ denotes the $n$-th component of the point $y_i \in \RR^d$. Thus
	\begin{align*}
		\cI_{\dsE}(d,k) = \cI_{\dsE}(1,k)^d = \rbras*{~\int\limits_{[-1,1]^k} \1\cbras*{\max\limits_{i,j=1}^k \abs*{y_j - y_i} \leq 1} \id y_1 \cdots \id y_k }^d,
	\end{align*}
	and we are left to show that $\cI_{\dsE}(1,k) = k+1$.
	We note that 
	\begin{align*}
		\1\cbras*{\max\limits_{i,j=1}^k \abs*{y_j - y_i} \leq 1} = \1\cbras*{\max\cbras*{y_1,\ldots,y_k} - \min\cbras*{y_1,\ldots,y_k} \leq 1},
	\end{align*}
	for all $y_1, \ldots, y_k \in [-1,1]$ and thus
	\begin{align*}
		\cI_{\dsE}(1,k) = \smashoperator{\int\limits_{[-1,1]^k}} \1\cbras*{\max\cbras*{y_1,\ldots,y_k} - \min\cbras*{y_1,\ldots,y_k} \leq 1} \id y_1 \cdots \id y_k.
	\end{align*}
	Since the integrand does only depend on the maximum and the minimum of the variables $y_1,\ldots,y_k$ we split $[-1,1]^k$ into $k(k-1)$ different regions that correspond to the different choices of the maximum and the minimum and observe that all regions yield the same contribution to the complete integral. Therefore we can assume without loss of generality that $y_1$ is the maximum, $y_2$ is the minimum and $y_3, \ldots, y_k \in [y_2, y_1]$. Therefore we calculate the integral
	\begin{align*}
		& \int\limits_{-1}^1 \int\limits_{-1}^{y_1} \1\cbras*{y_1 - y_2 \leq 1} (y_1-y_2)^{k-2} \id y_2 \id y_1
		= \int\limits_{-1}^1 \int\limits_{\max\cbras*{-1,y_1-1}}^{y_1} (y_1 - y_2)^{k-2} \id y_2 \id y_1\\
		= & \int\limits_{-1}^0 \int\limits_{-1}^{y-1} (y_1-y_2)^{k-2} \id y_2 \id y_1 + \int\limits_{0}^{1} \int\limits_{y_1-1}^{y_1} (y_1-y_2)^{k-2} \id y_2 \id y_1\\
		= & \int\limits_{-1}^0 \frac{(y_1+1)^{k-1}}{k-1} \id y_1 + \int\limits_{0}^{1} \frac{1}{k-1} \id y_1 
		= \frac{1}{k(k-1)} + \frac{1}{k-1} = \frac{k+1}{k(k-1)}.
	\end{align*}
	Multiplying by the number of regions yields $\cI_{\dsE}(1,k) = k+1$ and completes the proof.
\end{proof}


\begin{lemma}\label{lem:variance:step:2}
	For $d \geq 1$, $k \geq 1$ and $r \in \cbras*{0,\ldots,k}$:
	\begin{align*}
		\cI_{\dsV}(d,k,r) = \rbras*{\frac{2(k+2)(k-r)}{r+2} + r+1}^d.
	\end{align*}
\end{lemma}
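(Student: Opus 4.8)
The plan is to collapse the $d$-dimensional integral to a one-dimensional one and then evaluate the latter by exploiting the product structure that the two indicators exhibit once the shared variables are frozen. \textbf{Reduction to dimension one.} Exactly as in the proof of Lemma~\ref{lem:expectation:step:2}, the uniform-norm condition factorizes over coordinates, $\1\cbras*{\unorm*{y_i-y_j}\le 1}=\prod_{n=1}^{d}\1\cbras*{\abs*{y_{i,n}-y_{j,n}}\le 1}$, so both indicators in the integrand split into products over the $d$ coordinates while $\uBB 01=[-1,1]^d$. Regrouping by coordinate and applying Fubini yields $\cI_{\dsV}(d,k,r)=\cI_{\dsV}(1,k,r)^{d}$, and it remains to prove $\cI_{\dsV}(1,k,r)=\frac{2(k+2)(k-r)}{r+2}+r+1$.

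\textbf{Freezing the shared variables.} Split the $2k-r$ scalar variables into the shared block $A=(y_1,\dots,y_r)$, the block $B=(y_{r+1},\dots,y_k)$ and the block $C=(z_0,\dots,z_{k-r-1})$, the last two each of length $k-r$; the first indicator then asks that $A\cup B$ have diameter at most $1$, the second that $A\cup C$ have diameter at most $1$. For fixed $A$ these constraints act on the disjoint blocks $B$ and $C$ and give identical factors, so
\begin{align*}
\cI_{\dsV}(1,k,r)=\int_{[-1,1]^{r}}\phi(A)^{2}\,\id A,\qquad \phi(A):=\int_{[-1,1]^{k-r}}\1\cbras*{\max(A\cup B)-\min(A\cup B)\le 1}\,\id B.
\end{align*}
Since the constraint sees $A$ only through $m:=\min A$ and $M:=\max A$, the function $\phi$ depends only on $(m,M)$ and vanishes unless $M-m\le 1$. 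For $M-m\le 1$ I would compute $\phi$ by conditioning on the global minimum $L:=\min(\{m\}\cup B)$: either all $b_j\ge m$, so that $L=m$ and every $b_j$ additionally lies below $m+1$, or $L$ is attained by one of the $b_j$ at a level $L\in[\max(-1,M-1),m)$ while the remaining $k-r-1$ points lie in $[L,\min(L+1,1)]$. Writing $g(L):=\min(1,1-L)$ for the length of the admissible window, this yields
\begin{align*}
\phi(m,M)=g(m)^{k-r}+(k-r)\int_{\max(-1,M-1)}^{m}g(L)^{k-r-1}\,\id L.
\end{align*}

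\textbf{Final integration.} For $r\ge 2$ the image of Lebesgue measure on $[-1,1]^{r}$ under $A\mapsto(\min A,\max A)$ has density $r(r-1)(M-m)^{r-2}$ on $\{-1\le m\le M\le 1\}$, whence
\begin{align*}
\cI_{\dsV}(1,k,r)=\int_{-1}^{1}\!\!\int_{m}^{1}r(r-1)(M-m)^{r-2}\,\phi(m,M)^{2}\,\id M\,\id m,
\end{align*}
the cases $r=1$ and $r=0$ reducing to $\int_{-1}^{1}\phi(a,a)^{2}\,\id a$ and to $\phi^{2}=(\cI_{\dsE}(1,k))^{2}=(k+1)^{2}$, respectively. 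Substituting the closed form for $\phi$ and evaluating this elementary integral then gives the claimed value.

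\textbf{Main obstacle.} The real work sits in this last evaluation. Because the box $[-1,1]$ is finite, $g$ and hence $\phi$ are only piecewise polynomial, so the $(m,M)$-domain must be split according to the signs of $m$ and $M$ and the position of $M-1$ relative to $-1$, and the final integral breaks into several sub-integrals. The delicate point is that all these boundary-generated contributions recombine into the single clean expression $\frac{2(k+2)(k-r)}{r+2}+r+1$; the degenerate cases $r=0$, giving $(k+1)^2$ in agreement with the expectation computation, and $r=k$, giving $k+1=\cI_{\dsE}(1,k)$, serve as useful consistency checks.
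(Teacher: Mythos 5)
Your setup is correct and follows essentially the same route as the paper: reduce to $d=1$ by coordinatewise factorization of the uniform-norm indicators, freeze the shared block $A=(y_1,\dots,y_r)$ so that the integral collapses to $\int\phi(A)^2\,\id A$ with identical factors coming from the two disjoint blocks $B$ and $C$, and compute $\phi$ by a min/max decomposition. Your piecewise expression for $\phi(m,M)$ is also correct; it simplifies to the closed form the paper actually works with, namely $\phi=1+(k-r)\rbras*{1-(y_{\max}-y_{\min})}$ with $y_{\max}:=\max\cbras*{0,y_1,\dots,y_r}$ and $y_{\min}:=\min\cbras*{0,y_1,\dots,y_r}$, valid on the event $M-m\le1$. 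The consistency checks at $r=0$ and $r=k$ are fine.

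The gap is exactly where you locate it yourself: the final integration is never performed, and that is the only place where the specific value $\tfrac{2(k+2)(k-r)}{r+2}+r+1$ can emerge. For every nontrivial $r\in\cbras*{1,\dots,k-1}$ your argument ends with the assertion that "evaluating this elementary integral then gives the claimed value", so the identity is actually verified only at the two degenerate endpoints. Moreover, leaving $\phi$ in the form $g(m)^{k-r}+(k-r)\int g(L)^{k-r-1}\,\id L$ makes the last step needlessly painful, since both $g$ and the lower limit $\max(-1,M-1)$ are themselves piecewise. The paper first reduces $\phi$ to the closed form above, after which the outer integral splits into just three regions according to the signs of $\max A$ and $\min A$; per max/min region (for $r\ge2$) this evaluates to $\frac{(2(k-r)+3)r+2(k-r+1)^2+r^2}{(r-1)r(r+2)}$, which multiplied by the $r(r-1)$ regions gives the claim, with $r=1$ handled by a separate one-line computation. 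To complete your proof you must carry out this evaluation (or the equivalent one against your $(m,M)$-density $r(r-1)(M-m)^{r-2}$); until then the lemma is unproved for all the values of $r$ that matter in Lemma \ref{lem:variance}.
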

	
\begin{proof}
	As in the proof of Lemma \ref{lem:expectation:step:2} we reduce the proof to the case $d=1$ and rewrite the indicators to obtain the integral
	\begin{align*}
		\cI_{\dsV}(1,k,r) & = \smashoperator[l]{\int\limits_{[-1,1]^k}} \smashoperator[r]{\int\limits_{[-1,1]^{k-r}}} \1\cbras*{\max\cbras*{\ivarr y1k} - \min\cbras*{\ivarr y1k} \leq 1}\\
		& \qquad \times \1\cbras*{\max\cbras*{\ivarr y1r, \ivar z{k-r}} - \min\cbras*{\ivarr y1r, \ivar z{k-r}} \leq 1} \id \ivar z{k-r} \id \ivarr y1k.
	\end{align*}
	For $r = 0$ the integral factorizes into $\cI_{\dsE}(1,k)^2$ and since $h^2 = h$ the case $r = k$ can be directly reduced to $\cI_{\dsE}(1,k)$ yielding the claim.
	For $r \in \cbras*{1,\ldots,k-1}$ we rearrange the order of integration to obtain
	\begin{align*}
		\smashoperator{\int\limits_{[-1,1]^r}} \1\cbras*{\max\cbras*{\ivarr y1r} - \min\cbras*{\ivarr y1r} \leq 1} \cJ\rbras*{y_1,\ldots,y_r}^2 \id \ivarr y1r,
	\end{align*}
	where we define
	\begin{align*}
		\cJ\rbras*{y_1,\ldots,y_r} :=  \smashoperator{\int\limits_{[-1,1]^{k-r}}} \1\cbras*{\max\cbras*{\ivarr y1r, \ivar z{k-r}} - \min\cbras*{\ivarr y1r, \ivar z{k-r}} \leq 1} \id \ivar z{k-r}.
	\end{align*}
	Let us first examine the condition in the indicator of $\cJ\rbras*{y_1,\ldots,y_r}$:
	We note that $\max\cbras*{\ivarr y1r, \ivar z{k-r}} - \min\cbras*{\ivarr y1r, \ivar z{k-r}} \leq 1$ is satisfied if and only if the following three conditions are satisfied at the same time:
	\begin{align*}
		\abs*{y_j - y_i} \leq 1 & \quad \forall i,j \in \cbras*{1,\ldots,r},\\
		\abs*{z_j - z_i} \leq 1 & \quad \forall i,j \in \cbras*{0,\ldots,k-r-1},\\
		\abs*{y_j - z_i} \leq 1 & \quad \forall i \in \cbras*{0,\ldots,k-r-1}, j \in \cbras*{1,\ldots,r}.
	\end{align*}
	The first condition is always satisfied since we integrate over $y_1,\ldots,y_r$ with respect to the corresponding indicator in the outer integral and the third condition is equivalent to 
	\begin{align*}
		z_i \in \sbras*{-1+\max\cbras*{0,y_1,\ldots,y_r}, 1 + \min\cbras*{0, y_1, \ldots, y_r}}, \quad \forall i \in \cbras*{0,\ldots,k-r-1}.
	\end{align*}
	We define the positive part of the maximum and the negative part of the minimum by
	\begin{align*}
		y_{\max} & := \max\cbras*{0,y_1,\ldots,y_r}\\
		y_{\min} & := \min\cbras*{0,y_1,\ldots,y_r}.
	\end{align*}
	Assuming $y_{\max} - y_{\min} \leq 1$ it follows that
	\begin{align*}
		\cJ\rbras*{y_1,\ldots,y_r} = \smashoperator{\int\limits_{[-1+y_{\max}, 1+y_{\min}]^{k-r}}} \1\cbras*{\max\cbras*{\ivar z{k-r}} - \min\cbras*{\ivar z{k-r}} \leq 1} \id \ivar z{k-r}.
	\end{align*}
	Let us first consider the case $r = k-1$ implying $k-r = 1$:
	It follows immediately that $\cJ(y_1,\ldots,y_r) = 2 + y_{\min} - y_{\max}$. 
	For $r \in \cbras*{1,\ldots,k-2}$ we continue similar to the proof of Lemma \ref{lem:expectation:step:2}:
	We split the domain of the integral into $(k-r)(k-r-1)$ regions that correspond to the different choices of the maximum and the minimum. Without loss of generality we assume that $z_0$ is the maximum and $z_1$ is the minimum, which implies that $z_2, \ldots, z_{k-r} \in [z_1,z_0]$.
	Therefore we obtain
	\begin{align*}
		\cJ(y_1,\ldots,y_r) & = \int\limits_{-1+y_{\max}}^{1+y_{\min}} \int\limits_{-1+y_{\max}}^{z_0} \1\cbras*{z_0 - z_1 \leq 1} (z_0 - z_1)^{k-r-2} \id z_1 \id z_0\\
			& = 1 + (k-r)\rbras*{1-\rbras*{y_{\max} - y_{\min}}},
	\end{align*}
	which also represents the equation for $r = k-1$.
	Our next objective is to evaluate the integral
	\begin{align*}
		\cI_{\dsV}(1,k,r) = & \smashoperator{\int\limits_{[-1,1]^r}} \1\cbras*{\max\cbras*{\ivarr y1r} - \min\cbras*{\ivarr y1r} \leq 1}\\
		& \quad \rbras*{1 + (k-r)\rbras*{1-\rbras*{\max\cbras*{0,\ivarr y1r} - \min\cbras*{0,\ivarr y1r}}}}^2 \id \ivarr y1r.
	\end{align*}
	For $r = 1$ we simply derive
	\begin{align*}
		\cI_{\dsV}(1,k,1) & = \smashoperator{\int\limits_{-1}^1} \rbras*{1 + (k-1)\rbras*{1-\rbras*{\max\cbras*{0,y} - \min\cbras*{0,y}}}}^2 \id y\\
		& = \smashoperator{\int\limits_{-1}^0} \rbras*{1+(k-1)(1+y)}^2 \id y + \smashoperator{\int\limits_{0}^1} \rbras*{1+(k-1)(1-y)}^2 \id y = \frac{2}{3} \rbras*{k^2 + k + 1},
	\end{align*}
	which is the desired result.
	For $r \in \cbras*{2,\ldots,k-1}$ we split the domain of the integral into $r(r-1)$ regions. Without loss of generality we assume that $y_1$ is the maximum and $y_2$ the minimum, which implies that $y_3,\ldots,y_r \in [y_2,y_1]$.
	Thus 
	\begin{align*}
		& \smashoperator[l]{\int\limits_{-1}^1} \smashoperator[r]{\int\limits_{-1}^{y_1}} \1\cbras*{y_1 - y_2 \leq 1} (y_1 - y_2)^{r-2} J(y_1,\ldots,y_r)^2 \id y_2 \id y_1\\
		= & \smashoperator[l]{\int\limits_{-1}^1} \smashoperator[r]{\int\limits_{\max\cbras*{-1,y_1-1}}^{y_1}} (y_1 - y_2)^{r-2} \rbras*{1 + (k-r) \rbras*{1-\rbras*{\max\cbras*{0,y_1} - \min\cbras*{0,y_2}}}}^2 \id y_2 \id y_1\\
		= & \smashoperator[l]{\int\limits_{-1}^0} \smashoperator[r]{\int\limits_{-1}^{y_1}} (y_1 - y_2)^{r-2} \rbras*{1 + (k-r) \rbras*{1-\rbras*{0 - y_2}}}^2 \id y_2 \id y_1\\
		& + \smashoperator[l]{\int\limits_{0}^1} \smashoperator[r]{\int\limits_{y_1-1}^{0}} (y_1 - y_2)^{r-2} \rbras*{1 + (k-r) \rbras*{1-\rbras*{y_1 - y_2}}}^2 \id y_2 \id y_1\\
		& + \smashoperator[l]{\int\limits_{0}^1} \smashoperator[r]{\int\limits_{0}^{y_1}} (y_1 - y_2)^{r-2} \rbras*{1 + (k-r) \rbras*{1-\rbras*{y_1 - 0}}}^2 \id y_2 \id y_1\\
		= & \frac{(2(k-r)+3)r+2(k-r+1)^2+r^2}{(r-1)r(r+2)}
	\end{align*}
	Multiplying by the number of regions yields the claim for $\cI_{\dsV}(1,k,r)$, which completes the proof, since $\cI_{\dsV}(d,k,r) = \cI_{\dsV}(1,k,r)^d$.
\end{proof}

\begin{proof}[Proof of Lemmas \ref{lem:expectation} and \ref{lem:variance}]
	Combining Lemma \ref{lem:expectation:step:1} with Lemma \ref{lem:expectation:step:2} yields the bound for the expectation, Lemma \ref{lem:expectation}. 
	Combining Lemma \ref{lem:variance:step:1} with Lemma \ref{lem:variance:step:2} yields the bound for the variance, Lemma \ref{lem:variance}.
\end{proof}

\subsection{Bounds for first and second order Malliavin-Derivatives}
\label{sec:proof:malliavin-derivative}

The first order difference operator of our $k$-simplex counting functional is a $U$-statistics of order $k$, given for all $x \in W$ by
\begin{align*}
	D_xF_k = \frac{1}{k!} \sum\limits_{(y_0,\ldots,y_{k-1}) \in \eta^k_{\neq}} \leqdelta{y_0,\ldots,y_{k-1},x}.
\end{align*}
The second order difference operator is a $U$-statistics of order $k-1$, given for all $x_1,x_2 \in W$ by
\begin{align*}
	D_{x_1,x_2}F_k = \frac{1}{(k-1)!} \sum\limits_{(y_0,\ldots,y_{k-2}) \in \eta^{k-1}_{\neq}} \leqdelta{y_0,\ldots,y_{k-2},x_1,x_2},
\end{align*}
if $k \geq 2$ and $D_{x_1,x_2}F_k = \leqdelta{x_1,x_2}$ if $k = 1$, see for instance \cite[Lemma 3.3]{ReitznerSchulte2013}.

The crucial part in the application of the Malliavin-Stein method, Theorems \ref{thm:malliavin-gauss-limit} and \ref{thm:malliavin-poisson-limit}, is the control over the moments of the difference operators that are used in $\gamma_1$, $\gamma_2$ and $\gamma_{3,N}$ resp. $\gamma_{3,P}$. In this section, we will prove the following bounds:

\begin{theorem}\label{thm:derivative:bound}
	Let $k \geq 1$ and $d \geq 1$:
	\begin{enumerate}
		\item For all $p \in \cbras*{2,3,4}$ there existing constants $\bD_p(k)$ only depending on $k$ and $p$ such that for all $x \in W$ it holds, that
			\begin{align}\label{eq:derivative:bound:DxF}
				\E*{(D_xF_k)^p} & \leq \bD_p(k) \sum\limits_{q=k}^{pk} (t\delta^d)^q\rbras*{(k+1)\rbras*{\tfrac{q-k}{p-1} + 1}^{p-1}}^d.
			\end{align}
		\item There exists a constant $\bD_4^*(k)$ only depending on $k$ such that for all $x \in W$ it holds, that
			\begin{align}\label{eq:derivative:bound:DxFDxF-1}
				\E*{\rbras[\big]{(D_xF_k)(D_xF_k-1)}^2} \leq \bD_4^*(k) \sum\limits_{q=k+1}^{4k} (t\delta^d)^q\rbras*{(k+1)\rbras*{\tfrac{q-k}{3}+1}^3}^d.
			\end{align}
		\item There exists a constant $\bD_4'(k)$ only depending on $k$ such that for all $x_1,x_2 \in W$ it holds, that
			\begin{align}\label{eq:derivative:bound:DxxF}
				\E*{(D_{x_1,x_2}F_k)^4} \leq \leqdelta*{x_1,x_2} \bD_4'(k) \sum\limits_{q=k-1}^{4(k-1)} (t\delta^d)^q \rbras[\Big]{k \rbras*{\tfrac{q-(k-1)}{3}+1}^3}^{d}.
			\end{align}
	\end{enumerate}
\end{theorem}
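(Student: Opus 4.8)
The plan is to establish all three moment bounds in Theorem \ref{thm:derivative:bound} by a unified strategy: each difference operator is itself a $U$-statistic, so its $p$-th moment can be expanded using the moment-decomposition machinery of Lemmas \ref{lem:decomp:2}, \ref{lem:decomp:3} and \ref{lem:decomp:4}, and each resulting integral can be evaluated (or bounded) by the same factorization-and-substitution technique used to prove Lemmas \ref{lem:expectation:step:2} and \ref{lem:variance:step:2}. First I would treat part (1) for fixed $p \in \cbras*{2,3,4}$. Since $D_xF_k$ is a $U$-statistic of order $k$ with kernel $h_x(\ivar y{k}) = \leqdelta*{\ivar y{k}, x}$, I apply the appropriate decomposition lemma to $\E*{(D_xF_k)^p}$, writing it as a finite sum over sharing-patterns of integrals of products of $p$ copies of $h_x$. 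Using the factorization inequality \eqref{eq:leqdelta:factorization} and the argument-removal inequality \eqref{eq:leqdelta:removal}, I bound each such integral from above by a product of indicator integrals whose total number of free integration variables, after pulling out the fixed point $x$ and performing the linear shift $y_j \mapsto y_j - x$ together with the scaling $y_j \mapsto \delta y_j$, is exactly $q$, where $q$ ranges between $k$ (all copies maximally overlapping) and $pk$ (no overlap). This produces the factor $(t\delta^d)^q$ in front.

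The heart of the argument is then the dimensional factor. After the standard substitutions each bounding integral factorizes over the $d$ coordinates as $\cI(1)^d$ for a one-dimensional integral $\cI(1)$ counting the Lebesgue volume of configurations of $q$ points in $[-1,1]$ subject to the overlap constraints encoded by the sharing pattern. The combinatorial content is to show that, uniformly over all sharing patterns contributing the same power $q$, this one-dimensional volume is bounded by $(k+1)\rbras*{\tfrac{q-k}{p-1}+1}^{p-1}$. This is the analogue of the clean closed forms $\cI_{\dsE}(1,k) = k+1$ and $\cI_{\dsV}(1,k,r)$ obtained in Lemmas \ref{lem:expectation:step:2} and \ref{lem:variance:step:2}; here I only need an upper bound rather than an exact value, which buys some flexibility. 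The exponent $p-1$ reflects that one of the $p$ kernel copies anchors the base configuration (contributing the $k+1$ factor) while the remaining $p-1$ copies each contribute a linear-in-$(q-k)$ spread, and the division by $p-1$ distributes the excess $q-k$ evenly across these copies in the worst case. Absorbing all the pattern-dependent combinatorial constants $\bC(k,\cdot)$ together with the number of patterns into a single constant $\bD_p(k)$ depending only on $k$ and $p$ yields \eqref{eq:derivative:bound:DxF}.

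For part (3) the same scheme applies verbatim to $D_{x_1,x_2}F_k$, which is a $U$-statistic of order $k-1$ with kernel $\leqdelta*{\ivar y{k-1},x_1,x_2}$. Applying \eqref{eq:leqdelta:factorization} first extracts the prefactor $\leqdelta*{x_1,x_2}$, and the remaining analysis runs with $k$ replaced by $k-1$ and $p = 4$, giving the summation range $q = k-1$ to $4(k-1)$, the base factor $k$ (that is, $(k-1)+1$), and the spread exponent $3 = p-1$. Part (2) is the one I expect to cause the most trouble, since the functional $(D_xF_k)(D_xF_k - 1)$ is not itself a nonnegative $U$-statistic kernel but a product that subtracts the diagonal; the natural move is to bound $\Eabs*{(D_xF_k)(D_xF_k-1)}^2 \leq \E*{(D_xF_k)^4}$ or, to capture the improved lower summation index $q = k+1$ in \eqref{eq:derivative:bound:DxFDxF-1}, to observe that $D_xF_k(D_xF_k-1)$ vanishes whenever $D_xF_k \in \cbras*{0,1}$, so its support requires at least two distinct $k$-simplices through $x$ and hence at least $k+1$ distinct points beyond $x$. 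Making this ``at least two simplices'' structural observation interact correctly with the decomposition — so that the $q = k$ term genuinely drops out and the sum starts at $q = k+1$ — is the delicate point; I would handle it by writing $D_xF_k(D_xF_k-1)$ as a sum over \emph{ordered pairs of distinct} $k$-simplices sharing the vertex $x$, thereby forcing at least one additional vertex, and then apply the order-$2$ fourth-moment expansion of Lemma \ref{lem:decomp:4} to this reformulated statistic before bounding as above.
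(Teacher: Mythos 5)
Your treatment of parts (1) and (3) is essentially the paper's proof: apply the moment decomposition of Lemmas \ref{lem:decomp:2}--\ref{lem:decomp:4} to the $U$-statistic $D_xF_k$ (resp.\ $D_{x_1,x_2}F_k$, after peeling off $\leqdelta*{x_1,x_2}$ via \eqref{eq:leqdelta:factorization}), shift and rescale to extract $(t\delta^d)^q$, factor the surviving one-dimensional integral into a product of $\cI_{\dsE}(1,\cdot)$-type terms via \eqref{eq:leqdelta:factorization} and \eqref{eq:leqdelta:removal}, and then maximize that product over all sharing patterns with fixed total overlap, which is exactly where the ``excess $q-k$ distributed evenly over the $p-1$ non-anchor copies'' shape $(k+1)\rbras*{\tfrac{q-k}{p-1}+1}^{p-1}$ comes from (Lemmas \ref{lem:DF:step:1}--\ref{lem:DF:step:3} in the paper). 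For part (2) you take a genuinely different route. The paper expands $\E*{\rbras{(D_xF_k)(D_xF_k-1)}^2} = \E*{(D_xF_k)^4} - 2\E*{(D_xF_k)^3} + \E*{(D_xF_k)^2}$ and observes that in each of the three decompositions there is exactly one index choice (full overlap of all kernel copies) producing an integral with only $k$ distinct variables, each such term being equal to $\E*{D_xF_k}$; these cancel with coefficients $1-2+1=0$, every surviving integral has at least $k+1$ distinct variables, and the negative third-moment block is then simply discarded. You instead propose to rewrite $D_xF_k(D_xF_k-1)$ as a count over ordered pairs of \emph{distinct} $k$-simplices through $x$, forcing at least $k+1$ vertices structurally. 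This is the same fact seen combinatorially rather than algebraically, and it would work, but it obliges you to set up a fresh decomposition for a pair-indexed statistic whose kernel excludes the total-overlap diagonal, whereas the paper's identity lets you reuse the existing single-variable decompositions verbatim; you were right to flag this as the delicate step, and right that your fallback bound by $\E*{(D_xF_k)^4}$ alone would not do, since it starts the sum at $q=k$ and the $q=k$ term is precisely what must vanish for the Poisson approximation to close.
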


We have divided the proof into a sequence of lemmas. 
At first we will use the moment decomposition for $U$-statistics to obtain bounds, that only involve integrals that depend on $d$, $k$ and the indices given by the decomposition, see Section \ref{sec:decomp}. Therefore, for all $d \geq 1$, $k \geq 1$ we define the integrals
\begin{align}
	\label{eq:cI_2}
	\cI_2 & := \smashoperator[r]{\int\limits_{{\uBB 01}^{2k-r}}} \leqone*{\ivar yk} \leqone*{\ivar yr, \ivar z{k-r}} \id \ivar yk \id \ivar z{k-r}, \\
	\label{eq:cI_3}
	\cI_3 & := \smashoperator[r]{\int\limits_{{\uBB 01}^{3k-r-s}}} \leqone*{\ivar yk} \leqone*{\ivar yr, \ivar z{k-r}} \leqone*{\ivar y{s_Y}, \ivar z{s_Z}, \ivar w{k-s}} \id \ivar yk \id \ivar z{k-r} \id \ivar w{k-s},\\
	\label{eq:cI_4}
	\begin{split}
	\cI_4 & := \smashoperator[r]{\int\limits_{{\uBB 01}^{4k-r-s-m}}} \leqone*{\ivar yk} \leqone*{\ivar yr, \ivar z{k-r}} \leqone*{\ivar y{s_Y}, \ivar z{s_Z}, \ivar w{k-s}}\\
	& \qquad \qquad \times \leqone*{\ivar y{m_Y}, \ivar z{m_Z}, \ivar w{m_W}, \ivar u{k-m}} \id \ivar yk \id \ivar z{k-r} \id \ivar w{k-s} \id \ivar u{k-m},
	\end{split}
\end{align}
where $s := s_Y + s_Z$, $m := m_Y + m_Z + m_W$ and the indices $r,s_Y,s_Z,m_Y,m_Z,m_W$ are given according to the summations in the corresponding moment-decomposition.

\begin{lemma}\label{lem:DF:step:1}
	For all $k \geq 1$, $p \in \cbras*{2,3,4}$ there existing constants $\bD_p(k)$ only depending on $k$ and $p$ such that for all $x \in W$ it holds, that
	\begin{align*}
		\E*{(D_xF_k)^2} & \leq \bD_2(k) \sum (t\delta^d)^{2k-r} \cI_2(d,k,r),\\
		\E*{(D_xF_k)^3} & \leq \bD_3(k) \sum \ldots \sum (t\delta^d)^{3k-r-s} \cI_3(d,k,r,s_Y,s_Z),\\
		\E*{(D_xF_k)^4} & \leq \bD_4(k) \sum \ldots \sum (t\delta^d)^{4k-r-s-m} \cI_4(d,k,r,s_Y,s_Z,m_Y,m_Z,m_W),
	\end{align*}
	where the summations runs over the indices $r,s_Y,s_Z,m_Y,m_Z,m_W$ given in the corresponding moment-decomposition of the $k$-order $U$-statistics $D_xF_k$.
\end{lemma}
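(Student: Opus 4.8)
The plan is to exploit that, for each fixed $x \in W$, the add-one-cost $D_xF_k$ is itself a $U$-statistic of order $k$, namely $D_xF_k = \tfrac{1}{k!}\sum_{(\ivar yk)\in\eta^k_{\neq}} h_x(\ivar yk)$ with the symmetric, measurable kernel $h_x(\ivar yk) := \leqdelta*{y_0,\ldots,y_{k-1},x}$, so that the moment-decomposition results of Section \ref{sec:decomp} apply verbatim with $n = k$. First I would feed $h_x$ into Lemma \ref{lem:decomp:2}, Lemma \ref{lem:decomp:3} and Lemma \ref{lem:decomp:4} to express $\E*{(D_xF_k)^p}$, for $p\in\cbras*{2,3,4}$, as a finite sum --- indexed by $r$; by $(r,s_Y,s_Z)$; by $(r,s_Y,s_Z,m_Y,m_Z,m_W)$ respectively --- of integrals of a product of $p$ copies of $h_x$ against the appropriate power of $\mu$ (Mecke's formula \eqref{eq:mecke} being already built into those lemmas).

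The decisive structural feature is that $h_x$ appends the fixed point $x$ to each of its argument lists, so in every summand \emph{every} integration variable sits within uniform distance $\delta$ of $x$. On each summand I would substitute $\mu = t\Lambda_d$ and carry out the change of variables from the proof of Lemma \ref{lem:expectation:step:1}: the translation $y\mapsto y-x$ with unit Jacobian, followed by the dilation $y\mapsto\delta y$ with Jacobian $\delta^d$ per variable. Each of the $2k-r$ (respectively $3k-r-s$, $4k-r-s-m$) integration variables then contributes one factor $t$ and one factor $\delta^d$, which assembles exactly the prefactor $(t\delta^d)^{2k-r}$ (respectively $(t\delta^d)^{3k-r-s}$, $(t\delta^d)^{4k-r-s-m}$) of the claimed bounds, while each indicator $\leqdelta*{\cdot,x}$ turns into $\leqone*{\cdot,0}$.

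It then remains to pass from these origin-anchored indicators to the integrals $\cI_2,\cI_3,\cI_4$ of \eqref{eq:cI_2}--\eqref{eq:cI_4}. Since $\leqone*{\cdot,0}$ forces $\unorm*{y_i}\le 1$ for each of its entries, the integrand is supported on $\uBB 01$ in every variable; hence the post-dilation domain $\delta^{-1}(W-x)$ may be replaced by $\uBB 01$ variable-by-variable --- an upper bound because $\delta^{-1}(W-x)\cap\uBB 01\subseteq\uBB 01$ and the integrand is nonnegative. On $\uBB 01$ I would drop the origin from each kernel via the argument-removal inequality \eqref{eq:leqdelta:removal}, leaving precisely the product of $\leqone*{\cdot}$-indicators that defines $\cI_2,\cI_3,\cI_4$. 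Finally, the decompositions carry only finitely many summands whose combinatorial constants $\bC(k,\cdot)$ depend on $k$ alone, so I would bound them by a single constant and pull the resulting $\bD_p(k)$ out of the sum.

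The hard part is not any individual estimate --- each is routine --- but the combinatorial bookkeeping, most delicate for $p=4$, of checking that the shared-variable pattern prescribed by Lemma \ref{lem:decomp:3} and Lemma \ref{lem:decomp:4} survives the origin-stripping so that the surviving indicators line up index-for-index with the integrands of $\cI_3$ and $\cI_4$, and that no integration variable escapes the $\uBB 01$-confinement. This confinement is exactly what legitimizes the uniform domain-replacement, and it holds automatically precisely because $h_x$ injects $x$ into each kernel factor.
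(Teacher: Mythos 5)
Your proposal is correct and follows essentially the same route as the paper: view $D_xF_k$ as an order-$k$ $U$-statistic with kernel $\leqdelta*{\cdot\,,x}$, apply the moment-decomposition lemmas, translate by $x$ and dilate by $\delta$ to produce the $(t\delta^d)$-powers, use the resulting origin-anchored indicator to confine all variables to $\uBB 01$, and absorb the combinatorial constants into $\bD_p(k)$. The only cosmetic difference is that you strip the origin via the argument-removal inequality \eqref{eq:leqdelta:removal}, whereas the paper factors the transformed kernel into a domain-restricting indicator times $\leqone*{\cdot}$ and bounds the former by enlarging the domain to $\uBB 01$ --- the same estimate.
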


\begin{proof}
	Fix $k \geq 1$ and $x \in W$. 
	We use the moment-decomposition for $U$-statistics, Lemma \ref{lem:decomp:2}, on $D_xF$ to obtain
	\begin{align*}
		\E*{(D_xF_k)^2} = \sum\limits_{r=0}^k \bC(k,r) \smashoperator{\int\limits_{W^{2k-r}}} \leqdelta{\ivar yk,x}\leqdelta{\ivar yr, \ivar z{k-r},x} \id \mu^k(\ivar yk) \id \mu^{k-r}(\ivar z{k-r}).
	\end{align*}
	The linear transformation $y_j = y_j - x$ and $z_i = z_i - x$ and the substitution $\delta y_j = y_j$, $\delta z_i = z_i$ for all $j \in \cbras*{0,\ldots,k-1}$ and $i \in \cbras*{0,\ldots,k-r-1}$, similar to the proof of Lemma \ref{lem:expectation:step:1}, yields
	\begin{align*}
		\E*{(D_xF_k)^2} = & \sum\limits_{r=0}^k \bC(k,r) (t\delta^d)^{2k-r} \smashoperator{\int\limits_{(\delta^{-1}(W-x) )^{2k-r}}} \1\cbras*{\max\limits_{j=0}^{k-1} \unorm*{y_j} \leq 1 \wedge \max\limits_{i=0}^{k-r-1} \unorm{z_i} \leq 1}\\
		& \times \leqone*{\ivar yk} \cdot \leqone*{\ivar yr, \ivar z{k-r}} \id \ivar y{k} \id \ivar z{k-r}.
	\end{align*}
	We set $D_2(k) := \max_{r=0}^k \bC(k,r)$ and use the first indicator to bound the domain of integration, thus
	\begin{align*}
		\E*{(D_xF_k)^2} \leq \bD_2(k) \sum\limits_{r=0}^k (t\delta^d)^{2k-r} \smashoperator{\int\limits_{{\uBB 01}^{2k-r}}} \leqone*{\ivar yk} \cdot \leqone*{\ivar yr, \ivar z{k-r}} \id \ivar yk \id \ivar z{k-r},
	\end{align*}
	which establishes the formula, since the integral does not depend on $x$ anymore.
	
	In the same manner we use the moment-decomposition for $U$-statistics, Lemma \ref{lem:decomp:3} and \ref{lem:decomp:4}, on $D_xF$ to derive 
	\begin{align*}
		\E*{(D_xF_k)^3} \leq \bD_3(k) \sum \ldots \sum (t\delta^d)^{3k-r-s} \cI_3(d,k,r,s_Y,s_Z),
	\end{align*}
	and
	\begin{align*}
		\E*{(D_xF_k)^4} \leq \bD_4(k) \sum \ldots \sum (t\delta_d)^{4k-r-s-m} \cI_4(d,k,r,s_Y,s_Z,m_Y,m_Z,m_W),
	\end{align*}
	which completes the proof.
\end{proof}

\begin{lemma}\label{lem:DF:step:1-2}
	For all $k \geq 2$ there exists a constant $D_4'(k)$ only depending on $k$ such that for all $x_1,x_2 \in W$ it holds, that
	\begin{align*}
		\E*{(D_{x_1,x_2}F_k)^4} \leq & \1\cbras*{\unorm*{x_1 - x_2} \leq \delta} D_4'(k)\\
		& \times  \sum \ldots \sum (t \delta^d)^{4(k-1)-r-s-m} \cI_4(d,k-1,r,s_Y,s_Z,m_Y,m_z,m_W),
	\end{align*}
	where the summation runs over the indices $r,s_Y,s_Z,m_Y,m_Z,m_W$ given in the corresponding moment-decomposition of the $(k-1)$-order $U$-statistics $D_{x_1,x_2}F_k$.
	For $k = 1$ it holds, that $\E*{(D_{x_1,x_2}F_1)^4} = \1\cbras*{\unorm*{x_1 - x_2} \leq \delta}$.
\end{lemma}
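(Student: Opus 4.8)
The plan is to mirror the proof of Lemma \ref{lem:DF:step:1}, now applied to the $(k-1)$-order $U$-statistic $D_{x_1,x_2}F_k$, adding two ingredients: the extraction of the factor $\1\cbras*{\unorm*{x_1-x_2} \leq \delta}$ and the reduction of the order from $k$ to $k-1$. The boundary case $k=1$ is immediate, since $D_{x_1,x_2}F_1 = \leqdelta*{x_1,x_2}$ is the deterministic indicator $\1\cbras*{\unorm*{x_1-x_2} \leq \delta}$, which coincides with each of its powers, so that $\E*{(D_{x_1,x_2}F_1)^4} = \1\cbras*{\unorm*{x_1-x_2} \leq \delta}$.

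For $k \geq 2$ the kernel $\leqdelta*{\ivar y{k-1}, x_1, x_2}$ is symmetric in its $k-1$ free variables, so Lemma \ref{lem:decomp:4} applies to $D_{x_1,x_2}F_k$ with $n = k-1$, expressing $\E*{(D_{x_1,x_2}F_k)^4}$ as a finite sum over $r,s_Y,s_Z,m_Y,m_Z,m_W$ of constants $\bC(k-1,\cdot)$ times $\mu$-integrals of products of four copies of the kernel. I would then bound each such product pointwise. Because every one of the four copies carries the full set of pairwise-distance constraints involving $x_1$ and $x_2$, in particular the constraint $\unorm*{x_1-x_2} \leq \delta$, the factorization and removal inequalities \eqref{eq:leqdelta:factorization} and \eqref{eq:leqdelta:removal} give, for each copy, $\leqdelta*{\cdots, x_1, x_2} \leq \1\cbras*{\unorm*{x_1-x_2} \leq \delta}\,\leqdelta*{\cdots, x_1}$. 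Multiplying the four copies pulls out a single factor $\1\cbras*{\unorm*{x_1-x_2} \leq \delta}$ (the indicator being idempotent) and leaves a product of four kernels anchored only at $x_1$.

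From here the argument follows Lemma \ref{lem:DF:step:1} almost verbatim: I would apply the shift $y_j \mapsto y_j - x_1$ (and analogously for the $z,w,u$-variables), followed by the scaling substitution $\delta\,y_j \mapsto y_j$, whose Jacobian contributes $\delta^{d(4(k-1)-r-s-m)}$, where $s := s_Y+s_Z$ and $m := m_Y+m_Z+m_W$. Under these maps each anchored kernel $\leqdelta*{\cdots, x_1}$ becomes the product of an indicator confining its free variables to $\uBB 01$ and a factor $\leqone*{\cdots}$; the confining indicators restrict the entire domain to $\uBB 01^{4(k-1)-r-s-m}$, while the $\leqone*{\cdots}$ factors assemble exactly into the integrand of $\cI_4(d,k-1,r,s_Y,s_Z,m_Y,m_Z,m_W)$. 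Together with $\mu = t\Lambda_d$ this yields the factor $(t\delta^d)^{4(k-1)-r-s-m}$, and taking $D_4'(k)$ to be the maximum of the finitely many constants $\bC(k-1,\cdot)$ completes the bound.

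The step to watch is the pointwise domination of the four-kernel product: one must check that, after dropping $x_2$, every free variable ($y$, $z$, $w$, and $u$ alike) still inherits an anchoring constraint to $x_1$ from the kernel copy in which it first occurs, so that the subsequent shift-and-scale genuinely confines the whole $(4(k-1)-r-s-m)$-fold domain to $\uBB 01$ and the reduction to $\cI_4(d,k-1,\cdot)$ is clean. This is precisely the role played by the leading indicator in the proof of Lemma \ref{lem:DF:step:1}, and it is the only place where care beyond that proof is required.
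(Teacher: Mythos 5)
Your proposal is correct and follows essentially the same route as the paper: decompose $\E*{(D_{x_1,x_2}F_k)^4}$ via Lemma \ref{lem:decomp:4} applied to the $(k-1)$-order $U$-statistic, use the factorization and removal inequalities \eqref{eq:leqdelta:factorization} and \eqref{eq:leqdelta:removal} to pull out a single factor $\leqdelta*{x_1,x_2}$ while keeping each of the four kernels anchored at $x_1$, and then shift by $x_1$ and rescale by $\delta$ to arrive at $(t\delta^d)^{4(k-1)-r-s-m}\,\cI_4(d,k-1,\cdot)$. The point you flag as needing care --- that every free variable remains anchored to $x_1$ after dropping $x_2$ --- is exactly how the paper's proof proceeds.
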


\begin{proof}
	Since $D_{x_1,x_2}F_1 = \1\cbras*{\unorm*{x_1 - x_2} \leq \delta}$, the claim for $k=1$ follows immediately.
	Fix $k \geq 2$ and $x_1,x_2 \in W$. We use the moment-decomposition for $U$-statistics, Lemma \ref{lem:decomp:4}, on $D_{x_1,x_2}F_k$ to obtain
	\begin{align*}
		& \E*{(D_{x_1,x_2}F_k)^4} = \sum \ldots \sum \bC(k-1,r,s_Y,s_Z,m_Y,m_Z,m_W)\\
		& \times \smashoperator{\int\limits_{W^{4(k-1)-r-s-m}}} \leqdelta*{\ivar y{k-1}, x_1,x_2} \leqdelta*{\ivar yr,\ivar z{k-1-r}, x_1,x_2} \leqdelta*{\ivar y{s_Y},\ivar z{s_Z}, \ivar w{k-1-s}, x_1,x_2}\\
		& \quad \leqdelta*{\ivar y{m_Y}, \ivar z{m_Z}, \ivar w{m_W}, \ivar u{k-1-m}, x_1, x_2} \id \mu^{4(k-1)-r-s-m}(y,z,w,u).
	\end{align*}
	Using the factorization inequality \eqref{eq:leqdelta:factorization} and the argument-removal inequality \eqref{eq:leqdelta:removal} for $\leqdelta{\cdot, x_1,x_2}$, yields 
	\begin{align*}
		& \E*{(D_{x_1,x_2}F_k)^4} \leq \sum \ldots \sum \bC(k-1,r,s_Y,s_Z,m_Y,m_Z,m_W)\\
		& \times \leqdelta*{x_1,x_2} \smashoperator{\int\limits_{W^{4(k-1)-r-s-m}}} \leqdelta*{\ivar y{k-1},x_1} \leqdelta*{\ivar yr,\ivar z{k-1-r}, x_1} \leqdelta*{\ivar y{s_Y},\ivar z{s_Z}, \ivar w{k-1-s}, x_1}\\
		& \quad \leqdelta*{\ivar y{m_Y}, \ivar z{m_Z}, \ivar w{m_W}, \ivar u{k-1-m}, x_1} \id \mu^{4(k-1)-r-s-m}(y,z,w,u).
	\end{align*}
	Using the linear transformation $y = y-x_1$, $z = z-x_1$, $u = u-x_1$ and $w = w-x_1$ and the substitution $\delta y = y$, $\delta z = z$, $\delta u = u$ and $\delta w = w$ for all variables $y,z,u,w$ in the integral, analysis similar to the proof of Lemma \ref{lem:DF:step:1} yields the desired result.
\end{proof}

In the next step we derive a bound for the integrals depending on the indices in the moment-decomposition:

\begin{lemma}\label{lem:DF:step:2}
	For all $d \geq 1$, $k \geq 1$ and all choices of indices $r,s_Y,s_Z,m_Y,m_Z,m_W \in \cbras*{0,\ldots,k}$ we have
	\begin{align*}
		\cI_2(d,k,r) & \leq \rbras*{(k+1)(k-r+1)}^d\\
		\cI_3(d,k,r,s_Y,s_Z) & \leq \rbras*{(k+1)(k-r+1)(k-s+1)}^d\\
		\cI_4(d,k,r,s_Y,s_Z,m_Y,m_Z,m_W) & \leq \rbras*{(k+1)(k-r+1)(k-s+1)(k-m+1)}^d,
	\end{align*}
	where $s := s_Y + s_Z$ and $m := m_Y + m_Z + m_W$.
\end{lemma}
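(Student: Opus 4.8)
The claim is a bound on integrals $\cI_2, \cI_3, \cI_4$ of products of "diameter $\leq 1$" indicators over boxes $\uBB{0}{1}^N$. These integrals came from the moment decomposition where several kernel copies share variables. Each kernel $\leqone{\cdots}$ enforces that the named group of points has uniform diameter at most $1$. Since the uniform norm and hence the indicator factorizes over coordinates (as used in Lemmas \ref{lem:expectation:step:2} and \ref{lem:variance:step:2}), I would first reduce to $d=1$: the full integral is the $d$-th power of the $1$-dimensional version, so it suffices to prove each factor in the product on the right is the one-dimensional integral's value — or rather an upper bound for it.

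**The plan.** First I would reduce to $d=1$ exactly as before, writing $\cI_p(d,\cdots) = \cI_p(1,\cdots)^d$, so I only need $\cI_2(1,k,r) \leq (k+1)(k-r+1)$, and the analogous $3$- and $4$-kernel bounds. The key combinatorial idea is that each of the up-to-four kernel indicators constrains a block of variables to lie within a window of width $1$. I would bound the contribution of each kernel separately by the number of "fresh" variables it introduces, i.e. variables not already appearing in a previous kernel. The first kernel $\leqone{\ivar{y}{k}}$ contributes the factor $\cI_{\dsE}(1,k) = k+1$ by Lemma \ref{lem:expectation:step:2}. The second kernel, on $\ivar{y}{r},\ivar{z}{k-r}$, introduces $k-r$ fresh variables $z$; conditionally on the shared $y$-variables being fixed, the fresh $z$-variables each must lie in an interval of length at most $1$ determined by the existing points, so integrating them out yields at most $(k-r+1)$ — this mirrors the computation of $\cJ(y_1,\ldots,y_r)$ in Lemma \ref{lem:variance:step:2}, which was exactly $1 + (k-r)(1 - (y_{\max}-y_{\min})) \leq k-r+1$. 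Iterating, the third kernel introduces $k-s$ fresh $w$-variables giving $(k-s+1)$, and the fourth introduces $k-m$ fresh $u$-variables giving $(k-m+1)$.

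**Carrying it out.** Concretely, for $\cI_4(1,\cdots)$ I would integrate out the variables in the reverse order of their introduction. The $k-m$ fresh $u$-variables appear only in the fourth kernel: each $u_i$ is forced into an interval of width $\leq 1$ by the indicator $\leqone{\cdots,\ivar{u}{k-m}}$, so $\int \prod \leqone{\ivar{y}{m_Y},\ivar{z}{m_Z},\ivar{w}{m_W},\ivar{u}{k-m}}\, d\ivar{u}{k-m} \leq k-m+1$ uniformly in the remaining variables, again by the $\cJ$-type estimate. Peeling off this factor leaves an integral of the form $\cI_3$ (with the fourth kernel removed), and then the $k-s$ fresh $w$-variables similarly contribute at most $k-s+1$, reducing to $\cI_2$; the $k-r$ fresh $z$-variables contribute at most $k-r+1$, reducing to $\cI_{\dsE}(1,k) = k+1$. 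Multiplying the four factors and raising to the $d$-th power gives the claimed bounds, with the $\cI_2$ and $\cI_3$ cases being the same argument truncated after two or three kernels.

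**Main obstacle.** The delicate point is justifying the uniform bound $k-m+1$ when integrating out the fresh $u$-variables: the interval into which each $u_i$ is confined depends on the already-fixed points, but crucially its \emph{width} is at most $1$ regardless of their positions (the third condition in Lemma \ref{lem:variance:step:2} shows the admissible window has width $1 + \min(\cdots) - \max(\cdots) \leq 1$). Thus the $\cJ$-computation from Lemma \ref{lem:variance:step:2} applies verbatim with $r$ playing the role of "already-present variables", yielding $1 + (k-m)(1-(\cdots)) \leq k-m+1$ \emph{without} needing to track the exact positions. The only care required is that the variables shared \emph{across} earlier kernels are already constrained, so dropping those constraints (using the argument-removal inequality \eqref{eq:leqdelta:removal}) only increases the integral, which is consistent with seeking an upper bound; the remaining steps are the routine iterated one-dimensional integrations already rehearsed in Lemma \ref{lem:variance:step:2}.
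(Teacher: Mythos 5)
Your proof is correct, but it reaches the bound by a different mechanism than the paper. The paper's proof also reduces to $d=1$ by coordinate factorization, but then applies the factorization and argument-removal inequalities \eqref{eq:leqdelta:factorization} and \eqref{eq:leqdelta:removal} to \emph{discard all cross-kernel constraints}, e.g. $\leqone*{\ivar yr,\ivar z{k-r}} \leq \leqone*{\ivar yr}\,\leqone*{\ivar z{k-r}}$; after absorbing $\leqone*{\ivar yr}$ into $\leqone*{\ivar yk}$, the integral splits into a product of independent blocks, each evaluated exactly as $\cI_{\dsE}(1,k-r) = k-r+1$ etc.\ by Lemma \ref{lem:expectation:step:2}. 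You instead \emph{keep} the cross-constraints and integrate out the fresh variables of each kernel conditionally, invoking the $\cJ$-type estimate of Lemma \ref{lem:variance:step:2}, which is uniformly at most $k-m+1$ regardless of where the shared points sit (including the degenerate cases where the admissible window has width below $1$ or is empty, and the case $m=0$ where the estimate degenerates to $\cI_{\dsE}(1,k)=k+1$); this peels $\cI_4$ down to $\cI_3$, then to $\cI_2$, then to $\cI_{\dsE}$. Both routes yield the identical bound; the paper's is shorter because the decoupled blocks need no conditioning, while yours is in principle sharper (it retains constraints the paper throws away) but that sharpness is not exploited. One small imprecision: the fresh variables are not confined to "an interval of length at most $1$" — the window $[\max(\cdot)-1,\min(\cdot)+1]\cap[-1,1]$ can have width up to $2$ — it is the diameter constraint \emph{among} the fresh variables, handled by the $\cJ$ computation you cite, that produces the factor $k-m+1$ rather than $1$; since you invoke that computation explicitly, the argument stands.
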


\begin{proof}
	Similarly to the proof of Lemma \ref{lem:expectation:step:2}, we factorize the integral to obtain
	\begin{align*}
		\cI_2(d,k,r) = \cI_2(1,k,r)^d.
	\end{align*}
	Therefore we only have to consider the case $d=1$:
	\begin{align*}
		\cI_2(1,k,r) & = \smashoperator{\int\limits_{{[-1,1]}^{2k-r}}} \leqone*{\ivar yk} \leqone*{\ivar yr, \ivar z{k-r}} \id \ivar yk \id \ivar z{k-r}.
	\end{align*}
	Using \eqref{eq:leqdelta:factorization} and \eqref{eq:leqdelta:removal} we obtain
	\begin{align*}
		\leqone*{\ivar yr, \ivar z{k-r}} \leq \leqone*{\ivar yr} \cdot \leqone*{\ivar z{k-r}},
	\end{align*}
	for all $r \in \cbras*{0,\ldots,k}$.
	Hence
	\begin{align*}
		\cI_2(1,k,r) & \leq \smashoperator{\int\limits_{{[-1,1]}^{2k-r}}} \leqone*{\ivar yk} \leqone*{\ivar yr} \leqone*{\ivar z{k-r}} \id \ivar yk \id \ivar z{k-r}.
	\end{align*}
	Since $\leqone*{\ivar yk} \leqone*{\ivar yr} = \leqone*{\ivar yk}$ this integral factorizes into two integrals separating the $y$ and $z$ variables. Thus
	\begin{align*}
		\cI_2(1,k,r) \leq \int\limits_{[-1,1]^k} \leqone*{\ivar yk} \id \ivar yk \times \int\limits_{[-1,1]^{k-r}} \leqone*{\ivar z{k-r}} \id \ivar z{k-r}.
	\end{align*}
	The claim follows directly from Lemma \ref{lem:expectation:step:2}, since the two factors are given by $\cI_{\dsE}(1,k)$ and $\cI_{\dsE}(1,k-r)$.
	In the same manner we factorize the integrals $\cI_3$ and $\cI_4$.
\end{proof}

Finally, we simplify the bounds given by the previous lemma using only the number of variables in the integral, removing the dependencies on the specific choice of indices:

\begin{lemma}\label{lem:DF:step:3}
	For all $d \geq 1$, $k \geq 1$ and $p \in \cbras*{2,3,4}$ we denote by $q_p \in \cbras*{k,\ldots,pk}$ the number of variables in the integral $\cI_p$.
	\begin{enumerate}
		\item For all indices $r$ such that $2k-r = q_2$ it holds, that
			\begin{align}\label{eq:cI_2:opt}
				\cI_2(k,r) \leq \rbras*{(k+1)(q_2-k+1)}^d.
			\end{align}
		\item For all indices $r, s_Y, s_Z$ such that $3k-r-s = q_3$ it holds, that
			\begin{align}\label{eq:cI_3:opt}
				\cI_3(k,r,s_Y,s_Z) \leq \rbras*{(k+1)\rbras*{\frac{q_3-k}{2}+1}^2}^d	.
			\end{align}
		\item For all indices $r, s_Y, s_Z, m_Y, m_Z, m_W$ such that $4k-r-s-m = q_4$ it holds, that
			\begin{align}\label{eq:cI_4:opt}
				\cI_4(k,r,s_Y,s_Z,m_Y,m_Z,m_W) \leq \rbras*{(k+1)\rbras*{\frac{q_4-k}{3}+1}^3}^d.
			\end{align}
	\end{enumerate}
\end{lemma}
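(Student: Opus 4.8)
The plan is to feed the bounds from Lemma \ref{lem:DF:step:2} into the elementary arithmetic--geometric--mean inequality, after re-expressing everything through the single parameter $q_p$ that counts the integration variables.

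First I would record how $q_p$ constrains the decomposition indices. By the definition of the integrals \eqref{eq:cI_2}--\eqref{eq:cI_4}, the number of variables is $q_2 = 2k-r$, $q_3 = 3k-r-s$ and $q_4 = 4k-r-s-m$, with $s := s_Y + s_Z$ and $m := m_Y + m_Z + m_W$. Equivalently $k-r+1 = q_2-k+1$, $r+s = 3k-q_3$ and $r+s+m = 4k-q_4$. The min-constraints in the summations of Lemmas \ref{lem:decomp:3} and \ref{lem:decomp:4} force $s_Z \leq k - s_Y$ and $m_W \leq k - m_Y - m_Z$, hence $s,m \in \cbras*{0,\ldots,k}$; together with $r \in \cbras*{0,\ldots,k}$ this shows that each factor $k-r+1$, $k-s+1$, $k-m+1$ lies in $\cbras*{1,\ldots,k+1}$ and is in particular strictly positive, which is exactly what is needed for AM--GM to produce an upper bound.

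For part (1) the bound $\cI_2(k,r) \leq \rbras*{(k+1)(k-r+1)}^d$ from Lemma \ref{lem:DF:step:2} turns into \eqref{eq:cI_2:opt} by the direct substitution $k-r+1 = q_2-k+1$; no inequality is even needed. For parts (2) and (3) I would apply AM--GM to the positive factors. Writing $a := k-r+1$ and $b := k-s+1$, one computes $\tfrac{a+b}{2} = \tfrac{q_3-k}{2}+1$, so $ab \leq \rbras*{\tfrac{a+b}{2}}^2 = \rbras*{\tfrac{q_3-k}{2}+1}^2$; multiplying by $(k+1)$ and raising to the $d$-th power yields \eqref{eq:cI_3:opt}. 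Identically, adjoining the third factor $c := k-m+1$ gives $\tfrac{a+b+c}{3} = \tfrac{q_4-k}{3}+1$, whence the three-term inequality $abc \leq \rbras*{\tfrac{a+b+c}{3}}^3$ delivers \eqref{eq:cI_4:opt}.

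There is no genuine obstacle here. The only points that require care are the bookkeeping that converts the index sum into $q_p$, and the verification that the factors are nonnegative so that AM--GM is applied in the intended direction (bounding the product above by its balanced value). The inequality is tight precisely when all factors coincide, i.e.\ when the shared-variable indices are distributed evenly across the kernels, which explains why the equal-split expression is the natural simplified bound and why it is the sharpest one obtainable from Lemma \ref{lem:DF:step:2} without further information on the integrals.
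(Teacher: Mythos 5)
Your proposal is correct and follows essentially the same route as the paper: both start from the factorized bound of Lemma \ref{lem:DF:step:2} and then show that, for fixed $q_p$, the product $(k-r+1)(k-s+1)(k-m+1)$ is largest when the indices are split evenly. The only cosmetic difference is that you justify this last step by the arithmetic--geometric-mean inequality, whereas the paper phrases it as a constrained maximization of $g_3$ over $[0,k]^3$ subject to $4k-r-s-m=q_4$; these are the same fact.
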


\begin{proof}
	We give the proof only for the case $p = 4$; the other cases are similar.
	We note that it is sufficient to show the claim for $d = 1$: For fixed $k \geq 1$ and $q_4 \in \cbras*{k,\ldots,4k}$ we define $g_3:[0,k]^3 \rightarrow \RR$ by
	\begin{align*}
		g_3(r,s,m) := (k+1)(k-r+1)(k-s+1)(k-m+1).
	\end{align*}
	Maximizing over $[0,k]^3$, with respect to the condition $F(r,s,m) := 4k-r-s-m - q_4 \os{!}{=} 0$ yields the maximal value for 
	\begin{align*}
		r = s = m = \frac{4k-q_4}{3}.
	\end{align*}
	Thus, using Lemma \ref{lem:DF:step:2}, we have for all indices $r,s_Y,s_Z,m_Y,m_Z,m_W \in \cbras*{0,\ldots,k}$, with $4k-r-s-m = q_4$ the bound
	\begin{align*}
		\cI_4(k,r,s_Y,s_Z,m_Y,m_Z,m_W) \leq g\rbras*{\frac{4k-q_4}{3},\frac{4k-q_4}{3},\frac{4k-q_4}{3}},
	\end{align*}
	which is our claim.
\end{proof}

We are now in a position, to show the main result of this section, Theorem \ref{thm:derivative:bound}:

\begin{proof}[Proof of Theorem \ref{thm:derivative:bound} a) and c)]
	We give the proof only for the case $p = 4$; the other cases are similar.
	We reorganize the summation in the bound given by Lemma \ref{lem:DF:step:1} according to the number $q$ of variables in the integral $\cI_4$, i.e. 
	\begin{align*}
		& \E*{(D_xF_k)^4} \leq \bD_4(k) \sum \ldots \sum (t\delta_d)^{4k-r-s-m} \cI_4(d,k,r,s_Y,s_Z,m_Y,m_Z,m_W)\\
		& = \bD_4(k) \sum\limits_{q=k}^{4k} \sum \ldots \sum \1\cbras*{4k-r-s-m = q} (t\delta^d)^q \cI_4(d,k,r,s_Y,s_Z,m_Y,m_Z,m_W).
	\end{align*}
	Applying \eqref{eq:cI_4:opt}, yields
	\begin{align*}
		\E*{(D_xF_k)^4} \leq \bD_4(k) \sum\limits_{q=k}^{4k} (t\delta^d)^q \rbras*{(k+1)\rbras{\tfrac{q-k}{3}+1}^3}^d \sum \ldots \sum \1\cbras*{4k-r-s-m = q}.
	\end{align*}
	Finally, the last summations over the indicator yields a constant that does only depend on $k$ and $q$.
	Thus we redefine the constant $\bD_4(k)$ and the claim a) follows. The proof of claim c) is similar.
\end{proof}

\begin{proof}[Proof of Theorem \ref{thm:derivative:bound} b)]
	Since
	\begin{align}\label{eq:thm:derivative:bound:proof:DxFDxF-1:1}
		\E*{\rbras[\big]{(D_xF)(D_xF-1)}^2} & = \E*{(D_xF)^4} - 2\E*{(D_xF)^3} + \E*{(D_xF)^2}
	\end{align}
	we use the moment decomposition for $U$-statistics, Lemmas \ref{lem:decomp:2}, \ref{lem:decomp:3} and \ref{lem:decomp:4} on $D_xF$ to obtain the decomposition for $\E{\rbras{(D_xF)(D_xF-1)}^2}$.
	Recall that the decomposition of the second moment is given by
	\begin{align*}
		\E*{(D_xF)^2} = \sum\limits_{r=0}^k \bC(k,r) \smashoperator{\int\limits_{W^{2k-r}}} \leqdelta{\ivar yk,x}\leqdelta{\ivar yr, \ivar z{k-r},x} \id \mu^k(\ivar yk) \id \mu^{k-r}(\ivar z{k-r}).
	\end{align*}
	For $r = k$ the integral is equal to the integral in $\E*{D_xF}$ an further $\bC(k,k) = \tfrac{1}{k!}$, thus we rewrite this decomposition into
	\begin{align*}
		\E*{(D_xF)^2} & = \E*{(D_xF)}\\
		& \qquad + \sum\limits_{r=0}^{k-1} \bC(k,r) \smashoperator{\int\limits_{W^{2k-r}}} \leqdelta{\ivar yk,x)}\leqdelta{\ivar yr, \ivar z{k-r},x} \id \mu^k(\ivar yk) \id \mu^{k-r}(\ivar z{k-r}).
	\end{align*}
	Similarly, for $r=k$, $s_Y=k$, which forces us to set $s_Z=0$ by definition, the term in the decomposition of the third moment is equal to $\E*{D_xF}$.
	For $r=k$, $s_Y=k$, $m_Y=k$, which forces us to set $s_Z=0$, $m_Z = 0$ and $m_W = 0$, the term in the decomposition of the fourth moment is also equal to $\E*{D_xF}$.
	Therefore, these terms cancel out in the decomposition of \eqref{eq:thm:derivative:bound:proof:DxFDxF-1:1}. We further note, that these combinations are the only choices of indices in the decomposition yielding integrals that do only involve $k$ distinct variables. It follows immediately, that all other integrals appearing in the decomposition have at least $k+1$ distinct variables. Denoting the integrals in Lemmas \ref{lem:decomp:2}, \ref{lem:decomp:3} and \ref{lem:decomp:4} by $\cJ_2$, $\cJ_3$ resp. $\cJ_4$, we use the decomposition to rewrite the mixed moments.
	Since $D_xF_k \geq 0$ for all $x \in \RR^d$ we obtain the bound
	\begin{align*}
		& \E*{\rbras[\big]{(D_xF)(D_xF-1)}^2} = \sum\limits_{q=k+1}^{2k} \sum \1\cbras*{2k-r = q} \bC(k,r) \cJ_2(d,k,r)\\
		& - \sum\limits_{q=k+1}^{3k} \sum \ldots \sum \1\cbras*{3k-r-s = q} \bC(k,r,s_Y,s_Z) \cJ_3(k,r,s_Y,s_Z)\\
		& + \sum\limits_{q=k+1}^{4k} \sum \ldots \sum \1\cbras*{4k-r-s-m = q} \bC(k,r,s_Y,s_Z,m_Y,m_Z,m_W)\\
		& \qquad \qquad \times \cJ_4(d,k,r,s_Y,s_Z,m_Y,m_Z,m_W)\\
		\leq & \sum\limits_{q=k+1}^{2k} \sum \1\cbras*{2k-r = q} \bC(k,r) \cJ_2(d,k,r)\\
		& \sum\limits_{q=k+1}^{4k} \sum \ldots \sum \1\cbras*{4k-r-s-m = q} \bC(k,r,s_Y,s_Z,m_Y,m_Z,m_W)\\
		& \qquad \qquad \times \cJ_4(d,k,r,s_Y,s_Z,m_Y,m_Z,m_w)
	\end{align*}
	where we just removed the negative term to simplify the upper bound.
	Analysis similar to that in the proof of Theorem \ref{thm:derivative:bound} a) before shows
	\begin{align*}
		& \E*{\rbras[\big]{(D_xF)(D_xF-1)}^2}\\
		\leq ~& \bD_2(k) \sum\limits_{q=k+1}^{2k} (t\delta^d)^q \rbras*{(k+1)(q-k+1)}^d + \bD_4(k) \sum\limits_{q=k+1}^{4k} (t\delta^d)^q \rbras*{(k+1)\rbras*{\tfrac{q-k}{3}+1}^3}^d\\
		\leq ~& \rbras*{\bD_2(k) + \bD_4(k)} \sum\limits_{q=k+1}^{4k} (t\delta^d)^q \rbras*{(k+1)\rbras*{\tfrac{q-k}{3}+1}^3}^d,
	\end{align*}
	since $(q-k+1) \leq (\tfrac{q-k}{3}+1)^3$, defining $\bD_4^*(k) := \bD_2(k) + \bD_4(k)$ completes the proof.
\end{proof}

\subsection{Proof of Theorem \ref{thm:gaussian-limit} and \ref{thm:poisson-limit}: Gaussian and Poisson Limit}
\label{sec:proof:limits}

Let us first investigate the asymptotic behavior of the variance $\V*{F_k}$ in the three different phases determined by the limit of the expectation $\E*{F_k}$. Using the bound given by Lemma \ref{lem:variance} we obtain
\begin{align*}
	\E*{F_k} + (1-2\delta)^d R(d,k,t,\delta) \leq \V*{F_k} \leq \E*{F_k} + R(d,k,t,\delta),
\end{align*}
where we defined
\begin{align*}
	R(d,k,t,\delta) := t(t\delta^d)^k \sum\limits_{r = 1}^k \bC(k,r) (t\delta^d)^{k+1-r} \rbras*{\tfrac{2(k+2)(k+1-r)}{r+1}+r}^d,
\end{align*}
and note that $R(d,k,t,\delta) \geq 0$ holds for $d \geq 1$, $k \geq 1$, $t > 0$ and $\delta > 0$.

\begin{lemma}\label{lem:variance:limit}
	For all $k \geq 1$ the variance of the $k$-simplex counting functional satisfies
	\begin{align*}
		\lim\limits_{d \rightarrow \infty} \V*{F_k} = \lim\limits_{d \rightarrow \infty} \E*{F_k},
	\end{align*}
	 if the limit of the expectation is infinite \eqref{eq:phase:1}, a positive constant  \eqref{eq:phase:2} or zero \eqref{eq:phase:3}.
\end{lemma}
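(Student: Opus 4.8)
The plan is to combine the two-sided bound displayed just before the statement with a separate treatment of the three phases. From Lemma \ref{lem:variance} (rewritten through $R$) one has $0 \le \V*{F_k} - \E*{F_k} \le R(d,k,t,\delta)$, the lower bound being nonnegative because $1-2\delta>0$ and $R\ge 0$. Write $E_0 := \tfrac{1}{(k+1)!}t(t\delta^d)^k(k+1)^d$ for the leading expectation term, so that Lemma \ref{lem:expectation} gives $(1-2\delta)^d E_0 \le \E*{F_k}\le E_0$, and since $d\delta_d\to 0$ forces $(1-2\delta)^d\to 1$, the limit of $\E*{F_k}$ coincides with the limit of $E_0$ that defines the phases. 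In phase \eqref{eq:phase:1} nothing further is needed: the lower variance bound yields $\V*{F_k}\ge\E*{F_k}\to\infty$, hence $\lim\V*{F_k}=\infty=\lim\E*{F_k}$. For phases \eqref{eq:phase:2} and \eqref{eq:phase:3} it therefore suffices to prove $R\to 0$, since then $0\le\V*{F_k}-\E*{F_k}\le R\to 0$ squeezes the difference to zero.

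Second, I would reduce $R\to 0$ to the vanishing of each of its finitely many summands. Since $t\to\infty$ and $E_0$ is bounded in both phases, the identity $t(t\delta^d)^k=(k+1)!\,E_0(k+1)^{-d}$ shows in particular $(t\delta^d)^k=\tfrac{(k+1)!E_0}{t}(k+1)^{-d}\to 0$. Writing $a_r:=\tfrac{2(k+2)(k+1-r)}{r+1}+r$ for the exact base produced by Lemma \ref{lem:variance:step:2}, the $r$-th summand of $R$ equals $\bC(k,r)(k+1)!\,E_0\,b_r$ with $b_r:=(t\delta^d)^{k+1-r}\rbras*{\tfrac{a_r}{k+1}}^{d}$. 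Eliminating $t\delta^d$ through the same identity isolates the dimensional dependence in a single geometric factor $\rho_r^{d}$:
\[
	b_r=\rbras*{\tfrac{(k+1)!E_0}{t}}^{\frac{k+1-r}{k}}\rho_r^{d},\qquad \rho_r:=\frac{a_r}{(k+1)^{(2k+1-r)/k}}.
\]

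Third, the decisive point is to show that this geometric base satisfies $\rho_r\le 1$, i.e. $a_r\le(k+1)^{(2k+1-r)/k}$ for every $r\in\cbras*{1,\ldots,k}$, with equality exactly at $r=1$ (where $a_1=(k+1)^2$ and the exponent is $2$). Granting this, the factor $\rho_r^{d}$ stays bounded by $1$, while the prefactor $\rbras*{(k+1)!E_0/t}^{(k+1-r)/k}\to 0$ because $t\to\infty$, $E_0$ is bounded, and the exponent $(k+1-r)/k$ is strictly positive for $r\le k$; hence $b_r\to 0$, every summand $\bC(k,r)(k+1)!E_0\,b_r$ of $R$ vanishes, and so $R\to 0$, which closes phases \eqref{eq:phase:2} and \eqref{eq:phase:3}. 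I expect this elementary but genuinely quantitative comparison between the rational quantity $a_r$ and the exponential $(k+1)^{(2k+1-r)/k}$ to be the main obstacle, since crude estimates such as $(k+1)^{1/k}\ge 1+\tfrac{\ln(k+1)}{k}$ are too weak near $r=k$ and a more careful monotonicity argument in $r$ (the fast decay of $a_r$ against the slow decay of the exponent $2-\tfrac{r-1}{k}$) is required. Should the sharp inequality prove awkward, an alternative is available in phase \eqref{eq:phase:2}: there $\delta_d\to 0$ forces $\ln t=\omega(d)$, so $t^{-(k+1-r)/k}$ decays faster than any fixed geometric rate $\rho_r^{d}$ and annihilates each summand irrespective of whether $\rho_r\le 1$.
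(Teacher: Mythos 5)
Your proposal follows the paper's own route almost step for step: the same sandwich $0 \le \V*{F_k}-\E*{F_k} \le R$, the same immediate treatment of phase \eqref{eq:phase:1} via the lower variance bound, and in phases \eqref{eq:phase:2}--\eqref{eq:phase:3} the same rewriting of the $r$-th summand of $R$ into a bounded prefactor, a positive power of $(t\delta^d)^k(k+1)^d \rightarrow 0$, and a purely dimensional factor $\rho_r^d$; your $\rho_r$ is exactly the paper's $g_k(r)$. The one thing you do not do is prove the decisive inequality $\rho_r \le 1$ for $r \in \cbras*{1,\ldots,k}$: you correctly identify it as the crux, and even the equality case $r=1$, but you leave it as an acknowledged obstacle. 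That is a genuine gap, and it matters, because your fallback via $\ln t = \omega(d)$ is valid only in phase \eqref{eq:phase:2}, where the normalization $t(t\delta^d)^k(k+1)^d \rightarrow (k+1)!\theta$ together with $\delta \rightarrow 0$ forces $\ln t / d \rightarrow \infty$. In phase \eqref{eq:phase:3} nothing of the sort is guaranteed (take, say, $t=d$ and $\delta$ decaying fast enough that $E_0 \rightarrow 0$ geometrically), so an unbounded $\rho_r^d$ would not necessarily be absorbed by the vanishing prefactor; as written the argument is incomplete for that phase.

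The paper closes precisely this gap with a short convexity argument: writing $g_k(r) = \tfrac{2(k+2)(k+1-r)+r(r+1)}{(r+1)(k+1)(k+1)^{(k+1-r)/k}}$, it checks $g_k''(r)\ge 0$ on $[1,k+1]$ and the endpoint values $g_k(1)=g_k(k+1)=1$, whence $g_k\le 1$ on the whole interval and in particular on $\cbras*{1,\ldots,k}$. If you supply this (or any other proof of $a_r \le (k+1)^{(2k+1-r)/k}$), your argument becomes complete and coincides with the paper's; the phase-\eqref{eq:phase:2} shortcut via $\ln t = \omega(d)$ is a correct additional observation but cannot replace the inequality in phase \eqref{eq:phase:3}.
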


\begin{proof}
	Let us first assume that the limit of the expectation is either a positive constant \eqref{eq:phase:2} or zero \eqref{eq:phase:3}. Thus we have
	\begin{align*}
		\lim\limits_{d \rightarrow \infty} \tfrac{1}{(k+1)!} t(t\delta^d)^k (k+1)^d = \theta \in [0,\infty)
	\end{align*}
	and it follows, that
	\begin{align*}
		R(d,k,t,\delta) & = t(t\delta^d)^k \tfrac{(k+1)^d}{(k+1)^d} \sum\limits_{r=1}^k \bC(k,r) \rbras*{(t \delta^d)^k\tfrac{(k+1)^d}{(k+1)^d}}^{\frac{k+1-r}{k}} \rbras*{\frac{2(k+2)(k+1-r)}{r+1}+r}^d\\
		& = t(t\delta^d)^k (k+1)^d \sum\limits_{r=1}^k \bC(k,r) \rbras*{(t\delta^d)^k (k+1)^d}^{\frac{k+1-r}{k}} \rbras*{\tfrac{2(k+2)(k+1-r)+r(r+1)}{(r+1)(k+1)(k+1)^{\frac{k+1-r}{k}}}}^d.
	\end{align*}
	We define the function $g_k:[0,\infty) \rightarrow [0,\infty)$ by
	\begin{align*}
		g_k(r) := \tfrac{2(k+2)(k+1-r)+r(r+1)}{(r+1)(k+1)(k+1)^{\frac{k+1-r}{k}}}
	\end{align*}
	and a straightforward calculation shows that $g_k''(r) \geq 0$ implying convexity of $g_k$.
	Using $g_k(1) = 1$ and $g_k(k+1) = 1$ we obtain
	\begin{align*}
		g_k(\alpha + (1-\alpha)(k+1)) \leq \alpha g_k(1) + (1-\alpha)g_k(k+1) \leq 1,
	\end{align*}
	for all $\alpha \in [0,1]$ which implies $g_k(r) \in [0,1]$ for all $r \in \cbras*{1,\ldots,k+1}$. Therefore we have
	\begin{align*}
		R(d,k,t,\delta) & \leq t(t\delta^d)^k (k+1)^d \sum\limits_{r=1}^k \bC(k,r) \rbras*{(t\delta^d)^k (k+1)^d}^{\frac{k+1-r}{k}}.
	\end{align*}
	Since $t \rightarrow \infty$ and we assume \eqref{eq:phase:2} or \eqref{eq:phase:3} we have $(t\delta^d)^k(k+1)^d \rightarrow 0$, thus
	\begin{align*}
		R(d,k,t,\delta) & \leq \underbrace{t(t\delta^d)^k (k+1)^d}_{\rightarrow (k+1)!\theta} \sum\limits_{r=1}^k \bC(k,r) \rbras*{\underbrace{(t\delta^d)^k (k+1)^d}_{\rightarrow 0}}^{\frac{k+1-r}{k}} \rightarrow 0,
	\end{align*}
	which yields our claim.
	
	If the expectation tends to infinity \eqref{eq:phase:1} we directly obtain the claim from the lower variance bound.
\end{proof}

In the next step we check, that our $k$-simplex counting functional $F_k$ or its standardization $\widetilde{F_k}$ satisfy the condition $F \in \dom{D}$ resp. $\widetilde{F_k} \in \dom{D}$ imposed through the Malliavin-Stein method:

\begin{lemma}\label{lem:domD}
	For all $d \geq 1$ and all $k \geq 1$ there exists a constant $\bD^{\mathrm{dom}}(k) \in (0,\infty)$ only depending on $k$ such that
	\begin{align*}
		\int\limits_{W} \E*{(D_xF_k)^2} \leq \bD^{\mathrm{dom}}(k)\V*{F_k}.
	\end{align*}
	Furthermore, if the expectation tends to infinity \eqref{eq:phase:1}, then
	\begin{align*}
		\widetilde{F_k} \in \dom{D}.
	\end{align*}
	If the expectation converges to a positive constant \eqref{eq:phase:2}, then
	\begin{align*}
		F_k \in \dom{D}.
	\end{align*}
\end{lemma}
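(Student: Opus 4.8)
The plan is to establish the comparison inequality first and then read off the two domain memberships from it, using that the difference operator is insensitive to affine standardisation.

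First I would bound the integral $\int_W \E*{(D_xF_k)^2}\,\mu(\id x)$ from above. Since the estimate in Theorem \ref{thm:derivative:bound}~a) for $p=2$ is uniform in $x$ and $\mu(W)=t\Lambda_d(W)=t$, integrating gives $\int_W \E*{(D_xF_k)^2}\,\mu(\id x) \leq \bD_2(k)\,t\sum_{q=k}^{2k}(t\delta^d)^q\rbras*{(k+1)(q-k+1)}^d$. For the other direction I would use the lower bound for $\V*{F_k}$ from Lemma \ref{lem:variance}: after the change of index $r\mapsto q:=2k+1-r$ it is a sum over the same powers $(t\delta^d)^q$, $q\in\cbras*{k+1,\ldots,2k}$, together with the leading contribution $\E*{F_k}\geq (1-2\delta)^d (k+1)!^{-1}\, t(t\delta^d)^k(k+1)^d$ carrying the exponent $q=k$.

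Both bounds are thus polynomials in $(t\delta^d)$ indexed by the same exponents $q$, so it suffices to match the geometric factors $q$ by $q$. For $q\geq k+1$, writing $r=2k+1-q\in\cbras*{1,\ldots,k}$, the base in the integral bound is $(k+1)(q-k+1)=(k+1)(k+2-r)$, while the base in the variance lower bound is $\tfrac{2(k+2)(k+1-r)}{r+1}+r$; since the former is at most $(k+1)^2$ and the latter is at least $r\geq 1$, a crude estimate already yields a ratio of at most $(k+1)^2$, uniformly in $q$ and $d$. For the leading exponent $q=k$ I would compare $\bD_2(k)t(t\delta^d)^k(k+1)^d$ directly against $\E*{F_k}$. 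Collecting these term-by-term estimates and absorbing the combinatorial constants $\bC(k,r)$ and $\bD_2(k)$ into one constant yields $\int_W \E*{(D_xF_k)^2}\,\mu(\id x)\leq \bD^{\mathrm{dom}}(k)\,\V*{F_k}$. The domain statements then follow at once: $\E*{F_k^2}=\V*{F_k}+\rbras*{\E*{F_k}}^2<\infty$ and the comparison give $F_k\in\dom{D}$, which persists in the limit in phase \eqref{eq:phase:2} since $\V*{F_k}\to\theta$ by Lemma \ref{lem:variance:limit}; and since $\widetilde{F_k}$ is an affine function of $F_k$ with deterministic coefficients, $D_x\widetilde{F_k}=\rbras*{\V*{F_k}}^{-1/2}D_xF_k$ and $\E*{\widetilde{F_k}^2}=1$, so $\int_W\E*{(D_x\widetilde{F_k})^2}\,\mu(\id x)=\rbras*{\V*{F_k}}^{-1}\int_W\E*{(D_xF_k)^2}\,\mu(\id x)\leq\bD^{\mathrm{dom}}(k)$, finite even as $\V*{F_k}\to\infty$ in phase \eqref{eq:phase:1}, giving $\widetilde{F_k}\in\dom{D}$.

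The main obstacle is the leading $q=k$ term: there the variance lower bound carries the window factor $(1-2\delta)^d$ whereas the integral bound does not, so the naive ratio is $\bD_2(k)(k+1)!/(1-2\delta)^d$. This is precisely where the standing assumption $d\delta_d\to 0$ enters, forcing $(1-2\delta_d)^d\to 1$; the convergent sequence $(1-2\delta_d)^d$ is then bounded below by a positive constant, so this factor contributes only a bounded multiplicative correction and $\bD^{\mathrm{dom}}(k)$ can indeed be taken independent of $d$.
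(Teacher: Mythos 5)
Your overall strategy --- compare an upper bound on $\int_W\E*{(D_xF_k)^2}\,\mu(\id x)$ term by term against the lower bound on $\V*{F_k}$ --- has a genuine gap at the step where you match the dimension-dependent factors. You argue that because the base $(k+1)(q-k+1)$ in the integral bound and the base $\tfrac{2(k+2)(k+1-r)}{r+1}+r$ in the variance bound have a ratio bounded by $(k+1)^2$, the comparison is ``uniform in $q$ and $d$''. But the quantities being compared are the $d$-th powers of these bases, and $(a/b)^d$ is bounded in $d$ only when $a\leq b$. That inequality fails for intermediate exponents: take $k=2$ and $q=k+1=3$, so $r=2k+1-q=2$; the integral bound carries $\rbras*{(k+1)(q-k+1)}^d=6^d$ while the matching variance term carries only $\rbras*{\tfrac{2(k+2)\cdot 1}{3}+2}^d=\rbras*{\tfrac{14}{3}}^d$, and the ratio $(9/7)^d$ diverges. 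One cannot repair this by borrowing from neighbouring exponents either, since for suitable choices of $t\delta^d$ (e.g. $t\delta^d\asymp 3^{-d/2}$ in this example) the offending term dominates both sums. The root cause is that the factorization bound of Lemma \ref{lem:DF:step:2} is strictly lossy compared with the exact values of the variance integrals computed in Lemma \ref{lem:variance:step:2}, so the two bounds you are comparing are not commensurable. (A secondary issue: your treatment of the $q=k$ term produces a factor $(1-2\delta)^{-d}$, so the resulting constant depends on the sequence $\delta$ and not only on $k$ as the lemma asserts.)

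The paper's proof sidesteps all of this with an exact identity rather than a pair of bounds: applying the decomposition of Lemma \ref{lem:decomp:2} to the order-$k$ $U$-statistic $D_xF_k$ and then integrating over $x\in W$ against $\mu$, one may rename $x$ as an additional $y$-variable shared by both kernels; after shifting the index $r\mapsto r+1$, the resulting integrals are \emph{identical} to the integrals in the variance decomposition of $F_k$ from Corollary \ref{cor:decomp:var}, only with the combinatorial weights $\bC(k,r-1)$ in place of $\bC(k+1,r)$. Hence $\int_W\E*{(D_xF_k)^2}\,\mu(\id x)\leq \max_{r}\tfrac{\bC(k,r-1)}{\bC(k+1,r)}\cdot\V*{F_k}$, with a constant that is a pure ratio of combinatorial constants, valid for every $d\geq 1$ and with no assumption on $\delta$. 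Your derivation of the two domain memberships from the comparison inequality (rescaling via $D_x\widetilde{F_k}=(\V*{F_k})^{-1/2}D_xF_k$ and invoking Lemma \ref{lem:variance:limit}) is correct and matches the paper; it is only the comparison inequality itself that needs the paper's argument.
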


\begin{proof}
	We use the moment decomposition Lemma \ref{lem:decomp:2} on $D_xF$ to obtain
	\begin{align*}
		& \int\limits_{W} \E*{(D_xF_k)^2} \id \mu(x)\\
		= & \int\limits_{W} \sum\limits_{r=0}^{k} \bC(k,r) \smashoperator{\int\limits_{W^{2k-r}}} \leqdelta*{\ivar yk,x} \leqdelta*{\ivar yr, \ivar z{k-r}, x} \id \mu^k(\ivar yk) \id \mu^{k-r}(\ivar z{k-r}) \id \mu(x)\\
		= & \sum\limits_{r=0}^k \bC(k,r) \smashoperator{\int\limits_{W^{2k-r+1}}} \leqdelta*{\ivar yk,x} \leqdelta*{\ivar yr, \ivar z{k-r}, x} \id \mu^k(\ivar yk) \id \mu^{k-r}(\ivar z{k-r}) \id \mu(x)
	\end{align*}
	Since $x$ is used in both kernels we rename the variables $y_i \rightarrow y_{i+1}$ for all $i \in \cbras*{0,\ldots,k}$ and $x \rightarrow y_0$ and shift the index $r$ of the sum to obtain
	\begin{align*}
		& \int\limits_{W} \E*{(D_xF_k)^2} \id \mu(x)\\
		= & \sum\limits_{r=0}^k \bC(k,r) \smashoperator{\int\limits_{W^{2k-r+1}}} \leqdelta*{\ivar y{k+1}} \leqdelta*{\ivar y{r+1}, \ivar z{k-r}} \id \mu^{k+1}(\ivar y{k+1}) \id \mu^{k-r}(\ivar z{k-r})\\
		= & \sum\limits_{r=1}^{k+1} \bC(k,r-1) \smashoperator{\int\limits_{W^{2(k+1)-r}}} \leqdelta*{\ivar y{k+1}} \leqdelta*{\ivar y{r}, \ivar z{k+1-r}} \id \mu^{k+1}(\ivar y{k+1}) \id \mu^{k+1-r}(\ivar z{k+1-r})\\
		= & \sum\limits_{r=1}^{k+1} \tfrac{\bC(k,r-1)\bC(k+1,r)}{\bC(k+1,r)} \smashoperator{\int\limits_{W^{2(k+1)-r}}} \leqdelta*{\ivar y{k+1}} \leqdelta*{\ivar y{r}, \ivar z{k+1-r}} \id \mu^{k+1}(\ivar y{k+1}) \id \mu^{k+1-r}(\ivar z{k+1-r}).
	\end{align*}
	Defining 
	\begin{align*}
		\bD^{\mathrm{dom}}(k) = \max\limits_{r=1}^{k+1} \frac{\bC(k,r-1)}{\bC(k+1,r)}
	\end{align*}
	the first claim follows immediately. Using Lemma \ref{lem:variance:limit} completes the proof.
\end{proof}

We proceed to derive the upper bounds for $\gamma_1$, $\gamma_2$ and $\gamma_{3,N}$ resp. $\gamma_{3,P}$.

\begin{lemma}\label{lem:gamma:bound}
	For all $d \geq 1$ and all $k \geq 1$ the error terms in the Malliavin-Stein limit theorems are bounded by
	\begin{align*}
		\gamma_1(F_k) & \ll 4^d t(t\delta^d)^{k+\frac{3}{2}} (k+1)^{4d} \sum\limits_{q=0}^{12k-6} (t\delta^d)^{\frac{q}{4}},\\
		\gamma_2(F_k) & \ll 4^d t(t\delta^d)^{k+1} k^{4d} \sum\limits_{q=0}^{6k-6}(t\delta^d)^{\frac{q}{2}},\\
		\gamma_{3,P}(F_k) & \ll t(t\delta^d)^{k+\frac{1}{2}} (k+1)^{3d} \sum\limits_{q=0}^{4k-1} (t\delta^d)^{\frac{q}{2}}.
	\end{align*}
	The error terms for the standardized $k$-simplex counting functional are bounded by
	\begin{align*}
		\gamma_1(\widetilde{F_k}) & \ll \rbras*{\V*{F_k}}^{-2} 4^d t(t\delta^d)^{k+\frac{3}{2}} (k+1)^{4d} \sum\limits_{q=0}^{12k-6} (t\delta^d)^{\frac{q}{4}},\\
		\gamma_2(\widetilde{F_k}) & \ll \rbras*{\V*{F_k}}^{-2} 4^d t(t\delta^d)^{k+1} k^{4d} \sum\limits_{q=0}^{6k-6}(t\delta^d)^{\frac{q}{2}},\\
		\gamma_{3,N}(\widetilde{F_k}) & \ll \rbras*{\V*{F_k}}^{-\frac{3}{2}} \bC t(t\delta^d)^k (k+1)^{3d} \sum\limits_{q=0}^{2k} (t\delta^d)^q.
	\end{align*}
\end{lemma}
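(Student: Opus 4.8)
The plan is to feed the moment bounds of Theorem \ref{thm:derivative:bound} into the definitions of $\gamma_1,\gamma_2,\gamma_{3,N},\gamma_{3,P}$, to perform the (elementary) integrations against $\mu=t\Lambda_d$ over $W$ resp.\ $W^3$, and then to read off the orders. Since $D_xF_k\geq 0$ one has $\Eabs*{D_xF_k}^p=\E*{(D_xF_k)^p}$, so all occurring expectations are covered by Theorem \ref{thm:derivative:bound}. Two reductions will be used throughout. First, in each moment bound the coefficient $\rbras*{(k+1)\rbras*{\tfrac{q-k}{p-1}+1}^{p-1}}^d$ is increasing in $q$ and takes its maximum at the right endpoint of the $q$-range, where $\tfrac{q-k}{p-1}+1=k+1$; hence it is bounded by $(k+1)^{pd}$ (and by $k^{4d}$ for the second-order bound in part c)), which pulls the dimension factor out of the $q$-sum and turns each sum into $(t\delta^d)^{q_{\min}}\sum_i (t\delta^d)^i$. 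Second, for nonnegative $c_\ell$ the inequality $\sqrt{\sum_\ell c_\ell (t\delta^d)^\ell}\leq\rbras*{\max_\ell c_\ell}^{1/2}\sum_\ell (t\delta^d)^{\ell/2}$ (subadditivity of the root) will convert a product of two $q$-sums under a square root into a single sum in half-integer powers of $t\delta^d$; rewriting $(t\delta^d)^{\ell/2}$ as $(t\delta^d)^{q/4}$ with $q=2\ell$ then matches the stated ranges $\sum_{q=0}^{12k-6}$, $\sum_{q=0}^{6k-6}$, $\sum_{q=0}^{4k-1}$.

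For $\gamma_1$ and $\gamma_2$ the decisive feature is the indicator $\leqdelta*{x_i,x_3}$ carried by the fourth moment of the second-order difference operator in part c). Inserting the bounds for $\E*{(D^2_{x_1,x_3}F_k)^4}$ and $\E*{(D^2_{x_2,x_3}F_k)^4}$ (and, for $\gamma_1$, also $\E*{(D_{x_1}F_k)^4},\E*{(D_{x_2}F_k)^4}$ from part a)), I would note that every positive power of an indicator equals the indicator, so after taking the root the factor $\leqdelta*{x_1,x_3}\leqdelta*{x_2,x_3}$ survives in front of an $x$-independent sum. Integrating out $x_1,x_2$ through $\int_W\leqdelta*{x_i,x_3}\,\mu(\id x_i)=t\int_W\1\cbras*{\unorm*{x_i-x_3}\leq\delta}\,\id x_i\leq t(2\delta)^d$ and then $x_3$ over $W$ yields the prefactor $t\cdot(t(2\delta)^d)^2=4^d t^3\delta^{2d}$, which is precisely the source of the $4^d$. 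Bounding the remaining coefficients by $(k+1)^{4d}$ resp.\ $k^{4d}$, reassembling the powers of $t$ and $\delta$ into $t\,(t\delta^d)^{\alpha}$, and applying the root inequality then gives the two bounds; an arithmetic check should confirm $\alpha=k+\tfrac32$ for $\gamma_1$ and $\alpha=k+1$ for $\gamma_2$, with top degrees $6k-3$ (before the root, hence $12k-6$ after reindexing) and $3k-3$ (hence $6k-6$).

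For $\gamma_{3,N}$ and $\gamma_{3,P}$ there is no indicator to localise the $x$-integration: the bounds of parts a) and b) are uniform in $x\in W$, so the inner expectation factors out of $\int_W\mu(\id x)=t\,\Lambda_d(W)=t$, which explains the missing $4^d$. For $\gamma_{3,N}$ I would use the third-moment bound (part a), $p=3$) directly, bound its coefficient by $(k+1)^{3d}$, and arrive at $\gamma_{3,N}(F_k)\ll t\,(t\delta^d)^k(k+1)^{3d}\sum_{q=0}^{2k}(t\delta^d)^q$. For $\gamma_{3,P}$ I would combine part b) for $\E*{\rbras[\big]{(D_xF_k)(D_xF_k-1)}^2}$ with part a), $p=2$, for $\E*{(D_xF_k)^2}$, multiply the two sums under the square roots (whose combined top degree is $k+(3k-1)=4k-1$), bound the joint coefficient by $(k+1)^{3d}$, and invoke the root inequality to reach $t\,(t\delta^d)^{k+\frac12}(k+1)^{3d}\sum_{q=0}^{4k-1}(t\delta^d)^{q/2}$.

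Finally, the estimates for the standardized functional require no further integration: from $D_x\widetilde{F_k}=\rbras*{\V*{F_k}}^{-1/2}D_xF_k$ and the analogous identity for the second-order operator, the homogeneity of the $\gamma$-functionals gives $\gamma_1(\widetilde{F_k})=\rbras*{\V*{F_k}}^{-2}\gamma_1(F_k)$, $\gamma_2(\widetilde{F_k})=\rbras*{\V*{F_k}}^{-2}\gamma_2(F_k)$ and $\gamma_{3,N}(\widetilde{F_k})=\rbras*{\V*{F_k}}^{-3/2}\gamma_{3,N}(F_k)$, so multiplying the bounds above by the corresponding powers of $\V*{F_k}$ finishes the proof. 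I expect the main obstacle to be purely the bookkeeping in the second and third paragraphs: verifying that the coefficient maxima really sit at the endpoints, tracking that the merged $q$-sums carry exactly the degrees $6k-3$ resp.\ $4k-1$ before the root, and checking that the powers of $t$ and $\delta$ collapse into $t\,(t\delta^d)^{\alpha}$ with the advertised exponents $k+\tfrac32$, $k+1$, $k$ and $k+\tfrac12$.
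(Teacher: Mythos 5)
Your proposal is correct and follows essentially the same route as the paper's proof: insert the moment bounds of Theorem \ref{thm:derivative:bound}, keep the indicators $\leqdelta*{x_1,x_3}\leqdelta*{x_2,x_3}$ to localise the integration and produce the factor $4^d t^3\delta^{2d}$, bound the dimension-dependent coefficients by their values at the right endpoint of the $q$-range, merge the sums via subadditivity of the root, and rescale by powers of $\V*{F_k}$ for the standardized functional. The only blemish is the intermediate phrase ``top degree $6k-3$ before the root'' for $\gamma_1$ (the combined $q$-range has width $12k-6$ already before taking the fourth root), but your final ranges and exponents all match the statement.
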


\begin{proof}
	Since the Malliavin difference operator is linear and invariant under addition of a constant we have $D_x\widetilde{F_k} = {D_xF_k}/{\sqrt{\V*{F_k}}}$. Therefore we can calculate the estimates for $F_k$ first and re-scale them later to obtain the results for $\widetilde{F_k}$.
	Fix $x_1, x_2, x_3 \in W$.
	Using \eqref{eq:derivative:bound:DxF} and \eqref{eq:derivative:bound:DxxF} we bound the integrand of $\gamma_1$ by
	\begin{align*}
		& \E*{(D_{x_1,x_3}^2F)^4}\E*{(D_{x_2,x_3}^2 F)^4} \E*{(D_{x_1}F)^4}\E*{(D_{x_2}F)^4}\\
		\ll ~ & \bD_4(k)^2 \bD_4'(k)^2 \leqdelta*{x_1,x_3}\leqdelta*{x_2,x_3}\sum\limits_{q_1,q_2=k-1}^{4(k-1)}\sum\limits_{q_3,q_4=k}^{4k} (t\delta^d)^{q_1+q_2+q_3+q_4}\\
		& \times \rbras*{(k+1)k}^{2d} \rbras*{\rbras*{\tfrac{q_1-(k-1)}{3}+1} \rbras*{\tfrac{q_2-(k-1)}{3}+1} \rbras*{\tfrac{q_3-k}{3}+1} \rbras*{\tfrac{q_4-k}{3}+1}}^{3d}\\
		\ll ~ & \leqdelta*{x_1,x_3}\leqdelta*{x_2,x_3} \sum\limits_{q_1,q_2=k-1}^{4(k-1)}\sum\limits_{q_3,q_4=k}^{4k} (t\delta^d)^{q_1+q_2+q_3+q_4} (k+1)^{16d},
	\end{align*}
	and the integrand of $\gamma_2$ by
	\begin{align*}
		& \E*{(D_{x_1,x_3}^2 F)^4}\E*{(D_{x_2,x_3}^2 F)^4}\\
		\ll ~ & \bD_4'(k)^2 \leqdelta*{x_1,x_3} \leqdelta*{x_2,x_3} \smashoperator{\sum\limits_{q_1,q_2=k-1}^{4(k-1)}} (t\delta^d)^{q_1+q_2} k^{2d} \rbras*{\rbras*{\tfrac{q_1-(k-1)}{3}+1} \rbras*{\tfrac{q_2-(k-1)}{3}+1}}^{3d}\\
		\ll ~ & \leqdelta*{x_1,x_3}\leqdelta*{x_2,x_3} \smashoperator{\sum\limits_{q_1,q_2=k-1}^{4(k-1)}}(t\delta^d)^{q_1+q_2} k^{8d}.
	\end{align*}
	Note that
	\begin{align*}
		\smashoperator{\int\limits_{W^3}} \leqdelta*{x_1,x_3}\leqdelta*{x_2x_3} \id\mu^3(x_1,x_2,x_3) \leq \Lambda_d(W) t^3\delta^{2d}4^d = t^3\delta^{2d}4^d.
	\end{align*}
	Since $\sqrt[n]{x}$ is subadditive for all $x > 0$ and $n > 1$, it follows that
	\begin{align*}
		\gamma_1(F_k) & \ll \smashoperator{\int\limits_{W^3}} \leqdelta*{x_1,x_3}\leqdelta*{x_2x_3} \id\mu^3(x_1,x_2,x_3) \smashoperator[l]{\sum\limits_{q_1,q_2=k-1}^{4(k-1)}}\smashoperator[r]{\sum\limits_{q_3,q_4=k}^{4k}} (t\delta^d)^{\frac{q_1+q_2+q_3+q_4}{4}} (k+1)^{4d}\\
		& \ll 4^d t^3\delta^{2d} (k+1)^{4d} \sum\limits_{q=4k-2}^{16k-8} (t\delta^d)^{\frac{q}{4}},
	\end{align*}
	and
	\begin{align*}
		\gamma_2(F_k) & \ll \smashoperator{\int\limits_{W^3}} \leqdelta*{x_1,x_3}\leqdelta*{x_2x_3} \id\mu^3(x_1,x_2,x_3) \sum\limits_{q_1,q_2=k-1}^{4(k-1)}(t\delta^d)^{\frac{q_1+q_2}{2}} k^{4d}\\
		& \ll 4^d t^3\delta^{2d} k^{4d} \sum\limits_{q=2k-2}^{8k-8}(t\delta^d)^{\frac{q}{2}},
	\end{align*}
	where we have simplified the sums using only the possible exponents of $(t\delta^d)$, since the number of terms with the corresponding exponent in the double sum does only depend on $k$.
	Shifting the indices of the sums such that the summations start at $q = 0$ establishes the $\gamma_1$ and $\gamma_2$ bounds for $F_k$ and re-scaling yields the corresponding bounds for $\widetilde{F_k}$.
	
	Fix $x \in W$. 
	Using \eqref{eq:derivative:bound:DxFDxF-1} and \eqref{eq:derivative:bound:DxF} we bound the integrand of $\gamma_{3,P}$ by
	\begin{align*}
		& \E*{\rbras*{(D_xF_k)(D_xF_k-1)}^2} \E*{(D_xF_k)^2}\\
		\ll ~ & \bD_4^*(k) \sum\limits_{q_1=k+1}^{4k} \sum\limits_{q_2=k}^{2k} (t\delta^d)^{q_1+q_2} (k+1)^{2d} \rbras*{\tfrac{q_1-k}{3}+1}^{3d}\rbras*{q_2-k+1}^{d}\\
		\ll ~ & \sum\limits_{q_1=k+1}^{4k} \sum\limits_{q_2=k}^{2k} (t\delta^d)^{q_1+q_2} (k+1)^{6d}.
	\end{align*}
	It follows that
	\begin{align*}
		\gamma_{3,P}(F_k) & \ll \int\limits_{W} 1 \id\mu(x) \sum\limits_{q_1=k+1}^{4k} \sum\limits_{q_2=k}^{2k} (t\delta^d)^{\frac{q_1+q_2}{2}} (k+1)^{3d}\\
		& \ll t (k+1)^{3d} \sum\limits_{q=2k+1}^{6k} (t\delta^d)^{\frac{q}{2}}.
	\end{align*}
	
	Using \eqref{eq:derivative:bound:DxF} together with $D_xF_k \geq 0$, we bound the integrand of $\gamma_{3,N}$ by
	\begin{align*}
		\Eabs*{(D_xF)^3} & \ll \bD_3(k) \sum\limits_{q=k}^{3k}(t\delta^d)^q (k+1)^d \rbras*{\tfrac{q-k}{2}+1}^{2d}\\
			& \ll \sum\limits_{q=k}^{3k} (t\delta^d)^q (k+1)^{3d}.
	\end{align*}
	It follows that
	\begin{align*}
		\gamma_{3,N}(F_k) & \ll \int\limits_{W} 1 \id\mu(x) \sum\limits_{q=k}^{3k} (t\delta^d)^q (k+1)^{3d}\\
		& \ll t(k+1)^{3d} \sum\limits_{q=k}^{3k} (t\delta^d)^q.
	\end{align*}
	Re-scaling this bound for $\widetilde{F_k}$ completes the proof.
\end{proof}

We are now in a position to use the Malliavin-Stein method, in particular Theorem \ref{thm:malliavin-gauss-limit} and Theorem \ref{thm:malliavin-poisson-limit}, to prove our main results, the central limit theorem and the Poisson limit theorem for the $k$-simplex counting functional.

\begin{proof}[Proof of Theorem \ref{thm:gaussian-limit}]
	Assume, that the expectation of $F_k$ tends to infinity \eqref{eq:phase:1}, i.e.
	\begin{align*}
		\lim\limits_{d \rightarrow \infty}\frac{1}{(k+1)!}t(t\delta^d)^k (k+1)^d = \infty.
	\end{align*}
	Lemma \ref{lem:variance:limit} yields $\V*{F_k} \rightarrow \infty$ and Lemma \ref{lem:domD} gives $\widetilde{F_k} \in \dom{D}$. This enables us, to use the Malliavin-Stein method to derive a bound on the Wasserstein-distance between $\widetilde{F_k}$ and a standard Gaussian distributed random variable $\cN(0,1)$ from Theorem \ref{thm:malliavin-gauss-limit}.
	We have to distinguish the following three cases based on the limit of $(t\delta^d)$:
	
	\textbf{Case 1:} If $(t\delta^d) \rightarrow 0$, the variance is bounded from below by
	\begin{align*}
		t (t\delta^d)^k(k+1)^d \ll \V*{F_k}.
	\end{align*}
	It follows that
	\begin{align*}
		\gamma_1(\widetilde{F_k}) & \ll 4^d (k+1)^{2d} t^{-1} (t\delta^d)^{\frac{3}{2}-k},\\
		\gamma_2(\widetilde{F_k}) & \ll 4^d k^{2d} \rbras*{\tfrac{k}{k+1}}^{2d} t^{-1} (t\delta^d)^{1-k},\\
		\gamma_{3,N}(\widetilde{F_k}) & \ll (k+1)^{\frac{3d}{2}} t^{-\frac{1}{2}} (t\delta^d)^{-\frac{k}{2}},
	\end{align*}
	and further
	\begin{align*}
		\sqrt{\gamma_1} + \sqrt{\gamma_2} + \gamma_{3,N}
		\ll
		\begin{cases}
			t^{-\frac{1}{2}}(t\delta^d)^{-\frac{k}{2}}2^d (k+1)^d, & k \leq 3,\\
			t^{-\frac{1}{2}}(t\delta^d)^{-\frac{k}{2}}(k+1)^{\frac{3d}{2}}, & \geq 3.
		\end{cases}
	\end{align*}
	Since $\E*{F_k} \ll t(t\delta^d)^k(k+1)^d$ it follows that 
	\begin{align*}
		t^{-\frac{1}{2}} (t \delta^d)^{-\frac{k}{2}}(k+1)^{-\frac{d}{2}} \ll \rbras*{\E*{F_k}}^{-\frac{1}{2}},
	\end{align*}
	which yields the desired result.
	
	\textbf{Case 2:} If $(t\delta^d) \rightarrow c \in (0,\infty)$, the variance is bounded from below by
	\begin{align*}
		t (k^2+2k)^d \ll \V*{F_k} 
	\end{align*}
	It follows that
	\begin{align*}
		\gamma_1(\widetilde{F_k}) & \ll 4^d t^{-1} \rbras*{1+\tfrac{1}{k^2+2k}}^{2d},\\
		\gamma_2(\widetilde{F_k}) & \ll 4^d t^{-1} \rbras*{\tfrac{k^2}{k^2+2k}}^{2d},\\
		\gamma_{3,N}(\widetilde{F_k}) & \ll t^{-\frac{1}{2}} \rbras*{1+\tfrac{1}{k^2+2k}}^{\frac{3d}{2}},
	\end{align*}
	and further
	\begin{align*}
		\sqrt{\gamma_1} + \sqrt{\gamma_2} + \gamma_{3,N}
		\ll
		2^d t^{-\frac{1}{2}} \rbras*{1+\tfrac{1}{k^2+2k}}^{d}.
	\end{align*}
	\textbf{Case 3:} If $(t\delta^d) \rightarrow \infty$, the variance is bounded from below by
	\begin{align*}
		t(t\delta^d)^{2k}(k^2+2k)^d \ll \V*{F_k}.
	\end{align*}
	It follows that
	\begin{align*}
		\gamma_1(\widetilde{F_k}) & \ll 4^d t^{-1}\rbras*{1+\tfrac{1}{k^2+2k}}^{2d},\\
		\gamma_2(\widetilde{F_k}) & \ll 4^d t^{-1} (t\delta^d)^{-2} \rbras*{\tfrac{k^2}{k^2+2k}}^{2d},\\
		\gamma_{3,N}(\widetilde{F_k}) & \ll t^{-\frac{1}{2}} \rbras*{1+\tfrac{1}{k^2+2k}}^{\frac{3d}{2}},
	\end{align*}
	and further
	\begin{align*}
		\sqrt{\gamma_1} + \sqrt{\gamma_2} + \gamma_{3,N}
		& \ll
		2^d t^{-\frac{1}{2}} \rbras*{1+\tfrac{1}{k^2+2k}}^d + t^{-\frac{1}{2}} \rbras*{1+\tfrac{1}{k^2+2k}}^{\frac{3d}{2}}\\
		& \ll
		2^d t^{-\frac{1}{2}} \rbras*{1+\tfrac{1}{k^2+2k}}^d.
	\end{align*}
	Using the Malliavin-Stein bound \eqref{eq:malliavin-gauss-limit} for $\dist_W$ completes the proof.
\end{proof}

\begin{proof}[Proof of Theorem \ref{thm:poisson-limit}]
	Assume, that the expectation of $F_k$ converges to a positive constant \eqref{eq:phase:2}, i.e.
	\begin{align*}
		\lim\limits_{d \rightarrow \infty}\frac{1}{(k+1)!} t(t\delta^d)^k (k+1)^d = \theta \in (0,\infty).
	\end{align*}
	Lemma \ref{lem:variance:limit} yields $\V*{F_k} \rightarrow \theta$ and Lemma \ref{lem:domD} gives $F_k \in \dom{D}$. This enables us, to use the Malliavin-Stein method to derive a bound on the total-variation distance between $F_k$ and a Poisson-distributed random variable $\cP(\theta)$ from Theorem \ref{thm:malliavin-poisson-limit}. 
	We note that our assumption implies $(t\delta^d) \rightarrow 0$ and $t(t\delta^d)^k(k+1)^d \rightarrow (k+1)!\theta$.
	
	Therefore 
	\begin{align*}
		\gamma_1(F_k) & \ll 4^d (t\delta^d)^{\frac{3}{2}}(k+1)^{3d},\\
		\gamma_2(F_k) & \ll 4^d (t\delta^d) k^{3d} \rbras*{\tfrac{k}{k+1}}^d,\\
		\gamma_{3,P}(F_k) & \ll (t\delta^d)^{\frac{1}{2}}(k+1)^{2d},
	\end{align*}
	and further
	\begin{align*}
		\sqrt{\gamma_1} + \sqrt{\gamma_2} + \gamma_{3,P}
		\ll 
		\begin{cases}
			(t\delta^d)^{\frac{1}{2}} 2^d (k+1)^{\frac{3d}{2}}, & k \leq 3,\\
			(t\delta^d)^{\frac{1}{2}} (k+1)^{2d}, & k \geq 3.
		\end{cases}
	\end{align*}
	It follows from the Malliavin-stein bound \eqref{eq:malliavin-poisson-limit} for $\dist_{TV}$, that
	\begin{align*}
		\dist_{TV}\rbras*{F_k,\cP(\theta)} & \ll \abs*{\E*{F_k} - \theta} + \abs*{\V*{F_k} - \theta} + 
		\begin{cases}
			(t\delta^d)^{\frac{1}{2}} 2^d (k+1)^{\frac{3d}{2}}, & k \leq 3,\\
			(t\delta^d)^{\frac{1}{2}} (k+1)^{2d}, & k \geq 3.
		\end{cases}\\
		& \ll \abs*{\E*{F_k} - \theta} + \abs*{\V*{F_k} - \theta} + 
		\begin{cases}
			t^{-\frac{1}{2k}} (k+1)^{\frac{d(3k-1)}{2k}} 2^{d}, & k \leq 3,\\
			t^{-\frac{1}{2k}} (k+1)^{\frac{d(4k-1)}{2k}}, & k \geq 3.
		\end{cases}
	\end{align*}
	which yields our claim.
\end{proof}

\printbibliography

\listoffixmes

\end{document}